\titlespacing{\section}{0cm}{3.5pc}{1.5pc}
\def\@citex[#1]#2{\if@filesw\immediate\write\@auxout{\string\citation{#2}}\fi
  \def\@citea{}\@cite{\@for\@citeb:=#2\do
    {\@citea\def\@citea{\@citesep}\@ifundefined
       {b@\@citeb}{{\bf ?}\@warning
       {Citation `\@citeb' on page \thepage \space undefined}}%
{\csname b@\@citeb\endcsname}}}{#1}}
\def\@citesep{; }
\newtheoremstyle{Kang}{}{}{\itshape}{}{\bf}{}{.5em}{}
\theoremstyle{Kang}
\newtheorem{theorem}{Theorem}[section]
\newtheorem{lemma}[theorem]{Lemma}
\newtheorem{thm}{Theorem}[section]
\newtheoremstyle{Kremark}{}{}{}{}{\bf}{}{.5em}{}
\theoremstyle{Kremark}
\newtheorem*{remark}{Remark.}
\newtheorem{defn}[theorem]{Definition}
\newtheorem{example}[theorem]{Example}
\newtheorem{other}{}
\def\fn#1{\operatorname{#1}} 
\def\bm#1{\mathbbm{#1}}
\def\c#1{\mathcal{#1}}
\def\Ring{\underline{\text{Ring}}}
\def\Grp{{\underline{\text{Grp}\mkern-1mu}\mkern1.5mu}_{\pi}}
\def\sto{\mapstochar\rightsquigarrow}
\DeclareMathOperator{\spec}{spec}
\title{Cartan Maps and Projective Modules}
\author{\begin{minipage}{0.4\textwidth}
Ming-chang Kang \\[2mm] \normalsize
Department of Mathematics \\
National Taiwan University \\
Taipei, Taiwan \\
E-mail: kang@math.ntu.edu.tw
\end{minipage}
and ~~
\begin{minipage}{0.4\textwidth}
Guangjun Zhu \\[2mm] \normalsize
School of Mathematical Sciences \\
Soochow University \\
Suzhou, China \\
E-mail: zhuguangjun@suda.edu.cn
\end{minipage} }
\date{}
\begin{document}

\maketitle

\footnote{\textit{\!\!\! $2010$ Mathematics Subject Classification}. 20C05, 16S34, 18G05, 20C12.}
\footnote{\textit{\!\!\! Keywords and phrases}. Cartan maps of left artinian rings, projective modules of finite groups, group rings, the Grothendieck groups.}
\footnote{ The second-named  author is supported by the National Natural Science Foundation of China (11271275) and by Foundation
of Jiangsu Overseas Research \&  Training Program for University Prominent Young \&  Middle-aged
Teachers and Presidents.}

\begin{abstract}
{\noindent\bf Abstract.} Let $R$ be a commutative ring, $\pi$ be a finite group, $R\pi$ be the group ring of $\pi$ over $R$. Theorem 1. If $R$ is a commutative artinian ring and $\pi$ is a finite group.
Then the Cartan map $c:K_0(R\pi)\to G_0(R\pi)$ is injective. Theorem 2. Suppose that $R$ is a Dedekind domain with $\fn{char}R=p>0$ and $\pi$ is a $p$-group. Then every finitely generated projective $R\pi$-module is isomorphic to $F \oplus \c{A}$ where $F$ is a free module and $\c{A}$ is a projective ideal of $R\pi$. Moreover, $R$ is a principal ideal domain if and only if every finitely generated projective $R\pi$-module is isomorphic to a free module. Theorem 3. Let $R$ be a commutative noetherian ring with total quotient ring $K$, $A$ be an $R$-algebra which is a finitely generated $R$-projective module. Suppose that $I$ is an ideal of $R$ such that $R/I$ is artinian.
Let $\{\c{M}_1,\ldots,\c{M}_n\}$ be the set of all maximal ideals of $R$ containing $I$.
Assume that the Cartan map $c_i: K_0(A/\c{M}_iA)\to G_0(A/\c{M}_iA)$ is injective for all $1\le i\le n$.
If $P$ and $Q$ are finitely generated $A$-projective modules with $KP\simeq KQ$, then $P/IP\simeq Q/IQ$.
\end{abstract}

\newpage
\section{Introduction}

Throughout this note, $R\pi$ denotes the group ring where $\pi$ is a finite group and $R$ is a commutative ring; all the modules we consider are left modules.
The present article arose from an attempt to understand the following theorem of Swan.

\begin{theorem}[Swan \cite{Sw1}] \label{t1.1}
Let $R$ be a Dedekind domain with quotient field $K$ and $\pi$ be a finite group.
Assume that $\fn{char} R=0$ and no prime divisor of $|\pi|$ is a unit in $R$.
If $P$ is a finitely generated projective $R\pi$-module,
then $K\otimes_R P$ is a free $K\pi$-module and $P$ is isomorphic to $F\oplus \c{A}$ where $F$ is a free $R\pi$-module
and $\c{A}$ is a left ideal of $R\pi$.
Moreover, for any non-zero ideal $I$ of $R$, we may choose $\c{A}$ such that $I+(R\cap \c{A})=R$.
\end{theorem}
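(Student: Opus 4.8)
\emph{Overview.} I would split the proof into two parts: first, that $K\otimes_R P$ is a free $K\pi$-module; then, granting this, the decomposition $P\simeq F\oplus\c{A}$ together with the coprimality of $\c{A}$ with $I$. The first part is the crux; the second follows from the standard theory of projective lattices over an $R$-order. We may assume $P\ne 0$.

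\emph{Freeness of $K\otimes_R P$.} Since $\fn{char}K=0$, the algebra $K\pi$ is semisimple, and a finite-dimensional $K\pi$-module is free exactly when its character is a non-negative integral multiple of the regular character; so it suffices to prove that the character $\chi$ of $V:=K\otimes_R P$ vanishes on every $g\ne 1$. Fix such a $g$ and a prime $p$ dividing the order of $g$, so that $p$ divides $|\pi|$; by hypothesis $p$ is a non-unit in $R$, hence lies in a maximal ideal $\c{M}$ whose residue field $k=R/\c{M}$ has characteristic $p$. Passing to the completion $\c{O}=\widehat{R}_{\c{M}}$, a complete discrete valuation ring with residue field $k$ and fraction field $\widehat{K}_{\c{M}}$ of characteristic $0$, the module $Q:=\c{O}\otimes_R P$ is finitely generated projective over $\c{O}\pi$ and $\widehat{K}_{\c{M}}\otimes_{\c{O}}Q$ still has character $\chi$. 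I would then invoke the classical fact that, over such an $\c{O}$, the character of $\widehat{K}_{\c{M}}\otimes_{\c{O}}Q$ vanishes on the $p$-singular classes for every finitely generated projective $\c{O}\pi$-module $Q$; this gives $\chi(g)=0$. (That fact can be proved by restricting to $\langle g\rangle$, where projectivity survives as $\c{O}\pi$ is free over $\c{O}\langle g\rangle$, and using the structure of $\c{O}\langle g\rangle$: writing $\langle g\rangle=S\times U$ with $S\ne 1$ its $p$-part, the algebra $\c{O}U$ has separable residue algebra and so decomposes as a product of complete discrete valuation rings $\c{O}_j$, each $\c{O}_jS$ is local, whence $Q$ is free over every $\c{O}_jS$, and on such a free module $g$ acts as an $\c{O}_j$-scalar times translation by the non-identity element of $S$, a map of trace zero.) With $\chi$ vanishing off the identity, the usual multiplicity computation forces $\chi$ to be $r$ times the regular character with $r:=\dim_K V/|\pi|$ automatically a non-negative integer, so $V\simeq(K\pi)^r$. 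Both hypotheses are used here essentially: $\fn{char}R=0$ makes $K\pi$ semisimple, while the non-invertibility of the prime divisors of $|\pi|$ is exactly what supplies, for each such prime $p$, a characteristic-$p$ residue field along which $P$ reduces to a projective module.

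\emph{The decomposition.} Set $\Lambda=R\pi$; it is Noetherian and module-finite over the Dedekind domain $R$, so it has Krull dimension one. I would first upgrade the preceding step to the statement that $P$ is a locally free $\Lambda$-lattice of rank $r$, i.e. $P_{\c{M}}\simeq\Lambda_{\c{M}}^{\,r}$ for every maximal ideal $\c{M}$ of $R$. For this, complete at $\c{M}$: idempotents lift along $\widehat{\Lambda}_{\c{M}}\twoheadrightarrow k\pi$, so the reduction map $K_0(\widehat{\Lambda}_{\c{M}})\to K_0(k\pi)$ is an isomorphism, and the composite of its inverse with the generic-fibre map $K_0(\widehat{\Lambda}_{\c{M}})\to G_0(\widehat{K}_{\c{M}}\pi)$ is a map $e\colon K_0(k\pi)\to G_0(\widehat{K}_{\c{M}}\pi)$ with $d\circ e=c$, where $d$ is the decomposition map and $c\colon K_0(k\pi)\to G_0(k\pi)$ is the Cartan map; by the case $R=k$ of Theorem~1, $c$ is injective, hence so is $e$. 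Since $\widehat{K}_{\c{M}}\otimes_{\c{O}}\widehat{P}_{\c{M}}$ is free of rank $r$ (because $K\otimes_R P$ is), injectivity of $e$ yields $[\widehat{P}_{\c{M}}]=r\,[\widehat{\Lambda}_{\c{M}}]$ in $K_0(\widehat{\Lambda}_{\c{M}})$, whence $\widehat{P}_{\c{M}}\simeq\widehat{\Lambda}_{\c{M}}^{\,r}$ as $\widehat{\Lambda}_{\c{M}}$ is semiperfect, and finally $P_{\c{M}}\simeq\Lambda_{\c{M}}^{\,r}$ because isomorphisms of $\Lambda_{\c{M}}$-lattices are detected after completion. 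Now Bass's cancellation theorem for orders of Krull dimension one lets me peel off a free direct summand $\Lambda$ as long as the rank still exceeds one, producing $P\simeq\Lambda^{\,r-1}\oplus\c{A}$ with $\c{A}$ locally free of rank one. As $\c{A}$ is $R$-torsion-free it embeds into $K\otimes_R\c{A}\simeq K\pi$, and rescaling its image by a suitable non-zero element of $R$ realizes $\c{A}$ as a left ideal of $\Lambda$. This proves the first assertion.

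\emph{Coprimality.} Being locally free of rank one, $\c{A}$ lies in the genus of $\Lambda$. By Roiter's replacement lemma, applied to the full $\Lambda$-lattices $\c{A}$ and $\Lambda$ inside $K\pi$ and to the ideal $I$, the realization of $\c{A}$ as a left ideal can be chosen so that $\Lambda/\c{A}$ is annihilated by an ideal of $R$ coprime to $I$; equivalently, $\c{A}_{\c{M}}=\Lambda_{\c{M}}$ for every maximal ideal $\c{M}$ containing $I$. For such $\c{M}$ we then get $(R\cap\c{A})_{\c{M}}=R_{\c{M}}\cap\Lambda_{\c{M}}=R_{\c{M}}$, so $R\cap\c{A}$ lies in no maximal ideal containing $I$, i.e. $I+(R\cap\c{A})=R$. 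The step I expect to be hardest is the first: for every prime divisor $p$ of $|\pi|$, pinning down the right characteristic-$p$ residue field and wringing the vanishing of $\chi$ on the $p$-singular classes out of the projectivity of the reduction of $P$ there.
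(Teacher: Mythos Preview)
The paper does not give its own proof of Theorem~\ref{t1.1}: it is quoted as Swan's result and serves as motivation. What the paper does is record Bass's reformulation (Theorem~\ref{t1.2}), which, granted that $KP$ is $K\pi$-free and that the Cartan maps $K_0((R/\c{M})\pi)\to G_0((R/\c{M})\pi)$ are injective, yields the splitting $P\simeq F\oplus\c{A}$ directly; the Cartan hypothesis is supplied by Brauer--Nesbitt (Theorem~\ref{t1.3}). The coprimality clause is treated in the paper only for the characteristic-$p$ analog (Theorem~\ref{t3.2}), via exactly the genus/Roiter argument you use.

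Your argument is sound and matches this route closely. Your decomposition step is precisely Bass's mechanism: you verify local freeness via the $cde$-triangle and Cartan injectivity, then peel off free summands by stable cancellation --- this is the content of Theorem~\ref{t1.2} as the paper unwinds it in Example~\ref{e4.4} and Example~\ref{e3.6}. Your coprimality step is identical to the one the paper gives in the proof of Theorem~\ref{t3.2}. The one ingredient the paper nowhere spells out is the freeness of $KP$ in characteristic zero; your character-theoretic argument (vanishing of $\chi$ on $p$-singular classes for each prime $p\mid|\pi|$, using a residue field of characteristic $p$ guaranteed by the non-invertibility hypothesis) is Swan's original approach from \cite{Sw1} and is correct. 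One minor wording slip: in your parenthetical sketch, ``translation by the non-identity element of $S$'' should read ``translation by the $p$-part of $g$, a non-identity element of $S$'', since $S$ may well have order $>p$.
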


Several alternative approaches to the proof of some parts of Theorem \ref{t1.1} were proposed;
see, for examples, \cite{Ba}, \cite{Gi}, \cite{Ri2}, \cite{Ha}, \cite[page 20]{Gr}, \cite[page 57, Theorem 4.2]{Sw3};
also see \cite[page 171, Theorem 11.2]{Sw2}.
Using the injectivity of the Cartan map (see Definition \ref{d2.4}),
Bass recast a crucial step of the proof of Theorem \ref{t1.1} as follows.

\begin{theorem}[{Bass \cite[Theorem 1]{Ba}}] \label{t1.2}
Let $R$ be a commutative noetherian ring with total quotient ring $K$ and
denote by $m$-$\fn{spec}(R)$ the space of all the maximal ideals of $R$ (under Zariski topology) with $d=$ $\dim (m\text{-}\fn{spec}(R))$.
Let $A$ be an $R$-algebra which, as an $R$-module, is a finitely generated projective $R$-module.
Suppose that $P$ is a finitely generated projective $A$-module satisfying that
{\rm (i)} $K\otimes_R P$ is a free $KA$-module of rank $r$,
and {\rm (ii)} the Cartan map $c_{\c{M}}: K_0(A/\c{M}A)\to G_0(A/\c{M}A)$ is injective for any $\c{M} \in m$-$\fn{spec}(R)$.
Then $P$ is isomphic to $F \oplus Q$ where $F$ is a free $A$-module of rank $r'$ and
$Q/\c{M}Q$ is a $\fn{rank} d'$ free module over $A/\c{M}A$ for any $\c{M}\in m$-$\fn{spec}(R)$ with $d'=$ min$\{d, r \}$ and $r'=r-d'$.
\end{theorem}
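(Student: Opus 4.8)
The plan is to reduce the whole assertion to a single local statement — that $P/\c{M}P$ is \emph{free} of rank $r$ over $A/\c{M}A$ for every $\c{M}\in m$-$\fn{spec}(R)$ — and then to peel off free $A$-summands by a Serre-type argument controlled by $d$. Observe first that local freeness of rank $r$ is forced by the very shape of the conclusion: if $P\simeq F\oplus Q$ with $F$ free of rank $r'=r-d'$ and $Q/\c{M}Q$ free of rank $d'$, then $P/\c{M}P\simeq (A/\c{M}A)^{\,r'+d'}=(A/\c{M}A)^{\,r}$. So establishing this local freeness is the essential step, and hypothesis (ii) is exactly what powers it. Throughout, for a fixed maximal ideal $\c{M}$ write $k=R/\c{M}$; since $A$ is $R$-projective, $A/\c{M}A$ is a finite-dimensional $k$-algebra, hence artinian and semiperfect, so finitely generated projective $A/\c{M}A$-modules obey Krull--Schmidt and are classified by their classes in $K_0(A/\c{M}A)$.

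To compare $P$ with the free module $A^{r}$ (of rank $r$), note that $P$ is $R$-flat (being $A$-projective over the $R$-projective algebra $A$), hence $R$-torsion-free. Using (i), I would clear denominators in a $KA$-isomorphism $K\otimes_R P\simeq(KA)^{r}=K\otimes_R A^{r}$ to obtain an $A$-linear injection $f\colon P\hookrightarrow A^{r}$ whose cokernel $C$ satisfies $K\otimes_R C=0$; thus $0\to P\xrightarrow{f}A^{r}\to C\to 0$ is a length-one resolution of $C$ by $R$-flat modules. Applying $-\otimes_R k$ and using $R$-flatness of $P$ and $A^{r}$ gives the four-term exact sequence of $A/\c{M}A$-modules
\[
0\longrightarrow \fn{Tor}_1^{R}(C,k)\longrightarrow P/\c{M}P\longrightarrow (A/\c{M}A)^{r}\longrightarrow C/\c{M}C\longrightarrow 0,
\]
so in $G_0(A/\c{M}A)$ one has $[P/\c{M}P]=r\,[A/\c{M}A]+[\fn{Tor}_1^{R}(C,k)]-[C\otimes_R k]$.

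The crux is therefore the specialization lemma: for a finitely generated $A$-module $C$ with $K\otimes_R C=0$ and finite $R$-flat dimension, the alternating sum $\sum_i(-1)^i[\fn{Tor}_i^{R}(C,k)]$ vanishes in $G_0(A/\c{M}A)$; in the situation above this reads $[\fn{Tor}_1^{R}(C,k)]=[C\otimes_R k]$. I would prove it by d\'evissage on $\fn{Supp}_R C$. Since $C$ vanishes at every associated prime of $R$, prime avoidance yields a nonzerodivisor $a\in R$ with $aC=0$; when $a\notin\c{M}$ all the $\fn{Tor}_i$ vanish after localizing at $\c{M}$, and when $a\in\c{M}$ one filters $C$ and inducts on $\dim R/(a)$, reducing to cyclic subquotients on which the Euler class is computed directly. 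Granting the lemma, $[P/\c{M}P]=r\,[A/\c{M}A]$ in $G_0(A/\c{M}A)$. Now both sides are the images under the Cartan map $c_{\c{M}}$ of the $K_0$-classes of the projective $A/\c{M}A$-modules $P/\c{M}P$ and $(A/\c{M}A)^{r}$; hence the injectivity hypothesis (ii) promotes the equality to $[P/\c{M}P]=r\,[A/\c{M}A]$ in $K_0(A/\c{M}A)$, and Krull--Schmidt over the semiperfect ring $A/\c{M}A$ yields the desired isomorphism $P/\c{M}P\simeq (A/\c{M}A)^{r}$.

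With local freeness of rank $r$ secured at every $\c{M}$, I would conclude by the Serre splitting theorem for projective modules over a ring module-finite over a commutative noetherian ring of maximal-spectrum dimension $d$ (Bass). If $r\le d$, take $F=0$ and $Q=P$; then $d'=r$, $r'=0$, and $Q/\c{M}Q\simeq(A/\c{M}A)^{r}$ is free of rank $d'$, as required. If $r>d$, then each $P/\c{M}P$ has free rank $r>d$, so $P$ admits a free summand $A$; writing $P\simeq A\oplus P_1$, Krull--Schmidt gives $P_1/\c{M}P_1\simeq(A/\c{M}A)^{r-1}$ and $K\otimes_R P_1\simeq(KA)^{r-1}$, so $P_1$ satisfies the same hypotheses with $r$ lowered by one. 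Iterating $r-d$ times produces $P\simeq A^{\,r-d}\oplus Q$ with $Q/\c{M}Q\simeq(A/\c{M}A)^{d}$ for every $\c{M}$, which is the assertion with $d'=\min\{d,r\}=d$ and $r'=r-d$. The main obstacle is the specialization lemma together with its interface with (ii): everything rests on transporting the generic rank $r$ to the single identity $[P/\c{M}P]=r\,[A/\c{M}A]$ in $G_0(A/\c{M}A)$ and then invoking injectivity of the Cartan map to upgrade it to a genuine isomorphism, after which the splitting is a routine downward induction on $r$.
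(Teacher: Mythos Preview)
Your overall architecture is correct and coincides with the paper's own argument, which appears in Example~\ref{e4.4}: first establish that $P$ is locally free of rank $r$ (equivalently, $P/\c{M}P\simeq(A/\c{M}A)^{(r)}$ for every maximal ideal $\c{M}$), then peel off free summands via Bass's stable splitting theorem (Theorem~\ref{tB}) and induct on $r$. The splitting half of your proof is identical to the paper's.

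The difference lies in how the local freeness is obtained. The paper simply localizes at $\c{M}$, observes that $K_{\c{M}}\otimes P_{\c{M}}\simeq K_{\c{M}}\otimes A_{\c{M}}^{(r)}$, and then invokes Theorem~\ref{t3.7} (Bass's local criterion: over a local base ring, $KP\simeq KQ$ plus Cartan injectivity forces $P\simeq Q$) as a black box to conclude $P_{\c{M}}\simeq A_{\c{M}}^{(r)}$. You instead unpack the mechanism of Theorem~\ref{t3.7} by hand: embed $P\hookrightarrow A^{r}$ with torsion cokernel $C$, write down the four-term $\fn{Tor}$ sequence, and appeal to a ``specialization lemma'' asserting that the Euler characteristic $\sum_i(-1)^i[\fn{Tor}_i^R(C,k)]$ vanishes in $G_0(A/\c{M}A)$. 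That lemma is exactly the heart of the proof of Theorem~\ref{t3.7}, so what you have done is re-derive Theorem~\ref{t3.7} rather than quote it. Your d\'evissage sketch (kill $C$ by a nonzerodivisor $a$, then induct on $\dim R/(a)$) is pointed in the right direction but is not yet a proof: the subquotients in your filtration need not inherit finite $R$-flat dimension, and the inductive step on dimension is not made precise. Since the paper already isolates this work as Theorem~\ref{t3.7}, the cleaner move is to localize first and cite that result; your route buys no extra generality and leaves the hardest step as an exercise.
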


Note that the assumption about the Cartan map in Theorem \ref{t1.2} is valid when $A=R\pi$
where $\pi$ is a finite group, thanks to the following theorem of Brauer and Nesbitt.

\begin{theorem}[{Brauer and Nesbitt \cite[page 442]{BN1,BN2,Br,CR}}] \label{t1.3}
Let $k$ be a field, $\pi$ be a finite group.
Then the Cartan map $c:K_0 (k\pi)\to G_0 (k\pi)$ is injective.
\end{theorem}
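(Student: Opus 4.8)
First I would reduce the problem to the nonvanishing of the determinant of the Cartan matrix of $k\pi$. By a standard base-change argument we may assume $k$ is algebraically closed: the homomorphisms $K_0(k\pi)\to K_0(\bar k\pi)$ and $G_0(k\pi)\to G_0(\bar k\pi)$ commute with the Cartan maps (as $-\otimes_k\bar k$ is exact and preserves projectivity) and are injective (each is split, up to multiplication by a degree, by restriction of scalars along a suitable finite subextension). Then, by Krull--Schmidt, $K_0(k\pi)$ is free abelian on the classes $[\Phi_1],\dots,[\Phi_n]$ of the indecomposable projective $k\pi$-modules, and by Jordan--H\"older $G_0(k\pi)$ is free abelian on the classes $[S_1],\dots,[S_n]$ of the simple modules ($n$ being the number of isomorphism classes of simple $k\pi$-modules); in these bases the Cartan map is given by the Cartan matrix $C=(c_{ij})$ with $c_{ij}=[\Phi_i:S_j]$. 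So it suffices to prove $\det C\neq 0$.

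Next I would fix a $p$-modular system $(K,\mathcal{O},k)$, with $K$ of characteristic $0$ and large enough that $K\pi$ is split semisimple, and factor $c$ through the decomposition map. Concretely, let $e\colon K_0(k\pi)\to G_0(K\pi)=K_0(K\pi)$ send $[\Phi_i]$ to $[K\otimes_{\mathcal{O}}\widetilde\Phi_i]$, where $\widetilde\Phi_i$ is the projective $\mathcal{O}\pi$-lattice lifting $\Phi_i$ (unique, by the lifting of idempotents), and let $d\colon G_0(K\pi)\to G_0(k\pi)$ be the decomposition map $[V]\mapsto[k\otimes_{\mathcal{O}}L]$ for a full $\mathcal{O}\pi$-lattice $L\subseteq V$ (well defined on $G_0$ independently of $L$); then $c=d\circ e$. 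Equip $K_0(k\pi)\times G_0(k\pi)$ with the pairing $\langle[\Phi_i],[S_j]\rangle=\dim_k\operatorname{Hom}_{k\pi}(\Phi_i,S_j)=\delta_{ij}$ and $G_0(K\pi)\times G_0(K\pi)$ with $\langle[V_i],[V_j]\rangle=\dim_K\operatorname{Hom}_{K\pi}(V_i,V_j)=\delta_{ij}$; both are perfect, and the second is positive definite over $\mathbb{R}$. Since $\widetilde\Phi_i$ is $\mathcal{O}\pi$-projective, $\operatorname{Hom}_{\mathcal{O}\pi}(\widetilde\Phi_i,L)$ is $\mathcal{O}$-free of finite rank and its formation commutes with $-\otimes_{\mathcal{O}}k$ and $-\otimes_{\mathcal{O}}K$; comparing ranks yields the adjunction $\langle e[\Phi_i],[V]\rangle=\langle[\Phi_i],d[V]\rangle$.

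The rest is formal once one knows that $d$ is surjective, at least rationally. Indeed, after $-\otimes_{\mathbb{Z}}\mathbb{C}$ the Brauer character map identifies $G_0(k\pi)\otimes\mathbb{C}$ with the space of class functions on the $p$-regular elements of $\pi$ (this is where Brauer's theorems enter: the simple $k\pi$-modules are in bijection with the $p$-regular conjugacy classes, and their Brauer characters are linearly independent), and under this identification the classes $d[V_j]$ become the restrictions of the ordinary irreducible characters, which span. Then the adjunction together with perfectness of $\langle\,,\,\rangle$ on $K_0(k\pi)\otimes\mathbb{Q}$ shows that $e$ is rationally injective, hence injective since $K_0(k\pi)$ is free. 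Consequently $e[\Phi_1],\dots,e[\Phi_n]$ are linearly independent in $G_0(K\pi)\otimes\mathbb{R}$, so their Gram matrix for the positive definite form $\langle\,,\,\rangle$ is nonsingular; but $\langle e[\Phi_i],e[\Phi_j]\rangle=\langle[\Phi_i],de[\Phi_j]\rangle=\langle[\Phi_i],c[\Phi_j]\rangle=c_{ji}$, so this Gram matrix is the transpose of $C$. Hence $\det C\neq 0$ and $c$ is injective.

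The step I expect to be the real obstacle is the surjectivity of $d$, equivalently the identification of $G_0(k\pi)\otimes\mathbb{C}$ with the $p$-regular class functions: this is the arithmetic core of Brauer's modular representation theory and is not formal. The only other nontrivial point is the adjunction between $d$ and $e$, which however reduces cleanly to the lemma that $\operatorname{Hom}$ out of a projective $\mathcal{O}\pi$-lattice commutes with reduction modulo the maximal ideal of $\mathcal{O}$. (Alternatively, the $p$-modular system yields $C=D^{\mathsf t}D$ for the decomposition matrix $D$, and one concludes from $D$ having full column rank; but this rests on the same Brauer-character input.)
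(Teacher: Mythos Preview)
Your argument is correct and is essentially the classical Brauer--Nesbitt proof: pass to a $p$-modular system, factor $c=d\circ e$, use Brauer reciprocity (the adjunction between $e$ and $d$) together with rational surjectivity of the decomposition map to see that $e$ is injective, and then recognize the Cartan matrix as the Gram matrix of the independent vectors $e[\Phi_i]$ for a positive definite form. This is the route taken in the cited references.

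The paper itself does not give a separate proof of Theorem~\ref{t1.3}; instead it proves the generalization Theorem~\ref{t1.5} (artinian base ring in place of a field) by an entirely different method, following Lam. There one works with Frobenius functors and Frobenius modules: Artin's induction theorem forces $\fn{Ker}(c_\pi)$ to be torsion once $c_{\pi'}$ is injective for every cyclic subgroup $\pi'$, and since $K_0(R\pi)$ is free abelian this kernel must vanish. The cyclic case is then handled by a direct computation showing that the Cartan matrix is diagonal with positive diagonal entries. Your approach yields the sharper conclusion $C=D^{\mathsf t}D$ (hence $C$ is symmetric positive definite), but it depends on lifting to characteristic zero and on the Brauer-character identification of $G_0(k\pi)\otimes\mathbb{C}$ with $p$-regular class functions, neither of which is available over a general commutative artinian ring. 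The paper's induction-theoretic argument trades away that extra information in exchange for avoiding characteristic-zero lifts altogether, which is precisely what allows the extension to Theorem~\ref{t1.5}.
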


It is known that the Cartan map $c:K_0(A)\to G_0(A)$ is an isomorphism if the (left) global dimension of $A$ is finite \cite[Proposition 21; Sw2, page 104, Corollary 4.7]{Ei}. However, it is possible that the global dimension of $A$ is infinite while the Cartan map is injective. By Lemma \ref{l2.15} the global dimension of the group ring $k\pi$ ($k$ is a field) is infinite if $\fn{char}k=p > 0$ and $p \mid |\pi|$. Thus Theorem \ref{t1.3} provides plenty of such examples. For examples other than the group rings, see \cite[Section 5]{EIN}, \cite[Example 5.76]{La3}, \cite{BFVZ} and also \cite[Theorem 2.4; St]{La1}.

In this article we will prove the following result which generalizes Theorem \ref{t1.3}.

\begin{theorem} \label{t1.5}
Let $R$ be a commutative artinian ring and $\pi$ be a finite group.
Then the Cartan map $c:K_0(R\pi)\to G_0(R\pi)$ is injective.
\end{theorem}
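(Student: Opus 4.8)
The plan is to reduce Theorem~\ref{t1.5} to the Brauer--Nesbitt theorem (Theorem~\ref{t1.3}) in two steps: first to the case where $R$ is a \emph{local} artinian ring, and then, by an $\mathfrak{m}$-adic filtration argument, down to its residue field.

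Recall the standard description of the Cartan map of a left artinian ring $A$: if $S_1,\dots,S_s$ is a full list of the simple $A$-modules and $P_1,\dots,P_s$ are their projective covers, then $K_0(A)$ is free abelian on $[P_1],\dots,[P_s]$, $G_0(A)$ is free abelian on $[S_1],\dots,[S_s]$, and $c$ is given by the Cartan matrix $C=(c_{ij})$, where $c_{ij}$ is the multiplicity of $S_j$ among the composition factors of $P_i$; hence $c$ is injective precisely when $\det C\neq 0$. Since a commutative artinian ring is a finite product $R=R_1\times\cdots\times R_m$ of local artinian rings, we have $R\pi\cong R_1\pi\times\cdots\times R_m\pi$, and $K_0$, $G_0$ and the Cartan map all decompose as direct sums over the factors; so we may assume that $R$ is local artinian, with maximal ideal $\mathfrak{m}$ and residue field $k=R/\mathfrak{m}$.

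As $\mathfrak{m}$ is nilpotent, $\mathfrak{m}R\pi$ is a nilpotent ideal of $R\pi$, hence $\mathfrak{m}R\pi\subseteq J(R\pi)$; since $R\pi/\mathfrak{m}R\pi\cong k\pi$, it follows that $J(R\pi)$ is the preimage of $J(k\pi)$ and $R\pi/J(R\pi)\cong k\pi/J(k\pi)$. Thus $R\pi$ and $k\pi$ have the same simple modules $S_1,\dots,S_s$, and reduction modulo $\mathfrak{m}$ carries the indecomposable projective $R\pi$-module $\widetilde{P}_i$ (the projective cover of $S_i$) to a projective $k\pi$-module with simple top $S_i$, namely the indecomposable projective $P_i$. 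It remains to compute the Cartan matrix $\widetilde{C}$ of $R\pi$. Being a finitely generated projective $R\pi$-module, $\widetilde{P}_i$ is a direct summand of a free $R$-module, hence $R$-free; therefore $\mathfrak{m}^j\widetilde{P}_i\cong\mathfrak{m}^j\otimes_R\widetilde{P}_i$ for all $j$, and since $\mathfrak{m}^j/\mathfrak{m}^{j+1}$ is a $k$-vector space on which $\pi$ acts trivially (it is central in $R\pi$), there are isomorphisms of $k\pi$-modules
\[
\mathfrak{m}^j\widetilde{P}_i\big/\mathfrak{m}^{j+1}\widetilde{P}_i\;\cong\;(\mathfrak{m}^j/\mathfrak{m}^{j+1})\otimes_k P_i\;\cong\;P_i^{\,n_j},\qquad n_j:=\dim_k(\mathfrak{m}^j/\mathfrak{m}^{j+1}).
\]
Reading off the $\mathfrak{m}$-adic filtration of $\widetilde{P}_i$ gives, in $G_0(R\pi)=G_0(k\pi)$, the identity $[\widetilde{P}_i]=(\sum_j n_j)[P_i]=\ell\,[P_i]$, where $\ell=\sum_j n_j$ is the composition length of $R$. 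Hence $\widetilde{C}=\ell\cdot C$, with $C$ the Cartan matrix of $k\pi$, so $\det\widetilde{C}=\ell^{\,s}\det C\neq 0$ because $\ell\geq 1$ and $\det C\neq 0$ by Theorem~\ref{t1.3}. Therefore $c\colon K_0(R\pi)\to G_0(R\pi)$ is injective.

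The step I expect to need the most care is the identification of the $k\pi$-module structure on the graded pieces $\mathfrak{m}^j\widetilde{P}_i/\mathfrak{m}^{j+1}\widetilde{P}_i$: this uses the $R$-flatness of $\widetilde{P}_i$ to interchange $\mathfrak{m}^j\otimes_R(-)$ with the inclusion into $\widetilde{P}_i$, and the centrality of $\mathfrak{m}$ in $R\pi$ to see that $\pi$ acts only on the $\widetilde{P}_i$-tensor-factor. The remaining ingredients---the product decomposition of $R$, the radical computation, and the equivalence ``$c$ injective $\iff\det C\neq0$''---are formal.
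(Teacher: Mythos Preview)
Your argument is correct, but it follows a genuinely different route from the paper's own proof of Theorem~\ref{t1.5}. The paper proceeds via Frobenius functors and Artin's induction theorem: it shows that the kernel $M(\pi')=\ker c_{\pi'}$ is a Frobenius module, uses Artin induction to conclude that $|\pi|^2\cdot M(\pi)=0$ once $M(\pi')=0$ for all cyclic $\pi'$, and then handles the cyclic case directly by exhibiting the Cartan matrix as diagonal (using that $R\pi$ is commutative when $\pi$ is cyclic). Your approach instead reduces immediately to local artinian $R$ by the product decomposition, and then uses the $\mathfrak m$-adic filtration of each $\widetilde P_i$ together with its $R$-freeness to obtain $\widetilde C=\ell\cdot C$, whence injectivity follows from Brauer--Nesbitt (Theorem~\ref{t1.3}) over the residue field. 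This is precisely the mechanism of Theorem~\ref{t4.9} (attributed there to S.~Endo), which the paper presents later as an alternative, more general route to Theorem~\ref{t1.5}. The trade-off is clear: the paper's proof exhibits the Frobenius-module machinery and makes the cyclic case self-contained without invoking Theorem~\ref{t1.3}, while your argument is shorter, avoids induction theorems entirely, and in fact works for any $R$-algebra $A$ that is finitely generated and free as an $R$-module, not just $R\pi$.
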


The main idea of the proof of Theorem \ref{t1.5} is to use the Frobenius functors as in Lam's paper \cite{La1}. For a generalization of this theorem, see Theorem \ref{t4.9}.

We will also study a variant of Theorem \ref{t1.1}, i.e. finitely generated $R\pi$-projective modules where $R$ is a Dedekind domain with $\fn{char}R=p>0$. One of our results is the following (see Theorem \ref{t3.1} and Theorem \ref{t3.2}).

\begin{theorem} \label{t1.6}
Let $R$ be a Dedekind domain with quotient field $K$ such that $\fn{char}R=p>0$. Let $\pi$ be a finite group with $p\mid |\pi|$, and $\pi_p$ be a $p$-Sylow subgroup of $\pi$.
\leftmargini=8mm
\begin{enumerate}
\item[{\rm (1)}]
Let $M$ be a finitely generated $R\pi$-module. Then \par
\begin{tabular}{r@{~}p{13cm}}
& $M$ is a projective $R\pi$-module, \\
$\Leftrightarrow$ & The restriction of $M$ to $R\pi_p$ is a projective $R\pi_p$-module, \\
$\Leftrightarrow$ & The restriction of $M$ to $R\pi'$ is a projective $R\pi'$-module where $\pi'$ is any elementary abelian subgroup of $\pi_p$.
\end{tabular}
\item[{\rm (2)}]
If $\pi$ is a $p$-group and $P$ is a finitely generated projective $R\pi$-module, then $K\otimes_R P$ is a free $K\pi$-module and $P$ is isomorphic to $F \oplus \c{A}$ where $F$ is a free module and $\c{A}$ is a projective ideal of $R\pi$. Moreover, for any non-zero ideal $I$ of $R$, we may choose $\c{A}$ such that $I+(R\cap \c{A})=R$.

\end{enumerate}
\end{theorem}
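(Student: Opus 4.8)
\medskip\noindent\textbf{Proof proposal.}

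For part (1), the plan is to treat the two equivalences in turn. Since $\fn{char}R=p$ and $[\pi:\pi_p]$ is prime to $p$, the integer $[\pi:\pi_p]$ is a unit of $R$ (because $\mathbbm{F}_p\subseteq R$), so the usual averaging argument works: the natural $R\pi$-surjection $R\pi\otimes_{R\pi_p}M\to M$, $g\otimes m\mapsto gm$, is split by $m\mapsto[\pi:\pi_p]^{-1}\sum_i g_i\otimes g_i^{-1}m$ (the sum over coset representatives of $\pi/\pi_p$), which one checks is a well-defined $R\pi$-homomorphism. Hence $M$ is an $R\pi$-summand of the induced module $R\pi\otimes_{R\pi_p}M$; combined with the fact that $R\pi$ is free as a right $R\pi_p$-module, this gives that $M$ is $R\pi$-projective $\iff$ the restriction of $M$ to $R\pi_p$ is $R\pi_p$-projective, and the same argument with $\pi_p,\pi'$ in place of $\pi,\pi_p$ yields the ``only if'' direction of the remaining equivalence. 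For the converse — if the restriction of $M$ to $R\pi'$ is projective for every elementary abelian $\pi'\le\pi_p$, then $M$ is $R\pi$-projective — I would reduce to $R\pi_p$ as above and then argue locally on $\fn{spec}R$: a finitely generated module over an $R$-algebra that is module-finite over $R$ is projective iff it becomes projective after localizing at each maximal ideal $\c{M}$, and since the hypothesis already forces $M$ to be $R$-flat (restrict further to $R$), over the local ring $R_{\c{M}}\pi_p$ — local because $(R/\c{M})\pi_p$ is local for a $p$-group in characteristic $p$ — projectivity is detected modulo $\c{M}$. The question thus reduces to the same one over the residue fields $R/\c{M}$, where it is Chouinard's theorem on detecting projectivity of group-algebra modules on elementary abelian subgroups; transporting ``projective mod $\c{M}$'' back to ``the restriction of $M$ to $R\pi'$ is $R\pi'$-projective'' completes part (1). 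The main work here is the local criterion comparing projectivity of $M$ over the order $R\pi_p$ with that of $M/\c{M}M$ over $(R/\c{M})\pi_p$.

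For part (2), first note that $K\pi$ is a finite-dimensional (hence artinian) \emph{local} $K$-algebra, since $\pi$ is a $p$-group and $\fn{char}K=p$ forces its augmentation ideal to be nilpotent; therefore the finitely generated projective $K\pi$-module $K\otimes_R P$ is free, say of rank $r$. As $R$ is a domain its total quotient ring is $K$, so I would apply Bass's Theorem \ref{t1.2} with $A=R\pi$: hypothesis (i) is the freeness just noted, and hypothesis (ii) holds because $R\pi/\c{M}R\pi=(R/\c{M})\pi$ with $R/\c{M}$ a field, so the Cartan map there is injective by Theorem \ref{t1.3}. Since $R$ is Dedekind, $d:=\dim(m\text{-}\fn{spec}(R))\le 1$; the case $d=0$ (so $R$ is a field, $P$ free, $\c{A}=R\pi$) being immediate, assume $d=1$ and $P\neq 0$. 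Then Theorem \ref{t1.2} gives $P\simeq F\oplus Q$ with $F$ free of rank $r-1$, $Q/\c{M}Q$ free of rank $1$ over $(R/\c{M})\pi$ for every maximal $\c{M}$, and $K\otimes_R Q\simeq K\pi$.

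It remains to realize $Q$ as a projective ideal of $R\pi$ and to arrange the coprimality with $I$. Because $Q$ is $R\pi$-projective it is $R$-torsion-free, so $Q\hookrightarrow K\otimes_R Q\simeq K\pi$ is injective; clearing a common denominator $s\in R\setminus\{0\}$ of the images of a finite generating set produces an $R\pi$-monomorphism $Q\to R\pi$ with image an ideal $\c{A}\simeq Q$, which is therefore projective and satisfies $P\simeq F\oplus\c{A}$. For the refinement, fix a non-zero ideal $I$ of $R$ with $\c{M}_1,\dots,\c{M}_n$ the maximal ideals containing it. Since $R/I$ is artinian and $\c{A}$ has local rank $1$ at each $\c{M}_j$, the module $\c{A}/I\c{A}$ is free of rank $1$ over $(R/I)\pi=R\pi/IR\pi$; moreover, viewed inside $K\pi$, each $\c{A}_{\c{M}}=R_{\c{M}}\otimes_R\c{A}$ is free of rank $1$ over the local ring $R_{\c{M}}\pi$ and equals $R_{\c{M}}\pi$ for all but finitely many $\c{M}$. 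I would then run the classical approximation argument: using the Chinese Remainder Theorem at the finitely many relevant maximal ideals, choose $\alpha\in K\pi$ with $\alpha\c{A}\subseteq R\pi$ and $\alpha\c{A}+IR\pi=R\pi$, and replace $\c{A}$ by the isomorphic ideal $\alpha\c{A}$; the induced surjection $\alpha\c{A}\to R\pi/IR\pi$ then forces $R\cap\alpha\c{A}$ to be coprime to $I$. This last step — transplanting the standard scaling trick for fractional ideals of a Dedekind domain to the order $R\pi\subseteq K\pi$ — is the principal obstacle; the rest is an assembly of Theorems \ref{t1.2} and \ref{t1.3} together with the observation that $K\pi$, $R_{\c{M}}\pi$ and $(R/\c{M})\pi$ are all local for a $p$-group in characteristic $p$.
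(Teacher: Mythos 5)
Your overall architecture matches the paper's: part (1) is the averaging/induction argument plus Chouinard's theorem, and part (2) is Bass's Theorem \ref{t1.2} fed by the Brauer--Nesbitt injectivity (Theorem \ref{t1.3}), followed by a scaling argument within the isomorphism class of $\c{A}$ for the coprimality refinement. You diverge in two places. First, for the elementary-abelian detection in part (1), the paper simply cites Chouinard's Corollary 1.1, which is stated over an \emph{arbitrary} commutative ring, so no reduction to residue fields is needed; your local criterion (projectivity of an $R$-flat finitely generated module over $R_{\c{M}}\pi_p$ is detected modulo $\c{M}$ via Nakayama) is correct, but its only payoff is to replace the general form of Chouinard's theorem by its field case. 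A small slip nearby: your phrase ``the same argument with $\pi_p,\pi'$ in place of $\pi,\pi_p$'' cannot refer to the averaging argument, since $[\pi_p:\pi']$ is a power of $p$ and is \emph{not} a unit in $R$; the only-if direction there is just restriction of a projective module along the free extension $R\pi'\subset R\pi_p$, which is all you need. Second, for the refinement $I+(R\cap\c{A})=R$, the paper first checks that $\c{A}$ and $R\pi$ lie in the same genus (Lemma \ref{l3.5} applied at each maximal ideal, using $\c{A}/\c{M}\c{A}\simeq R\pi/\c{M}R\pi$) and then invokes Roiter's replacement theorem to produce an exact sequence $0\to\c{A}\to R\pi\to X\to 0$ with $I+\fn{Ann}_RX=R$, identifying $R\cap\c{A}$ with $\fn{Ann}_R(R\pi/\c{A})$. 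Your CRT/approximation argument is essentially a re-derivation of Roiter's theorem in this special case; the idea is right, but it is precisely the step you leave as a sketch, and as written it carries a noncommutativity slip: to move the left ideal $\c{A}$ inside $K\pi$ by a left-$R\pi$-module isomorphism you must multiply on the \emph{right}, $\c{A}\mapsto\c{A}\alpha$, not on the left. Everything else (locality of $K\pi$ and freeness of $KP$, the application of Theorems \ref{t1.2} and \ref{t1.3}, realizing $Q$ as an ideal by clearing denominators) agrees with the paper's proof of Theorems \ref{t3.1} and \ref{t3.2}.
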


In the situation of Part (2) of the above theorem, we will show in Theorem \ref{t3.11} that $R$ is a principal ideal domain if and only if every finitely generated $R\pi$-projective module is free. For more cases, see Lemma \ref{l3.12}, Lemma \ref{l4.5} and Lemma \ref{l4.6}.

\bigskip
Terminology and notations.
For the sake of brevity, a projective module over a ring $A$ will be called an $A$-projective module (or simply  $A$-projective).
A projective ideal $\c{A}$ of $A$ is a left ideal of the ring $A$ such that $\c{A}$ is $A$-projective.
An $A$-module $M$ is called indecomposable if $M\simeq M_1\oplus M_2$ implies either $M_1=0$ or $M_2=0$;
similarly for indecomposable projective modules.
If $A$ is a ring we will denote by $\fn{rad}(A)$ the Jacobson radical of $A$.
If $A$ is an $R$-algebra where $R$ is a commutative ring with total quotient $K$,
we denote $KA:=K\otimes_R A$, $KM:=K\otimes_R M$ if $M$ is an $A$-module; similarly, $R_{\mathcal{M}}$ denotes the localization of $R$ at the maximal ideal $\mathcal{M}$ and $M_{\mathcal{M}}:=R_{\mathcal{M}}\otimes_R M$ if $M$ is an $A$-module.

An $R\pi$-lattice $M$ is a finitely generated $R\pi$-module which is an $R$-projective module as an $R$-module (see Definition \ref{d2.6}). Two $R\pi$-lattices $M$ and $N$ belong to the same genus if $R_{\mathcal{M}}\otimes_R M$ is isomorphic to $R_{\mathcal{M}}\otimes_R N$ for any maximal ideal $\mathcal{M}$ of $R$ \cite[page 643]{CR}.

If $M$ is an $R\pi$-module and $\pi'$ is a subgroup of $\pi$, then we may regard $M$ as an $R\pi'$-module through the ring homomorphism $R\pi' \to R\pi$; such an $R\pi'$-module is called the restriction of $M$ to $R\pi'$ and is denoted by $M_{\pi'}$. On the other hand, if $N$ is an $R\pi'$-module and $\pi'$ is a subgroup of $\pi$, then the $R\pi$-module $R\pi \otimes_{R\pi'} N$ is called the induced module of $N$ and is denoted by $N^{\pi}$. For details, see \cite[page 228]{CR}.

We say that $R$ is a local ring or a semilocal ring if $R$ is a commutative noetherian ring with a unique or only finitely many maximal ideals; a local ring is denoted by ($R, \mathcal{M}$) where $\mathcal{M}$ is the maximal ideal of $R$. To avoid possible confusion we will not define the notion of a non-commutative semilocal ring \cite[page 170; La2, page 311]{Sw2}, because it appears only once in Example \ref{e3.6} of this article. A (possibly non-commutative) ring $R$ is called quasi-local if all the non-unit elements form a two-sided ideal \cite[page 77]{Sw2}. All the $A$-projective modules we consider are finitely generated, unless otherwise specified. If $M$ is an $A$-module, the direct sum of $n$ copies of it is denoted by $M^{(n)}$.

\bigskip
Acknowledgments. We thanks Prof. Shizuo Endo who kindly communicated to us the proof of Theorem \ref{t3.13} and Theorem \ref{t4.9}.

\section{The Cartan map}

Recall the definitions of the Grothendieck groups $K_0(A)$ and $G_0(A)$.
Let $A$ be a ring.
Then $K_0(A)$ is the abelian group defined by generators $[P]$ where $P$ is a finitely generated $A$-projective module,
with relations $[P]=[P']+[P'']$ whenever there is a short exact sequence of projective $A$-modules $0\to P'\to P\to P''\to 0$.
In a similar way, if $A$ is a left noetherian ring,
then $G_0(A)$ is the abelian group defined by generators $[M]$ where $M$ is a finitely generated $A$-module,
with relations $[M]=[M']+[M'']$ whenever an exact sequence $0\to M'\to M\to M''\to 0$ exists.
For details, see \cite[Chapter 1]{Sw3}.

\begin{defn}[{\cite[page 86]{Sw2}}] \label{d2.1}
Let $A$ be a ring, $I$ be a two-sided ideal of $A$.
We say that $A$ is $I$-complete if the natural map $A\to \varprojlim_{n\in \bm{N}} A/I^n$ is an isomorphism.
\end{defn}

\begin{lemma}[{\cite[page 89, Theorem 2.26]{Sw2}}] \label{l2.2}
If $I$ is a two-sided ideal of a ring $A$ such that $A$ is $I$-complete,
then there is a one-to-one correspondence between the isomorphism classes of finitely generated $A$-projective modules
and the isomorphism classes of finitely generated $A/I$-projective modules given by $P\sto P/IP$.
\end{lemma}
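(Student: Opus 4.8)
The plan is to realize the correspondence via lifting idempotents along the $I$-adic completion. First I would record two easy consequences of $I$-completeness. For $x\in I$ the partial sums of $1-x+x^{2}-\cdots$ form a Cauchy sequence for the $I$-adic filtration, hence converge to a two-sided inverse of $1+x$; so $1+I$ consists of units and therefore $I\subseteq\fn{rad}(A)$. Also $\bigcap_{n}I^{n}=0$. Next, $P\sto(A/I)\otimes_{A}P=P/IP$ is a functor from finitely generated $A$-projective modules to finitely generated $A/I$-projective modules, since base change along $A\to A/I$ preserves projectivity and finite generation (a summand of $A^{(n)}$ goes to a summand of $(A/I)^{(n)}$). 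So the assignment on isomorphism classes is well defined, and it remains to prove it is injective and surjective.

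For injectivity, let $P,Q$ be finitely generated $A$-projective with an isomorphism $\varphi_{0}\colon P/IP\to Q/IQ$. Since $P$ is projective, $\varphi_{0}$ lifts to an $A$-homomorphism $\varphi\colon P\to Q$. Then $\varphi(P)+IQ=Q$ with $Q$ finitely generated and $I\subseteq\fn{rad}(A)$, so $\varphi$ is onto by Nakayama's lemma, and projectivity of $Q$ splits the surjection: $P\simeq\ker\varphi\oplus Q'$ with $Q'\simeq Q$ and $\varphi$ restricting to an isomorphism on $Q'$. Reducing modulo $I$ and using that $\varphi_{0}$ is an isomorphism forces $\ker\varphi=I\ker\varphi$; as $\ker\varphi$ is a direct summand of the finitely generated module $P$, Nakayama gives $\ker\varphi=0$, hence $P\simeq Q$.

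For surjectivity — the substantive point — let $\bar P$ be a finitely generated $A/I$-projective module, so $\bar P$ is the image of an idempotent $\bar e\in M_{n}(A/I)=M_{n}(A)/M_{n}(I)$ for some $n$. I would first note that $M_{n}(A)$ is $M_{n}(I)$-complete: since $M_{n}(I)^{k}=M_{n}(I^{k})$ one has $\varprojlim_{k}M_{n}(A)/M_{n}(I)^{k}=\varprojlim_{k}M_{n}(A/I^{k})=M_{n}(\varprojlim_{k}A/I^{k})=M_{n}(A)$. Then I would lift $\bar e$ to an idempotent of $M_{n}(A)$ by the standard iteration: choose $a\in M_{n}(A)$ with image $\bar e$, so $t_{0}:=a^{2}-a\in M_{n}(I)$, and set $a_{0}=a$, $a_{k+1}=3a_{k}^{2}-2a_{k}^{3}$. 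The $a_{k}$ are polynomials in $a$, hence pairwise commuting, and from the polynomial identity $(3x^{2}-2x^{3})^{2}-(3x^{2}-2x^{3})=(x^{2}-x)^{2}\bigl(4(x^{2}-x)-3\bigr)$ one gets $a_{k}^{2}-a_{k}\in M_{n}(I^{2^{k}})$ and also $a_{k+1}-a_{k}\in M_{n}(I^{2^{k}})$; by completeness the sequence converges to an idempotent $e$ with $e\equiv\bar e$. (Alternatively, one can lift $\bar e$ one square-zero step at a time up the tower $\cdots\to A/I^{k+1}\to A/I^{k}\to\cdots$ and pass to the inverse limit.) Finally $P:=eA^{n}$ is finitely generated $A$-projective, and reducing the decomposition $A^{n}=eA^{n}\oplus(1-e)A^{n}$ modulo $I$ identifies $P/IP$ with $\bar e(A/I)^{n}\simeq\bar P$, giving the required preimage. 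The only real obstacle is this last step — the idempotent-lifting lemma together with the remark that matrix rings over an $I$-complete ring are again complete; everything else is Nakayama's lemma applied to finitely generated modules over a ring in which $I$ lies in the Jacobson radical.
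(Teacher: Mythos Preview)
Your argument is correct. The paper does not supply a proof of this lemma; it simply quotes it from Swan \cite[page 89, Theorem 2.26]{Sw2}, and the idempotent-lifting approach you give is exactly the one used there (the paper itself later invokes this lifting, citing \cite[page 86, Proposition 2.19]{Sw2}, in the proof of Lemma \ref{l2.3}). Your injectivity step is likewise the content of Lemma \ref{l3.5}, which the paper also states without proof. So there is nothing to compare: you have filled in the standard proof that the paper elected to cite rather than reproduce.
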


\begin{lemma} \label{l2.3}
Let $A$ be a left artinian ring with Jacobson radical $J$.
Then $K_0(A)$ and $G_0(A)$ are free abelian groups of the same rank.
In fact, it is possible to find finitely generated indecomposable projective $A$-modules $P_1,P_2,\ldots,P_n$ and
simple $A$-modules $M_1,\allowbreak M_2,\ldots,M_n$ such that,
for $1\le i\le n$, $M_i\simeq P_i/JP_i$ and $K_0(A)=\bigoplus_{1\le i\le n} \bm{Z}\cdot [P_i]$,
$G_0(A)=\bigoplus_{1\le i\le n}\bm{Z}\cdot [M_i]$.
Moreover, each $P_i$ is a left ideal generated by some idempotent element of $A$.
\end{lemma}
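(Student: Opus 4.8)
The plan is to assemble three classical ingredients: the structure theory of left artinian rings (idempotent lifting modulo a nilpotent radical), the Jordan--Hölder theorem for $G_0$, and the Krull--Schmidt theorem for $K_0$. First I would recall that, since $A$ is left artinian, $J=\fn{rad}(A)$ is nilpotent and $A/J$ is semisimple artinian; in particular idempotents lift modulo $J$. Writing $1=e_1+\cdots+e_m$ as a sum of pairwise orthogonal primitive idempotents of $A$, one gets $A=\bigoplus_{1\le j\le m}Ae_j$ as a left $A$-module, where each $Ae_j$ is an indecomposable projective, $Ae_j/Je_j$ is simple, and $Ae_i\simeq Ae_j$ as $A$-modules if and only if $Ae_i/Je_i\simeq Ae_j/Je_j$. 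Discarding repetitions yields indecomposable projectives $P_1,\dots,P_n$ (each of the form $Ae_i$, hence a left ideal generated by an idempotent, which gives the ``moreover'' clause) together with the simple modules $M_i=P_i/JP_i$. Since every simple $A$-module is a quotient of $A$, hence of some $Ae_j$, the list $M_1,\dots,M_n$ exhausts the isomorphism classes of simple $A$-modules, and $P_i\mapsto P_i/JP_i$ is a bijection between isomorphism classes of indecomposable projectives and of simple modules.

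For $G_0(A)$: every finitely generated $A$-module $M$ has a composition series because $A$ is left artinian, and by Jordan--Hölder the multiplicity $[M:M_i]$ of $M_i$ as a composition factor is well defined and additive on short exact sequences. Hence $[M]\mapsto([M:M_1],\dots,[M:M_n])$ induces a group homomorphism $G_0(A)\to\bm{Z}^{n}$. It is surjective since $[M_i]$ goes to the $i$-th standard basis vector, and injective since the classes $[M_i]$ generate $G_0(A)$ (each $[M]$ equals the sum, in $G_0$, of its composition factors). Therefore $G_0(A)=\bigoplus_{1\le i\le n}\bm{Z}\cdot[M_i]$ is free of rank $n$.

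For $K_0(A)$: by Krull--Schmidt, valid for finitely generated (hence finite length) modules over a left artinian ring, every finitely generated projective $A$-module $P$ decomposes, uniquely up to isomorphism and order, as $P\simeq\bigoplus_{1\le i\le n}P_i^{(m_i(P))}$, so the multiplicities $m_i(P)$ are well defined. A short exact sequence $0\to P'\to P\to P''\to 0$ of projectives splits, so $P\simeq P'\oplus P''$ and $m_i(P)=m_i(P')+m_i(P'')$; thus $[P]\mapsto(m_1(P),\dots,m_n(P))$ induces a homomorphism $K_0(A)\to\bm{Z}^{n}$ which, as above, is surjective (sending $[P_i]$ to the $i$-th basis vector) and injective (the $[P_i]$ generating $K_0(A)$). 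Hence $K_0(A)=\bigoplus_{1\le i\le n}\bm{Z}\cdot[P_i]$ is free of rank $n$, the same rank as $G_0(A)$.

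The only genuinely delicate point is the structure theory invoked in the first paragraph --- lifting idempotents modulo the nilpotent radical and the resulting bijection between indecomposable projectives and simple modules; this is entirely standard for left artinian rings and can be quoted from the literature (e.g.\ Curtis--Reiner or \cite[Chapter 1]{Sw3}). The remainder is a bookkeeping exercise with Jordan--Hölder and Krull--Schmidt, after which one also sees that the Cartan map $c$ is represented in these bases by the Cartan matrix $\bigl([P_i:M_j]\bigr)_{i,j}$, so its injectivity amounts to nonvanishing of that determinant.
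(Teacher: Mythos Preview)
Your proof is correct and follows essentially the same route as the paper: nilpotent radical, semisimple quotient $A/J$, idempotent lifting, and the bijection $P_i\leftrightarrow P_i/JP_i$ between indecomposable projectives and simples, with Krull--Schmidt and Jordan--H\"older supplying the bases for $K_0$ and $G_0$. The only cosmetic difference is that the paper packages the lifting step via Lemma~\ref{l2.2} (the $J$-completeness correspondence $P\mapsto P/JP$), whereas you decompose $1\in A$ directly into primitive idempotents; you are also more explicit than the paper in spelling out the multiplicity maps $G_0(A)\to\bm{Z}^n$ and $K_0(A)\to\bm{Z}^n$.
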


\begin{proof}
Since $J$ is a nilpotent ideal \cite[page 56]{La2}, $A$ is $J$-complete.
On the other hand, if $M$ is a simple $A$-module, then $J\cdot M=0$ \cite[page 54]{La2};
thus the family of simple $A$-modules is identical to that of simple $A/J$-modules.

Note that $A/J$ is left artinian and $\fn{rad}(A/J)=0$. It is semisimple by the Artin-Wedderburn Theorem \cite[page 57]{La2}. Since any $A/J$-module is projective \cite[page 29]{La2}, a simple $A/J$-module is an indecomposable $A/J$-projective module. If $Q$ is a finitely generated indecomposable $A/J$-projective module, then $Q \oplus Q' \simeq (A/J)^{(t)}$ for some module $Q'$ and some integer $t$. By the Krull-Schmidt-Azumaya Theorem \cite[page 128]{CR}, $Q$ is isomorphic to some minimal left ideal of $A/J$. It follows that every finitely generated indecomposable $A/J$-projective module is isomorphic to a minimal left ideal of $A/J$ (which is generated by some idempotent of $A/J$). Thus the family of simple $A/J$-modules is identical to that of finitely generated indecomposable $A/J$-projective modules.

Apply the correspondence of Lemma \ref{l2.2}. Since $A$ is $J$-complete, any idempotent in $A/J$ can be lifted to one in $A$ \cite[page 86, Proposition 2.19]{Sw2},
which gives rise to an indecomposable $A$-projective module.
\end{proof}

\begin{defn} \label{d2.4}
Let $A$ be a left artinian ring.
The Cartan map $c:K_0(A)\to G_0(A)$ is defined as follows.
For any finitely generated $A$-projective module $P$,
find a Jordan-H\"older composition series of $P:M_0=P\supset M_1\supset M_2\supset \cdots \supset M_t=\{0\}$,
where each $M_i/M_{i+1}$ is a simple $A$-module.
Define $c([P])=\sum_{0\le i\le t-1} [M_i/M_{i+1}]\in G_0(A)$.
It is easy to see that $c$ is a well-defined group homomorphism.

By Lemma \ref{l2.3}, write $K_0(A)=\bigoplus_{1\le i\le n} \bm{Z}\cdot [P_i]$,
$G_0(A)=\bigoplus_{1\le i\le n} \bm{Z}\cdot [M_i]$. If $c([P_i])=\sum_{1\le j\le n} a_{ij} [M_j]$ where $a_{ij}\in \bm{Z}$,
the matrix $(a_{ij})_{1\le i,j\le n}$ is called the Cartan matrix.
Clearly the Cartan map is injective if and only if $\det (a_{ij})\ne 0$.

In general, the Cartan map $c:K_0(A)\to G_0(A)$ may be defined for a left noetherian ring $A$ by sending $[P] \in K_0(A)$ (where $P$ is a finitely generated $A$-projective module) to $[P] \in G_0(A)$ by regarding $P$ as a finitely generated $A$-module. As noted before, if $A$ is a left noetherian ring with finite global dimension, then the Cartan map $c:K_0(A)\to G_0(A)$ is an isomorphism \cite[page 104]{Sw2}. In this article we will restrict our attention only to Cartan maps of left artinian rings.
\end{defn}

\begin{lemma} \label{l2.5}
Let $A$ be a left artinian ring with Jacobson radical $J$.
Then $A$ contains finitely many indecomposable projective ideals, $P_1,P_2,\ldots,P_n$, satisfying the following properties,
\begin{enumerate} \itemsep=-1pt
\item[{\rm (i)}]
$P_i\not\simeq P_j$ if $i\ne j$;
\item[{\rm (ii)}]
each $P_i$ is generated by an idempotent element of $A$;
\item[{\rm (iii)}]
every finitely generated $A$-projective module is isomorphic to $\bigoplus_{1\le i\le n} P_i^{(m_i)}$ for some non-negative integers $m_i$;
\item[{\rm (iv)}]
$\{P_i/JP_i :1\le i\le n\}$ forms the family of all the isomorphism classes of simple $A$-modules.
In fact, $P_i$ is the projective cover of $P_i/JP_i$.
\end{enumerate}
\end{lemma}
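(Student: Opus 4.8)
\emph{Proof idea.} The plan is to read off (i), (ii) and the first part of (iv) from Lemma \ref{l2.3}, and to derive (iii) from the lifting correspondence of Lemma \ref{l2.2} together with the semisimplicity of $A/J$. Since $A$ is left artinian, ${}_AA$ has finite length, so we may write $1=f_1+\dots+f_m$ with the $f_j$ pairwise orthogonal primitive idempotents and ${}_AA=\bigoplus_{1\le j\le m}Af_j$, each $Af_j$ an indecomposable $A$-projective left ideal. Taking $P_1,\dots,P_n$ to be representatives of the (finitely many) isomorphism classes occurring among the $Af_j$ yields (i) and (ii); moreover, by Lemma \ref{l2.3} and its proof the modules $P_1/JP_1,\dots,P_n/JP_n$ run without repetition through all isomorphism classes of simple $A$-modules, which is the first sentence of (iv).

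For (iii), let $P$ be a finitely generated $A$-projective module. Then $P/JP$ is a finitely generated projective $A/J$-module (being a direct summand of some $(A/J)^{(t)}$), and since $A/J$ is semisimple artinian by the Artin-Wedderburn theorem, $P/JP$ is a finite direct sum of simple modules; say $P/JP\simeq\bigoplus_{1\le i\le n}(P_i/JP_i)^{(m_i)}$ with $m_i\ge 0$. The right-hand side is canonically the $J$-reduction of the finitely generated $A$-projective module $\bigoplus_{1\le i\le n}P_i^{(m_i)}$. Since $J$ is nilpotent, $A$ is $J$-complete, so by Lemma \ref{l2.2} the reduction $Q\sto Q/JQ$ is a bijection on isomorphism classes of finitely generated $A$-projective modules; hence $P\simeq\bigoplus_{1\le i\le n}P_i^{(m_i)}$, which is (iii). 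Alternatively one may invoke the Krull-Schmidt-Azumaya theorem \cite[page 128]{CR} for $P\oplus P'\simeq A^{(t)}=\bigoplus_j(Af_j)^{(t)}$, the indecomposable summands having local endomorphism rings because $A$ is artinian.

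It remains to prove the projective-cover clause in (iv). Each $P_i$ is finitely generated, so Nakayama's lemma makes $JP_i$ a superfluous submodule of $P_i$; as $P_i$ is projective, the natural surjection $P_i\twoheadrightarrow P_i/JP_i$ is therefore a projective cover. Since projective covers are unique up to isomorphism, $P_i/JP_i\simeq P_j/JP_j$ implies $P_i\simeq P_j$; this gives the projective-cover assertion and re-confirms the pairwise non-isomorphism claimed in (i).

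The argument is essentially bookkeeping on top of standard structure theory, and I do not foresee a real obstacle. The left-artinian hypothesis is used exactly where Lemmas \ref{l2.2} and \ref{l2.3} need it --- to know that $J$ is nilpotent, so that $A$ is $J$-complete, and that $A/J$ is semisimple --- while the finite generation of $P$ and of the $P_i$ is needed for Lemma \ref{l2.2} and for Nakayama's lemma respectively. If anything, the point to handle with a little care is the identification, in (iii), of $\bigoplus_{1\le i\le n}(P_i/JP_i)^{(m_i)}$ with the $J$-reduction of $\bigoplus_{1\le i\le n}P_i^{(m_i)}$, so that Lemma \ref{l2.2} applies verbatim.
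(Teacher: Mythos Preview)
Your proposal is correct and follows essentially the same route as the paper: the paper's proof simply says that (i)--(iii) are implicit in the proof of Lemma~\ref{l2.3} and that (iv) follows from \cite[page 89, Corollary 2.25]{Sw2}, which is exactly the content you have unpacked via Lemma~\ref{l2.2} and the semisimplicity of $A/J$. The only cosmetic difference is that for the projective-cover clause in (iv) you argue directly via Nakayama's lemma rather than citing Swan's corollary, but this amounts to the same thing.
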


\begin{proof}
The proofs of (i), (ii) and (iii) are implicit in the proof of Lemma \ref{l2.3}. As to the definition of projective covers, see \cite[page 88]{Sw2}. The proof of (iv) follows from \cite[page 89, Corollary 2.25]{Sw2}.
\end{proof}

For the proof of Theorem \ref{t1.5} recall the definitions of $G_0^R(R\pi)$ and Frobenius functors. Note that the definition of Frobenius functors in Definition \ref{d2.7} is that given in \cite{Sw3} and is slightly different from that in \cite{La1}.

\begin{defn}[{\cite[page 2]{Sw3}}] \label{d2.6}
Let $R$ be a commutative ring,
$A$ be an $R$-algebra which is a finitely generated $R$-module.
Define $G_0^R(A)$ to be the abelian group with generators $[M]$ where $M$ is a finitely generated $A$-module which is $R$-projective as an $R$-module,
with relations $[M]=[M']+[M'']$ whenever there is a short exact sequence of $A$-modules $0\to M'\to M\to M''\to 0$
such that $M'$, $M$, $M''$ are $R$-projective as $R$-modules.
Note that $G_0^R(R\pi)$ is a commutative ring if $\pi$ is a finite group \cite[page 7]{Sw3}.
\end{defn}

\begin{defn}[{\cite[page 15]{La1,Sw3}}] \label{d2.7}
Let $\pi$ be a finite group,
$\Grp$ be the category whose objects are all the subgroups of $\pi$ with morphisms $\hom(\pi_1,\pi_2)$ consisting of the unique injection if $\pi_1 \subset \pi_2 \subset \pi$ with the understanding that $\hom(\pi_1,\pi_2)= \emptyset$ if $\pi_1 \not\subset \pi_2$.
Let $\Ring$ be the category of commutative rings.
A Frobenius functor consists of the following data,
\begin{enumerate} \itemsep=-1pt
\item[(i)]
for each subgroup $\pi'$ of $\pi$, there corresponds a commutative ring $F(\pi')$,
\item[(ii)]
for subgroups $\pi_1 \subset \pi_2 \subset \pi$ and the injection $i:\pi_1\to \pi_2$,
there exist the ring homomorphism $i^*:F(\pi_2)\to F(\pi_1)$ and the additive group homomorphism $i_*:F(\pi_1)\to F(\pi_2)$ satisfying
the properties that $i^*:\Grp \to \Ring$ is a contravariant functor and $i_*$ from finite groups to abelian groups is a covariant functor,
\item[(iii)] (Frobenius identity)
for each injection $i:\pi_1\to \pi_2$, if $x\in F(\pi_1)$, $y\in F(\pi_2)$, then $i_*(x)\cdot y=i_*(x\cdot (i^*y))$.
\end{enumerate}
\end{defn}

It is not difficult to see that $\pi'\mapsto G_0^R (R\pi')$ is a Frobenius functor where $R$ is a commutative ring
and $G_0^R(R\pi')$ is defined in Definition \ref{d2.6}.

\begin{defn} \label{d2.8}
Given a finite group $\pi$ and a Frobenius functor $F:\Grp \to \Ring$,
a Frobenius module $M$ over $F$ consists of the data
\begin{enumerate} \itemsep=-1pt
\item[(i)]
for each subgroup $\pi'$ of $\pi$, there corresponds an $F(\pi')$-module $M(\pi')$;
\item[(ii)]
for each injection $i:\pi_1\to \pi_2$,
there exist the contravariant additive functor $i^*:M(\pi_2)\to M(\pi_1)$ and the covariant additive functor $i_*:M(\pi_1)\to M(\pi_2)$
such that if $x\in F(\pi_2)$, $u\in M(\pi_2)$,
then $i^*(x\cdot u)=i^*(x)\cdot i^*(u)$;
\item[(iii)]
for any injection $i:\pi_1\to \pi_2$ and $x\in F(\pi_1)$, $v\in M(\pi_2)$,
then $i_*(x)\cdot v=i_*(x\cdot i^*(v))$;
if $y\in F(\pi_2)$, $u\in M(\pi_1)$, then $y\cdot i_*(u)=i_*((i^*y)\cdot u)$.
\end{enumerate}

Let $R$ be a commutative ring, $\pi$ be a finite group. Let $F$ be the Frobenius functor defined by $\pi'\mapsto G_0^R(R\pi')$.
It is easy to show that $\pi'\mapsto G_0(R\pi')$ and $\pi'\mapsto K_0(R\pi')$ are Frobenius modules
over $F$.
\end{defn}

\bigskip
The morphism of Frobenius modules over a given Frobenius functor can be defined in an obvious way.
For details, see \cite[pages 16--18]{Sw3}.
If $M_1$ and $M_2$ are Frobenius modules over a Frobenius functor $F$ and $\varphi: M_1\to M_2$ is a morphism over $F$,
then $\fn{Ker}(\varphi)$ and $\fn{Coker}(\varphi)$,
defined in the obvious way, are also Frobenius modules over $F$.

If $R$ is a commutative artinian ring,
the Cartan map of Definition \ref{d2.4} defined by $K_0(R\pi')\to G_0(R\pi')$ is a morphism
of Frobenius modules over the Frobenius functor $\pi'\to G_0^R(R\pi')$.

\bigskip
\begin{defn}[{\cite[pages 22--23]{Sw3}}] \label{d2.9}
Let $\pi$ be a finite group,
$\c{C}$ be a class of certain subgroups of $\pi$.
If $F:\Grp \to \Ring$ is a Frobenius functor and $M$ is a Frobenius module over $F$.
We define
\begin{gather*}
F(\pi)_{\c{C}}=\sum_{\pi'\in \c{C}} i_*(F(\pi')), \quad
M(\pi)_{\c{C}}=\sum_{\pi'\in \c{C}} i_*(M(\pi')), \\
M(\pi)^{\c{C}}=\bigcap_{\pi'\in\c{C}} \fn{Ker}\{i^*:M(\pi)\to M(\pi')\}.
\end{gather*}

It can be shown that $F(\pi)_{\c{C}}$ is an ideal of $F(\pi)$,
$M(\pi)_{\c{C}}$ and $M(\pi)^{\c{C}}$ are submodules of $M(\pi)$ over $F(\pi)$,
both of $M(\pi)/M(\pi)_{\c{C}}$ and $M(\pi)^{\c{C}}$ are modules over $F(\pi)/F(\pi)_{\c{C}}$,
(see \cite[pages 22-23, Lemma 2.6 and Lemma 2.7]{Sw3}).
\end{defn}

\bigskip
Now we turn to the proof of Theorem \ref{t1.5}.
Our proof is an adaptation of the proof in \cite[page 36, Theorem 2.20]{Sw3}.

Suppose that $R$ is a commutative artinian ring and $\pi$ is a finite group.
We will show that the Cartan map $c_{\pi}: K_0(R\pi)\to G_0(R\pi)$ is injective.

\medskip
Step 1.
We claim that if $c_{\pi'}$ is injective for any cyclic subgroup $\pi'$ of $\pi$,
then $c_{\pi}$ is injective for the group $\pi$.

Consider the Frobenius functor $F:\Grp \to \Ring$ defined by $F(\pi')=G_0^R(R\pi')$ where $\pi'$ is any subgroup of $\pi$.
Note that the Cartan map $c_{\pi'}:K_0(R\pi')\to G_0(R\pi')$ is a morphism of Frobenius modules $K_0(R\pi) \to G_0(R\pi)$ over the Frobenius functor $F$.
Define a Frobenius module by $M(\pi')=\fn{Ker}\{c_{\pi'}:K_0(R\pi')\to G_0(R\pi')\}$.
Note that $M(\pi')=0$ if $\pi'$ is a cyclic subgroup by the assumption at the beginning of this step.

Let $\c{C}$ be the class of all the cyclic subgroups of $\pi$.
Thus $M(\pi)^{\c{C}}=M(\pi)$ since $M(\pi')=0$ if $\pi'$ is cyclic.

Let $|\pi|=n$.
Then $n^2\cdot (G_0^R(R\pi)/G_0^R(R\pi)_{\c{C}})=0$ by Artin's induction theorem \cite[page 24, Corollary 2.12]{Sw3}.
Since $M(\pi)^{\c{C}}$ is a module over $G_0^R(R\pi)/G_0^R(R\pi)_{\c{C}}$,
it follows that $n^2\cdot M(\pi)^{\c{C}}=0$ by \cite[page 23, Lemma 2.10]{Sw3}.

As $M(\pi)^{\c{C}}=M(\pi)$ and $M(\pi)$ is a subgroup of $K_0(R\pi)$ which is
a free abelian group of finite rank by Lemma \ref{l2.3}, we find that $M(\pi)^{\c{C}}$ is a torsion subgroup of $K_0(R\pi)$. It follows that $M(\pi)^{\c{C}}=0$.
Thus $c_{\pi}: K_0(R\pi)\to G_0(R\pi)$ is injective.

Note that the above arguments was formalized in \cite[Corollary 3.5]{La1}.

\medskip
Step 2.
It remains to show that $c_{\pi}:K_0(R\pi)\to G_0(R\pi)$ is injective if $\pi$ is a cyclic group.

Without loss of generality, we may assume that $R$ is a commutative artinian local ring.
Write $R=(R,\c{M})$ where $\c{M}$ is the maximal ideal of $R$ and $k=R/\c{M}$ is the residue field.

Let $\pi=\langle \sigma\rangle$ be a cyclic group of order $m$.
We may write $k\pi =k[\sigma] \simeq k[X]/\langle X^m-1\rangle$ where $k[X]$ is the polynomial ring.
Note that $\fn{rad}(R)=\c{M}$ and $\fn{rad}(R)\cdot R\pi \subset \fn{rad}(R\pi)$
(see, for examples, \cite[page 74, Corollary 5.9; Sw2, page 170, Lemma 11.1]{La2}).
Thus
\[
R\pi/\fn{rad}(R\pi) \simeq \frac{R\pi/\c{M}\cdot R\pi}{\fn{rad}(R\pi)/\c{M}\cdot R\pi}
\simeq k\pi/\fn{rad}(k\pi) \simeq k[X]/\langle f(X)\rangle
\]
with $f(X)=\prod_{1\le i\le t} f_i(X)$ where $f_1(X),\ldots,f_t(X)$ are all the distinct monic irreducible factors of $X^m-1$ in $k[X]$.

It follows that $S_i=k[X]/\langle f_i(X)\rangle$, $1\le i\le t$,
are all the simple modules over $k[X]/\langle f(X)\rangle \simeq R\pi/\fn{rad}(R\pi)$.
By Lemma \ref{l2.5}, $S_1,\ldots,S_t$ are all the non-isomorphic simple $R\pi$-modules and
their projective covers $P_1,\ldots,P_t$ are all the non-isomorphic indecomposable $R\pi$-projective modules.
Consequently, $K_0(R\pi)=\bigoplus_{1\le i\le t} \bm{Z}\cdot [P_i]$ and $G_0(R\pi)=\bigoplus_{1\le i\le t}\bm{Z}\cdot [S_i]$.
We will consider the Cartan map $c_{\pi}: K_0(R\pi) \to G_0(R\pi)$.

Write $J=\fn{rad}(R\pi)$.
For $1\le i\le t$, consider the filtration $P_i\supset JP_i \supset J^2P_i \supset \cdots \supset J^s P_i=\{0\}$ (note that $J$ is a nilpotent ideal).
Each quotient module $J^j P_i/J^{j+1}P_i$ can be regarded as a module over $R\pi/J\simeq k[X]/\langle f(X)\rangle$.
Note that $\bar{f}_i\cdot J^j P_i/J^{j+1}P_i=0$ because $0=$ $\bar{f}_i\cdot S_i\simeq \bar{f}_i\cdot P_i/JP_i$
(remember that $R\pi$ is a commutative ring as $\pi$ is cyclic).
Thus $J^jP_i/J^{j+1}P_i$ becomes a module over $k[X]/\langle f_i(X)\rangle$.
It follows that the only simple $R\pi$-module which may arise as a Jordan-H\"older composition factor of $P_i$ is  $S_i=k[X]/\langle f_i(X)\rangle$.
We conclude that $c_{\pi}([P_i])=a_i[S_i]$ for some positive integer $a_i$.
Hence the determinant of the Cartan matrix is non-zero.
\qed

\begin{example} \label{e2.10}
Let $R$ be a commutative artinian ring, $\pi$ be a finite group.
By Lemma \ref{l2.5}, every finitely generated projective $R\pi$-module is a direct sum
of projective ideals generated by some idempotent of $R\pi$.
If $P$ and $Q$ are finitely generated $R\pi$-projective modules, we will show that $P\simeq Q$ if and only if
$P$ and $Q$ have the same composition factors.
For, if $[P]=[Q]$ in $G_0(R\pi)$, then $c([P]-[Q])=0$ where $c:K_0(R\pi)\to G_0(R\pi)$ is the Cartan map.
By Theorem \ref{t1.5}, $[P]=[Q]$ in $K_0(R\pi)$.
Thus $P\oplus F\simeq Q\oplus F$ for some free $R\pi$-module $F$ of finite $\fn{rank}$.
By the Krull-Schmidt-Azumaya Theorem \cite[page 128]{CR},
we find that $P\simeq Q$.

On the other hand, let $A$ be a left artinian ring such that the Cartan map $c: K_0(A) \to G_0(A)$ is not injective (such an artinian ring does exist by \cite[Lemma 2]{BFVZ}). By Lemma \ref{l2.3}, choose indecomposable A-projective modules $P_1,P_2,\ldots,P_n$ and
simple $A$-modules $M_1,\allowbreak M_2,\ldots,M_n$ such that,
for $1\le i\le n$, $M_i\simeq P_i/JP_i$. Then there is some $1 \le i \le n$ such that $M_j$ arises in the composition factor of $P_i$ for some $j \neq i$; otherwise, the determinant of the Cartan matrix would be positive. In general the Cartan matrix is a diagonal matrix (as in the proof of the Theorem \ref{t1.5}) if and only if $Hom_A(P_i, P_j)=0$ for any $1 \le i,j \le n$ with $i \neq j$ by \cite[page 325, Proposition (21.19)]{La2}.
\end{example}

The following lemma is a folklore among experts (see, for example, \cite[page 190]{La3}). We include it here for completeness.

\begin{lemma} \label{l2.15}
Let $k$ be a field, $\pi$ be a finite group.
\begin{enumerate} \itemsep=-1pt
\item[{\rm (i)}]
If $\fn{char}k=0$ or $\fn{char}k=p > 0$ with $p \nmid |\pi|$, then the global dimension of $k\pi$ is zero.
\item[{\rm (ii)}]
If $\fn{char}k=p > 0$ with $p \mid |\pi|$, then the global dimension of $k\pi$ is infinite.

\end{enumerate}
\end{lemma}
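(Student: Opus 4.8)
The plan is to handle the two cases by classical structure theory, independently of the rest of the paper. For Part (i), suppose $\fn{char} k=0$ or $\fn{char} k=p>0$ with $p\nmid|\pi|$, so that $|\pi|\cdot 1$ is a unit in $k$. I would invoke Maschke's argument: given any $k\pi$-module $M$ and a $k\pi$-submodule $N$, choose a $k$-linear retraction $\varphi\colon M\to N$ and form $\psi=\frac{1}{|\pi|}\sum_{g\in\pi} g\varphi g^{-1}$, which is a $k\pi$-linear retraction, so $N$ is a direct summand of $M$. Hence every $k\pi$-module is semisimple, i.e. $k\pi$ is a semisimple ring, and therefore every $k\pi$-module is projective; so the global dimension of $k\pi$ is $0$.

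For Part (ii) the strategy is to produce one $k\pi$-module of infinite projective dimension --- the trivial module $k$ --- by restricting to a cyclic subgroup of order $p$. Since $\fn{char} k=p$ is prime and $p\mid|\pi|$, Cauchy's theorem furnishes a subgroup $\pi_p=\langle\sigma\rangle$ of $\pi$ with $|\pi_p|=p$. As $X^p-1=(X-1)^p$ in characteristic $p$, we get $k\pi_p\simeq k[X]/\langle X^p-1\rangle=k[X]/\langle (X-1)^p\rangle$, and the substitution $t=X-1$ identifies $k\pi_p$ with the local artinian ring $R:=k[t]/\langle t^p\rangle$. Over $R$ the trivial module $k\simeq R/\langle t\rangle$ carries the periodic complex
\[
\cdots \xrightarrow{\ t^{p-1}\ } R \xrightarrow{\ t\ } R \xrightarrow{\ t^{p-1}\ } R \xrightarrow{\ t\ } R \longrightarrow k \longrightarrow 0,
\]
which one checks is exact and never stabilizes at a projective, because $k$ is not $R$-projective: $R$ is local, hence a finitely generated projective $R$-module is free, while $\dim_k k=1<p=\dim_k R$. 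Thus $k$ has infinite projective dimension over $k\pi_p$.

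It remains to transfer this to $k\pi$. Choosing representatives of the right cosets of $\pi_p$ in $\pi$ exhibits $k\pi$ as a free left $k\pi_p$-module, in particular as a flat one. Consequently the restriction to $k\pi_p$ of a $k\pi$-projective resolution of the trivial $k\pi$-module $k$ is again exact, is a resolution of the trivial $k\pi_p$-module $k$ of the same length, and has all its terms $k\pi_p$-projective, each being a direct summand of a free $k\pi$-module and hence of a free $k\pi_p$-module. Therefore the projective dimension of $k$ over $k\pi_p$ does not exceed its projective dimension over $k\pi$; since the former is infinite, so is the latter, and the global dimension of $k\pi$ is infinite.

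I do not anticipate a genuine obstacle: the statement is classical. The only point needing care is the final reduction, where one uses that $k\pi$ is $k\pi_p$-free (so that restriction sends $k\pi$-projectives to $k\pi_p$-projectives while preserving exactness), together with the elementary fact that $R/\langle t\rangle$ fails to be projective over $R=k[t]/\langle t^p\rangle$ when $p\ge 2$.
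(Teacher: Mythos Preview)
Your argument is correct. Part (i) coincides with the paper's (Maschke gives semisimplicity, hence global dimension zero). For Part (ii) you take a genuinely different route. The paper argues that $k\pi$, being a Frobenius algebra, is right self-injective and hence right Kasch, then cites a general dichotomy that for such rings the global dimension is either $0$ or $\infty$; after that it only needs to exhibit one non-projective module, which it does via $u=\sum_{\sigma\in\pi}\sigma$ (so $u^2=|\pi|\cdot u=0$) and shows that $k\pi/(k\pi\cdot u)$ cannot be projective. You instead work directly: restrict to a cyclic subgroup of order $p$, write down the explicit periodic free resolution of the trivial module over $k[t]/\langle t^p\rangle$, and lift the infinite projective dimension back to $k\pi$ using that $k\pi$ is $k\pi_p$-free so that $k\pi$-projectives restrict to $k\pi_p$-projectives. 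The paper's proof is shorter once one accepts the self-injective/Kasch dichotomy as a black box; yours is fully elementary, avoids that machinery, and pinpoints the trivial module as the concrete witness. Two minor remarks on your write-up: exactness of the restricted resolution requires no flatness hypothesis, since restriction of scalars is always exact --- the freeness of $k\pi$ over $k\pi_p$ is what ensures projectives restrict to projectives; and for $p>2$ the syzygies in your periodic complex alternate between $(t)$ and $(t^{p-1})$ rather than both being $k$, but the same dimension count ($1<p$ and $p-1<p$, so neither is free over the local ring $R$) handles both cases.
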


\begin{proof}
(i) $k\pi$ is semisimple by Maschke's Theorem. Thus every $k\pi$-module is projective \cite[page 29]{La2}.

(ii) $k\pi$ is right self-injective by \cite[page 420, Exercise 14]{La3}. Hence it is right Kasch \cite[page 411]{La3}. By \cite[page 189, Corollary 5.74]{La3} the global dimension of $k\pi$ is either zero or infinite.

Now suppose that $\fn{char}k=p > 0$ and $p \mid |\pi|$. Once we find a $k\pi$-module which is not projective, then we are done (because of the assertion of the above paragraph).

Define $u=\sum_{\sigma \in \pi}\sigma \in k\pi$. Then $u^2=0$ and $u$ belongs to the center of $k\pi$.

Write $I=k\pi \cdot u$, the ideal generated by $u$. We claim that $k\pi/I$ is not a projective $k\pi$-module.

Otherwise, $I$ is a direct summand of $k\pi$. It follows that $I=k\pi \cdot e$ for some idempotent $e$ of $k\pi$.
Write $e=\alpha u$ where $\alpha \in k\pi$. Then $e=e^2=(\alpha u)(\alpha u)=\alpha^2 u^2=0$. This is impossible.
\end{proof}

\section{Projective modules}

\begin{theorem} \label{t3.1}
Let $R$ be a Dedekind domain with $\fn{char}R=p>0$.
Let $\pi$ be a finite group, $M$ be an $R\pi$-module.
Assume that $p\mid |\pi|$ and choose a $p$-Sylow subgroup $\pi_p$ of $\pi$.
Then $M$ is an $R\pi$-projective module. $\Leftrightarrow$ The restriction of $M$ to $R\pi_p$ is an $R\pi_p$-projective module.
$\Leftrightarrow$ The restriction of $M$ to $R\pi'$ is an $R\pi'$-projective module where $\pi'$ is any elementary abelian subgroup of $\pi_p$.
\end{theorem}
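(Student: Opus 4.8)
\medskip
\noindent\emph{Proof strategy.} The plan is to split the assertion into two equivalences --- \emph{(I)} $M$ is $R\pi$-projective $\iff$ $M_{\pi_p}$ is $R\pi_p$-projective, and \emph{(II)} $M_{\pi_p}$ is $R\pi_p$-projective $\iff$ $M_{\pi'}$ is $R\pi'$-projective for every elementary abelian subgroup $\pi'$ of $\pi_p$ --- and to prove \emph{(I)} by a transfer (Higman) argument and \emph{(II)} by reduction to the residue fields together with Chouinard's theorem. We may assume $M$ finitely generated (this is the case actually needed; cf.\ Theorem \ref{t1.6}(1)), and the choice of $\pi_p$ is immaterial, since any two $p$-Sylow subgroups of $\pi$ are conjugate and restriction along conjugation is an isomorphism of module categories. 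Note also that as soon as any of the modules $M$, $M_{\pi_p}$, $M_{\pi'}$ occurring below is projective over its group ring, $M$ is $R$-projective (restrict to $R$), hence $R$-torsion free because $R$ is a Dedekind domain; this will be used freely.

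\emph{(I).} The implication ``$\Rightarrow$'' is the standard fact that restriction of a projective module along the subring $R\pi_p\subseteq R\pi$ is projective, since $R\pi$ is free of rank $[\pi:\pi_p]$ as a left $R\pi_p$-module. For ``$\Leftarrow$'' I would run the relative-projectivity argument: as $\fn{char}R=p$ and $[\pi:\pi_p]$ is prime to $p$, the integer $[\pi:\pi_p]$ is a unit in $R$, and the map
\[
m\ \longmapsto\ [\pi:\pi_p]^{-1}\sum_{g\pi_p\in\pi/\pi_p} g\otimes g^{-1}m
\]
is a well-defined $R\pi$-linear section of the counit $R\pi\otimes_{R\pi_p}M_{\pi_p}\twoheadrightarrow M$. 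Hence $M$ is an $R\pi$-direct summand of the induced module $(M_{\pi_p})^{\pi}$, which is $R\pi$-projective whenever $M_{\pi_p}$ is $R\pi_p$-projective (induction preserves projectivity); therefore $M$ is $R\pi$-projective.

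\emph{(II).} The heart of the matter is a residue-field criterion: for any subgroup $H\subseteq\pi_p$ and any finitely generated $RH$-module $N$,
\[
\begin{gathered}
N\ \text{is}\ RH\text{-projective}\\
\iff\ N\ \text{is}\ R\text{-projective, and}\ N/\c{M}N\ \text{is}\ (R/\c{M})H\text{-projective for every maximal ideal}\ \c{M}\ \text{of}\ R. \qquad(\star)
\end{gathered}
\]
Here ``$\Rightarrow$'' is immediate ($RH$-projective $\Rightarrow$ $R$-projective, and $-\otimes_R R/\c{M}$ preserves projectivity). For ``$\Leftarrow$'' one uses that projectivity of a finitely generated module over the module-finite $R$-algebra $RH$ is an $R$-local property and is preserved and reflected by completion at each maximal ideal (faithfully flat descent, together with ``finitely generated flat over a noetherian ring is projective''); this reduces the claim to the case in which $R$ is a complete discrete valuation ring with uniformiser $t$ and residue field $k$. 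There $N$ is $R$-free and $N/tN$ is $kH$-projective, so one picks a split epimorphism $(kH)^{(s)}\twoheadrightarrow N/tN$, lifts it to a map $(RH)^{(s)}\to N$ (onto, by Nakayama) with $R$-free kernel $L$, observes that $L/tL$ is a direct summand of $(kH)^{(s)}$, and lifts the corresponding idempotent of $\operatorname{End}_{kH}((kH)^{(s)})$ to an idempotent of $\operatorname{End}_{RH}((RH)^{(s)})$ --- legitimate because $RH$ is $t$-adically complete and $tRH\subseteq\fn{rad}(RH)$ --- whence $N$ is a direct summand of $(RH)^{(s)}$. I expect this to be the only genuine obstacle: Lemma \ref{l2.2} does not apply verbatim here, since non-isomorphic $R$-lattices can have isomorphic reductions modulo $t$, so one really must argue through the radical-adic completion (equivalently, invoke the order-theoretic local-and-complete test for projectivity of lattices).

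Granting $(\star)$, I would finish by invoking Chouinard's theorem: each residue field $k=R/\c{M}$ has characteristic $p$, so a finitely generated $k\pi_p$-module is $k\pi_p$-projective if and only if its restriction to every elementary abelian subgroup of the $p$-group $\pi_p$ is projective (these are exactly the elementary abelian $p$-subgroups of $\pi_p$). Applying $(\star)$ with $H=\pi_p$, then Chouinard's theorem over each $k$, then $(\star)$ with $H=\pi'$ for each elementary abelian $\pi'\subseteq\pi_p$, and using $(M/\c{M}M)_{\pi'}=M_{\pi'}/\c{M}M_{\pi'}$ throughout, yields
\[
\begin{gathered}
M_{\pi_p}\ \text{is}\ R\pi_p\text{-projective}\ \iff\ M\ \text{is}\ R\text{-projective and}\ M/\c{M}M\ \text{is}\ (R/\c{M})\pi_p\text{-projective}\ (\forall\,\c{M})\\
\iff\ M_{\pi'}\ \text{is}\ R\pi'\text{-projective}\ (\forall\ \text{elementary abelian}\ \pi'\subseteq\pi_p),
\end{gathered}
\]
which, together with \emph{(I)}, proves the theorem. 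Everything beyond the idempotent-lifting step inside $(\star)$ --- the transfer section in \emph{(I)}, the localisation, the completion, and the bookkeeping with Chouinard's theorem --- is routine.
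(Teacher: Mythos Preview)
Your argument for \emph{(I)} is exactly the paper's: since $[\pi:\pi_p]$ is a unit in $R$, the module $M$ is $(\pi,\pi_p)$-projective, hence a direct summand of $(M_{\pi_p})^{\pi}$, which is $R\pi$-projective whenever $M_{\pi_p}$ is. The paper cites \cite[page 452, Proposition 19.5]{CR} for this; your explicit transfer section is the same content.

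Your route for \emph{(II)} is correct but substantially longer than the paper's. The paper dispatches this step in one line by invoking Chouinard's theorem \cite[Corollary 1.1]{Ch} \emph{directly over $R$}: Chouinard's result is stated and proved for an arbitrary commutative coefficient ring, not just for fields (the paper makes this explicit in the remark following the proof). So the equivalence ``$M_{\pi_p}$ is $R\pi_p$-projective $\iff$ $M_{\pi'}$ is $R\pi'$-projective for every elementary abelian $\pi'\subseteq\pi_p$'' follows immediately, with no passage through residue fields and no local criterion $(\star)$. Your detour---localise, complete, lift idempotents, apply Chouinard over each $R/\c{M}$, then climb back up via $(\star)$---does work and gives an honest self-contained reduction to the field case, which may be what one would do without knowing the generality of Chouinard's statement; but it buys nothing here beyond avoiding that citation, at the cost of the genuinely technical step you flag (idempotent lifting over the completion) and the finite-generation hypothesis you are forced to impose. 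The paper's proof needs neither.
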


\begin{proof}
Suppose $M$ is an $R\pi_p$-projective module.
We will show that $M$ is a projective module over $R\pi$.
Since $[\pi:\pi_p]$ is a unit in $R$,
it follows that $M$ is $(\pi,\pi_p)$-projective and $M$ is a direct summand of $(M_{\pi_p})^{\pi}$ by \cite[page 452, Proposition 19.5]{CR}
(where $M_{\pi_p}$ is the restriction of $M$ to $R\pi_p$, and $(M_{\pi_p})^{\pi}:=R\pi \otimes_{R\pi_p}(M_{\pi_p})$).

Since $M_{\pi_p}$ is an $R\pi_p$-projective module, it follows that $(M_{\pi_p})^{\pi}$ is an $R\pi$-projective module.
So is its direct summand $M$.

Now assume that $M$ is an $R\pi'$-projective module for all elementary abelian $p$-group $\pi'$ of $\pi_p$.
By \cite[Corollary 1.1]{Ch}, $M$ is an $R\pi_p$-projective module.

Note that, by a theorem of Rim \cite[Proposition 4.9]{Ri1}, a module $M$ is $R\pi$-projective if and only if so is it when restricted to all the Sylow subgroups of $\pi$. But the situation of our theorem requires that $\fn{char}R=p>0$; thus only a $p$-Sylow subgroup is sufficient to guarantee the projectivity over $R\pi$.
\end{proof}

\begin{remark}
If $\pi$ is a $p$-group, recall the definition of the Thompson subgroup of $\pi$, which is denoted by $J(\pi)$ \cite[page 202]{Is}: $J(\pi)$ is the subgroup of $\pi$ generated by all the elementary abelian subgroups of $\pi$.

With the definition of $J(\pi)$, we may rephrase Chouinard's theorem \cite[Corollary 1.1]{Ch} as follows: Let $\pi$ be a $p$-group and $M$ be an $R\pi$-module where $R$ is any commutative ring. Then $M$ is an $R\pi$-projective module if and only if so is its restriction to the group ring of $J(\pi)$ over $R$. Similarly, Theorem \ref{t3.1} may be formulated via the Thompson subgroup of $\pi_p$.
\end{remark}

\bigskip
Recall the following well-known lemma, which will be used in the sequel.

\begin{lemma}[{\cite[Lemma 2.4; Sw3, page 13]{Ba}}] \label{l3.5}
Let $A$ be a ring, $I$ be a two-sided ideal of $A$ with $I\subset \fn{rad}(A)$.
If $P$ and $Q$ are finitely generated $A$-projective modules satisfying that $P/IP \simeq Q/IQ$, then $P\simeq Q$.
\end{lemma}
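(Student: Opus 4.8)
The plan is to build an honest isomorphism $P\to Q$ by lifting the given isomorphism $\bar\varphi\colon P/IP\xrightarrow{\sim}Q/IQ$ and then squeezing it with Nakayama's lemma applied twice. Since $P$ is projective and the canonical map $Q\to Q/IQ$ is surjective, the composite $P\twoheadrightarrow P/IP\xrightarrow{\bar\varphi}Q/IQ$ lifts to an $A$-homomorphism $\varphi\colon P\to Q$ which reduces to $\bar\varphi$ modulo $I$.

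First I would show $\varphi$ is surjective. From $\varphi(P)+IQ=Q$ we get $I\cdot\bigl(Q/\varphi(P)\bigr)=Q/\varphi(P)$; as $Q$ is finitely generated so is $Q/\varphi(P)$, and since $I\subseteq\fn{rad}(A)$, Nakayama's lemma gives $Q/\varphi(P)=0$. Then, because $Q$ is projective, the exact sequence $0\to\ker\varphi\to P\xrightarrow{\varphi}Q\to 0$ splits, so $P\simeq Q\oplus\ker\varphi$; in particular $\ker\varphi$, being a direct summand of the finitely generated module $P$, is itself finitely generated.

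Tensoring the split sequence with $A/I$ preserves (split) exactness, yielding $0\to\ker\varphi/I\ker\varphi\to P/IP\xrightarrow{\bar\varphi}Q/IQ\to 0$. By hypothesis $\bar\varphi$ is an isomorphism, so $\ker\varphi/I\ker\varphi=0$, i.e.\ $\ker\varphi=I\ker\varphi$. A second application of Nakayama's lemma (using that $\ker\varphi$ is finitely generated and $I\subseteq\fn{rad}(A)$) gives $\ker\varphi=0$, so $\varphi$ is the desired isomorphism.

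This argument is essentially formal; the only thing to watch is that the finite-generation hypotheses are genuinely invoked, once for $Q/\varphi(P)$ and once for $\ker\varphi$ (the latter being finitely generated precisely because it splits off the finitely generated $P$). I do not anticipate a real obstacle — this is a standard Nakayama-type rigidity statement, and the same bookkeeping goes through verbatim for a not-necessarily-commutative $A$, which is the generality in which the lemma is stated.
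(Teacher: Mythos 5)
Your argument is correct. The paper does not prove this lemma at all---it states it as well known, citing Bass [Ba, Lemma 2.4] and Swan [Sw3, page 13]---and your lift-then-Nakayama-twice argument (surjectivity of the lifted map from $\varphi(P)+IQ=Q$, then splitting off $\ker\varphi$ as a finitely generated direct summand and killing it with a second application of Nakayama) is precisely the standard proof given in those references, valid for noncommutative $A$ exactly as you note.
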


\medskip
\begin{theorem} \label{t3.2}
Let $p$ be a prime number, $\pi$ be a $p$-group, and $R$ be a Dedekind domain with quotient field $K$ such that $\fn{char}R=p$.
If $P$ is a finitely generated $R\pi$-projective module,
then $KP$ is a free $K\pi$-module, and $P\simeq F\oplus \c{A}$ where $F$ is a free $R\pi$-module and $\c{A}$ is a projective ideal of $R\pi$. Moreover, for any non-zero ideal $I$ of $R$, we may choose $\c{A}$ such that $I+(R\cap \c{A})=R$.

On the other hand, if it is assumed furthermore that $R$ is semilocal,
then every finitely generated $R\pi$-projective module $P$ is a free module.
\end{theorem}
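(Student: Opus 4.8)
The plan is to reduce the whole statement to the structure theory of finitely generated modules over the Dedekind domain $R$; the essential point is that the augmentation ideal of $R\pi$ is nilpotent when $\fn{char}R=p$ and $\pi$ is a $p$-group. First, since $P$ is a direct summand of a free $R\pi$-module it is, as an $R$-module, a finitely generated projective $R$-module, so $KP=K\otimes_R P$ is a finitely generated projective $K\pi$-module. Because $\fn{char}K=p$ and $\pi$ is a $p$-group, $K\pi$ is a left artinian ring whose augmentation ideal is nilpotent with quotient the field $K$; hence, by Lemma \ref{l2.5} applied to $A=K\pi$ (there is a single simple module $K$, with projective cover $K\pi$), every finitely generated $K\pi$-projective module is free, so $KP$ is free over $K\pi$ of some rank $n$. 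We may assume $n\ge 1$, since $n=0$ forces $P=0$ (as $P$ embeds in $KP$).

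Let $\omega=\ker(R\pi\to R)$ be the augmentation ideal of $R\pi$. It is well known that the augmentation ideal of the group ring of a $p$-group over a field of characteristic $p$ is nilpotent; since $R\pi$ is obtained from $(\bm Z/p\bm Z)\pi$ by base change along $\bm Z/p\bm Z\to R$, it follows that $\omega$ is nilpotent, and clearly $R\pi/\omega\simeq R$. In particular $\omega\subseteq\fn{rad}(R\pi)$, so by Lemma \ref{l3.5} a finitely generated $R\pi$-projective module is determined up to isomorphism by its reduction modulo $\omega$ (alternatively, $R\pi$ is $\omega$-complete and one may invoke the bijection of Lemma \ref{l2.2}). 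Now $\bar P:=P/\omega P\simeq P\otimes_{R\pi}R$ is a finitely generated projective $R$-module, and base change to $K$ identifies $K\otimes_R\bar P$ with $KP/\omega_K KP\simeq(K\pi/\omega_K)^{(n)}=K^{(n)}$, where $\omega_K$ is the augmentation ideal of $K\pi$; hence $\fn{rank}_R\bar P=n$. By the Steinitz structure theorem over the Dedekind domain $R$, we get $\bar P\simeq R^{(n-1)}\oplus\mathfrak a$ for some nonzero ideal $\mathfrak a$ of $R$.

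To obtain Parts (2) and (3) simultaneously, fix a nonzero ideal $I$ of $R$. By the standard fact that each ideal class of a Dedekind domain has a representative coprime to $I$ — concretely, choose $x\in\mathfrak a^{-1}$ lying in no $\mathfrak p\mathfrak a^{-1}$ for the finitely many primes $\mathfrak p\supseteq I$ (possible by the Chinese Remainder Theorem) and replace $\mathfrak a$ by $x\mathfrak a$ — we may assume $\mathfrak a+I=R$. Put $\c A:=\mathfrak aR\pi$. Since $R\pi$ is $R$-flat, the natural map $\mathfrak a\otimes_R R\pi\to R\pi$ is injective with image $\c A$, so $\c A\simeq\mathfrak a\otimes_R R\pi$ is a finitely generated projective left ideal of $R\pi$, and $\c A/\omega\c A\simeq\mathfrak a\otimes_R R\simeq\mathfrak a$; moreover, writing $R\pi=\bigoplus_{g\in\pi}Rg$ we have $\c A=\bigoplus_{g\in\pi}\mathfrak a g$, whence $R\cap\c A=\mathfrak a$. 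Therefore $(R\pi)^{(n-1)}\oplus\c A$ has the same reduction modulo $\omega$ as $P$, namely $R^{(n-1)}\oplus\mathfrak a\simeq\bar P$, so $P\simeq F\oplus\c A$ with $F=(R\pi)^{(n-1)}$ free and $I+(R\cap\c A)=R$. Finally, if $R$ is moreover semilocal, then it is a semilocal Dedekind domain and hence a principal ideal domain, so $\fn{Pic}(R)=0$, $\mathfrak a\simeq R$, and $\bar P\simeq R^{(n)}$; as this has the same reduction modulo $\omega$ as $(R\pi)^{(n)}$, we conclude $P\simeq(R\pi)^{(n)}$, a free module.

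The substantive points are the nilpotence of $\omega$ in characteristic $p$ — the hinge that collapses the problem onto the commutative Dedekind ring $R$ — and the ideal-class adjustment behind the ``moreover'' clause; the remainder (the rank count and the elementary properties of $\mathfrak aR\pi$) is routine. An alternative would be to extract a splitting $P\simeq F\oplus Q$ from Bass's Theorem \ref{t1.2}, applied with $A=R\pi$ and the Cartan hypothesis supplied by Theorem \ref{t1.3}; but one would then still have to recognize $Q$ as an ideal of $R\pi$ and to secure the coprimality with $I$ separately, so the argument above seems more economical.
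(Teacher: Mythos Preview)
Your argument is correct. In fact it coincides almost exactly with the alternative proof the paper records as Theorem \ref{t3.13} (attributed to Endo): reduce modulo the nilpotent augmentation ideal $\omega$, invoke the Steinitz structure theorem over $R$ to write $\bar P\simeq R^{(n-1)}\oplus\mathfrak a$, adjust $\mathfrak a$ within its ideal class to make it coprime to $I$, and then lift back via Lemma \ref{l2.2}/Lemma \ref{l3.5}. Your computation $R\cap(\mathfrak aR\pi)=\mathfrak a$ from the decomposition $R\pi=\bigoplus_{g}Rg$ is a clean way to secure the ``moreover'' clause, and the semilocal case drops out immediately from $\fn{Pic}(R)=0$.

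The paper's own proof of Theorem \ref{t3.2}, however, takes a different route --- precisely the one you sketch and set aside in your last paragraph. It establishes $KP$ free as you do, then applies Bass's Theorem \ref{t1.2} (with the Cartan hypothesis furnished by Theorem \ref{t1.3}) to split off a free summand $F$ and obtain a complement $\c A$ with $\c A/\c M\c A\simeq (R/\c M)\pi$ for every maximal ideal $\c M$; it then checks $K\c A\simeq K\pi$, uses Lemma \ref{l3.5} locally to see that $\c A$ and $R\pi$ lie in the same genus, and finally invokes Roiter's Theorem to realise $\c A$ as an ideal of $R\pi$ coprime to $I$. What this buys is an illustration of the Cartan-map machinery and of Roiter's genus argument in action; what your approach buys is economy and self-containment --- no appeal to Theorem \ref{t1.2}, Theorem \ref{t1.3}, or Roiter, and an explicit description $\c A=\mathfrak aR\pi$ that the paper's proof does not directly supply.
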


\begin{proof}
By \cite[page 114, Theorem 5.24]{CR} $\fn{rad}(K\pi)\allowbreak =\sum_{\lambda\in\pi} K\cdot (\lambda-1)$.
Thus $K\pi/\fn{rad}(K\pi)\simeq K$.
By Lemma \ref{l2.2} (with $I=\fn{rad}(K\pi)$),
all the finitely generated $K\pi$-projective modules are free modules,
because all the finitely generated projective modules over $K\pi/\fn{rad}(K\pi)$ ($\simeq K$) are the free modules $K^{(n)}$.

Consequently, if $P$ is a finitely generated $R\pi$-projective module, then $KP$ is a free $K\pi$-module.
Thus we may apply Theorem \ref{t1.2} to $P$ because the second assumption of Theorem \ref{t1.2}
is valid by Theorem \ref{t1.3} (note that $\dim (m\text{-}\spec(R))\le 1$). Thus $P\simeq F\oplus \c{A}$ where $F$ is a free $R\pi$-module and $\c{A}$ satisfies that, for any maximal ideal $\mathcal{M}$ of $R$, $\c{A}/\mathcal{M}\c{A}$ is isomorphic to $R'\pi$ where $R'=R/\mathcal{M}$.
In case $R$ is semilocal,
then $\dim(m\text{-}\spec (R))=0$ and therefore finitely generated $R\pi$-projective modules are free by Theorem \ref{t1.2}. We remark that the result when $R$ is semilocal may be deduced also from Theorem \ref{t3.11}.

From $P\simeq F\oplus \c{A}$, we find that $KF\oplus K\c{A} \simeq KP$ is $K\pi$-free. By the Krull-Schmidt-Azumaya's Theorem \cite[page 128]{CR} it follows that $K\c{A} \simeq K\pi$. Thus $\c{A}$ is a projective ideal of $R\pi$. It remains to show that $\c{A}$ may be chosen such that $I+(R\cap \c{A})=R$ for any non-zero ideal $I$ of $R$.

First we will show that $\c{A}$ and the free module $R\pi$ belong to the same genus. For any maximal ideal $\mathcal{M}$ of $R$, consider the projective $R_{\mathcal{M}}\pi$-modules $\c{A}_{\mathcal{M}}$ and $R_{\mathcal{M}}\pi$. As $\mathcal{M}R_{\mathcal{M}}\pi \subset \fn{rad}(R_{\mathcal{M}}\pi)$ by \cite[page 74, Corollary 5.9]{La2} and $\c{A}_{\mathcal{M}}/\mathcal{M}\c{A}_{\mathcal{M}}\simeq \c{A}/\mathcal{M}\c{A} \simeq R_{\mathcal{M}}\pi/\mathcal{M}R_{\mathcal{M}}\pi$, we may apply Lemma \ref{l3.5}. It follows that $\c{A}_{\mathcal{M}}$ and $R_{\mathcal{M}}\pi$ are isomorphic.

Once we know that $\c{A}$ and $R\pi$ belong to the same genus, we may apply Roiter's Theorem \cite[page 37]{Sw3}. Thus we have an exact sequence of $R\pi$-modules $0 \to \c{A} \to R\pi \to X \to 0$ such that $I +$Ann$_RX=R$ where Ann$_RX=\{r \in R: r \cdot X=0 \}$. Note that $R\cap \c{A}=$ Ann$_R R\pi/\c{A}$ and $R\pi/\c{A} \simeq X$. Hence the result.
\end{proof}

\begin{remark}
The assumption that no prime divisor of $|\pi|$ is a unit in $R$ is crucial in the above Theorem \ref{t3.2} and in Theorem \ref{t1.1}. In fact, if some prime divisor of $|\pi|$ is invertible in $R$, then $R\pi$ contains
a non-trivial idempotent element (and thus $KP$ will not be a free $K\pi$-module for some projective module $P$); Coleman shows that the converse is true also \cite[page 678]{CR}.
\end{remark}

\medskip
The following theorem, due to S. Endo, provides an alternative proof of Theorem \ref{t3.2}.

\begin{theorem} \label{t3.13}
Let $R$ be a Dedekind domain with $\fn{char}R=p > 0$, and $\pi$ be a $p$-group.
If $P$ is a finitely generated $R\pi$-projective module,
then $P$ is isomorphic to $R\pi \otimes_R P_0$ for some $R$-projective module $P_0$, and is also isomorphic to a direct sum of a free module and a projective ideal of the form $R\pi \otimes_R I$ where $I$ is some non-zero ideal of $R$. Moreover, for any non-zero ideal $I'$ of $R$, the ideal $I$ may be chosen so that $I+I'=R$.
\end{theorem}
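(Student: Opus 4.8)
The plan is to establish the theorem in two stages. First I would show that $P$ is induced from an $R$-module: writing $\omega\subset R\pi$ for the augmentation ideal (the kernel of the ring map $R\pi\to R$, $\sigma\mapsto 1$ for $\sigma\in\pi$) and $P_0:=P/\omega P$, the claim is that $P\simeq R\pi\otimes_R P_0$. Then I would feed the structure theorem for finitely generated projective modules over the Dedekind domain $R$ (Steinitz) into this isomorphism to split off a free summand and a projective ideal.

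For the first stage, $P$ is a direct summand of some $(R\pi)^{(m)}$, so $P_0$ is a direct summand of $(R\pi)^{(m)}/\omega(R\pi)^{(m)}\simeq R^{(m)}$ and hence is a finitely generated $R$-projective module. Since $P_0$ is $R$-projective, the canonical surjection $P\twoheadrightarrow P_0$ admits an $R$-linear section $s\colon P_0\to P$, which I extend to the $R\pi$-linear map $\varphi\colon R\pi\otimes_R P_0\to P$, $\lambda\otimes x\mapsto\lambda\, s(x)$. Because $R\pi$ is $R$-free, hence $R$-flat, there is a natural identification $(R\pi\otimes_R P_0)/\omega(R\pi\otimes_R P_0)\simeq P_0$, and under it $\varphi$ reduces modulo $\omega$ to $\fn{id}_{P_0}$. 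This part is purely formal; the content lies in promoting $\varphi$ to an isomorphism.

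Here one must localize, since $\omega\not\subseteq\fn{rad}(R\pi)$ in general and --- as $R^{(n)}$ versus $R^{(n-1)}\oplus I$ shows --- local isomorphy of two finitely generated projectives need not force global isomorphy over a Dedekind domain; the key is that the \emph{particular} map $\varphi$ is a local isomorphism everywhere. Fix a maximal ideal $\mathcal{M}$ of $R$ and put $k=R/\mathcal{M}$, so $\fn{char}k=p$. As $\pi$ is a $p$-group, $\fn{rad}(k\pi)$ is the nilpotent augmentation ideal and $k\pi/\fn{rad}(k\pi)\simeq k$; together with $\mathcal{M}R_{\mathcal{M}}\pi\subseteq\fn{rad}(R_{\mathcal{M}}\pi)$ \cite[page 74, Corollary 5.9]{La2}, this shows $R_{\mathcal{M}}\pi$ is local with $\fn{rad}(R_{\mathcal{M}}\pi)=\omega R_{\mathcal{M}}\pi+\mathcal{M}R_{\mathcal{M}}\pi$. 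Hence $P_{\mathcal{M}}$ and $(R\pi\otimes_R P_0)_{\mathcal{M}}=R_{\mathcal{M}}\pi\otimes_{R_{\mathcal{M}}}(P_0)_{\mathcal{M}}$ are free $R_{\mathcal{M}}\pi$-modules of the same rank $n:=\fn{rank}_R P_0$, using $P_{\mathcal{M}}/\omega P_{\mathcal{M}}\simeq(P_0)_{\mathcal{M}}$. Since $\varphi$ reduces to the identity modulo $\omega$, $\varphi_{\mathcal{M}}$ is an isomorphism modulo $\fn{rad}(R_{\mathcal{M}}\pi)$, so Nakayama's lemma makes it surjective and therefore bijective. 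As $\ker\varphi$ and $\fn{coker}\varphi$ are finitely generated $R$-modules that vanish after localizing at every maximal ideal, $\varphi$ is an isomorphism, i.e.\ $P\simeq R\pi\otimes_R P_0$.

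For the second stage, I may assume $P\neq 0$, so $n\geq 1$; Steinitz's theorem gives $P_0\simeq R^{(n-1)}\oplus I$ for a non-zero ideal $I$ of $R$ whose class is determined by $P_0$. Applying $R\pi\otimes_R(-)$ yields $P\simeq(R\pi)^{(n-1)}\oplus(R\pi\otimes_R I)$, and $R\pi\otimes_R I$ is $R\pi$-projective and (by flatness of $R\pi$ over $R$) isomorphic to the left ideal $I\cdot R\pi$ of $R\pi$ --- a projective ideal of the stated form. Finally, given a non-zero ideal $I'$ of $R$, every ideal class of $R$ has a representative coprime to $I'$ (a standard fact for Dedekind domains), and $R\pi\otimes_R I$ depends only on the $R$-module isomorphism class of $I$; choosing $I$ in the class of $P_0$ with $I+I'=R$ finishes the proof. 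The one real obstacle is the localization argument --- in particular, recognizing $R_{\mathcal{M}}\pi$ as a local ring so that an ``isomorphism modulo the radical'' upgrades $\varphi_{\mathcal{M}}$ to an isomorphism; the rest is bookkeeping.
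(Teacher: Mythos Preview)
Your argument is correct, but the detour through localization is unnecessary and your justification for it is mistaken. You assert that ``one must localize, since $\omega\not\subseteq\fn{rad}(R\pi)$ in general,'' but in the present setting the augmentation ideal $\omega$ is in fact \emph{nilpotent} in $R\pi$, hence lies in $\fn{rad}(R\pi)$ globally. Indeed, over the quotient field $K$ one has $\fn{rad}(K\pi)=\sum_{\lambda\in\pi}K(\lambda-1)$ (since $\pi$ is a $p$-group and $\fn{char}K=p$), and this ideal is nilpotent because $K\pi$ is artinian; since $R$ is a domain, $R\pi\hookrightarrow K\pi$, so $\omega^n\subseteq\fn{rad}(K\pi)^n\cap R\pi=0$ for large $n$. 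This is exactly the paper's key observation: because $\omega$ is nilpotent, $R\pi$ is $\omega$-complete, and Lemma~\ref{l2.2} gives directly a bijection $P\leftrightarrow P_0:=P/\omega P$ between isomorphism classes of finitely generated projectives over $R\pi$ and over $R\pi/\omega\simeq R$; since both $P$ and $R\pi\otimes_R P_0$ reduce to $P_0$, they are isomorphic. Your own map $\varphi$ would already be a global isomorphism by Nakayama applied with the nilpotent ideal $\omega$ (surjectivity from $\varphi\bmod\omega=\fn{id}$, then split off the kernel and kill it the same way), with no need to pass to $R_{\c{M}}\pi$.

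Your second stage---applying the Steinitz structure theorem to $P_0$, tensoring back up, and replacing $I$ by an isomorphic ideal coprime to $I'$---coincides with the paper's. So the overall strategy is the same; what your version buys is an explicit isomorphism $\varphi$, at the cost of an avoidable local-to-global step motivated by a false premise about $\fn{rad}(R\pi)$.
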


\begin{proof}
Let $\phi: R\pi \to R$ be the augmentation map defined by $\phi(\lambda)=1$ for any $\lambda \in \pi$. Let $J$ be the kernel of $\phi$. Define $J_0= \sum_{\lambda \in \pi} R\cdot (\lambda -1)$. Then $J=J_0 \cdot R\pi$.

Let $K$ be the quotient field of $R$. Then $\fn{rad}(K\pi)=J_0 \cdot K\pi$ by \cite[page 114]{CR}. Since $\fn{rad}(K\pi)$ is nilpotent, so is the ideal $J$ of $R\pi$. It follows that $R\pi$ is $J$-complete and $J \subset \fn{rad}(R\pi)$.

Apply Lemma \ref{l2.2} to get a one-to-one correspondence of finitely generated projective modules over $R\pi$ and over $R$. For any finitely generated projective module $P$ over $R\pi$, define $P_0=P/JP$. Since both $P$ and $R\pi \otimes_R P_0$ descend to $P_0$, it follows that $P$ is isomorphic to $R\pi \otimes_R P_0$.

Every finitely generated projective $R$-module is isomorphic to $R^{(n)} \oplus I$ where $n$ is a non-negative integer and $I$ is a non-zero ideal of $R$ (see \cite[page 219, Theorem A15]{Sw3}). Thus a finitely generated projective $R\pi$-module is isomorphic to a direct sum of a free module and a projective ideal of the form $R\pi \otimes_R I$. If $I'$ is any non-zero ideal of $R$, we can find a non-zero ideal $I_0$ of $R$ such that $I \simeq I_0$ and $I_0 + I'=R$ by \cite[page 218, Theorem A12]{Sw3}.
\end{proof}

\medskip
A corollary of Theorem \ref{t3.13} is the following.

\begin{theorem} \label{t3.11}
Let $R$ be a Dedekind domain with $\fn{char}R=p > 0$, and $\pi$ be a $p$-group. Then $R$ is a principal ideal domain if and only if every finitely generated $R\pi$-projective module is isomorphic to a free module.
\end{theorem}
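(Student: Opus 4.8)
The plan is to derive both implications from Theorem \ref{t3.13} (equivalently, from the correspondence of Lemma \ref{l2.2} applied to the ideal $J=\ker\phi$, where $\phi\colon R\pi\to R$ is the augmentation map): this sets up a bijection between the isomorphism classes of finitely generated $R\pi$-projective modules and those of finitely generated $R$-projective modules via $P\mapsto P/JP$, with inverse $P_0\mapsto R\pi\otimes_R P_0$. The whole point is that ``descent along the augmentation map'' converts freeness over $R\pi$ into freeness over $R$, and conversely.

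For the implication that a principal ideal domain $R$ forces every finitely generated $R\pi$-projective module to be free: given such a module $P$, Theorem \ref{t3.13} gives $P\simeq R\pi\otimes_R P_0$ with $P_0$ a finitely generated $R$-projective module. Since $R$ is a principal ideal domain, $P_0$ is free, say $P_0\simeq R^{(n)}$, and hence $P\simeq R\pi\otimes_R R^{(n)}\simeq(R\pi)^{(n)}$. (Alternatively, use the second assertion of Theorem \ref{t3.13}: the ideal $I$ occurring there is principal, so $R\pi\otimes_R I\simeq R\pi$, and $P$ is free.)

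For the converse, assume every finitely generated $R\pi$-projective module is free, and fix a non-zero ideal $I$ of $R$. Because $R$ is a Dedekind domain, $I$ is an invertible, hence finitely generated projective, $R$-module, so $R\pi\otimes_R I$ is a direct summand of some $(R\pi)^{(m)}$ and is therefore a finitely generated $R\pi$-projective module; by hypothesis $R\pi\otimes_R I\simeq(R\pi)^{(n)}$ for some $n\ge 0$. Now reduce modulo $J$: tensoring the short exact sequence $0\to J\to R\pi\to R\to 0$ with $I$ over $R$ (right exactness of tensor product) identifies $(R\pi\otimes_R I)/J(R\pi\otimes_R I)$ with $(R\pi/J)\otimes_R I\simeq R\otimes_R I\simeq I$, whereas $(R\pi)^{(n)}/J(R\pi)^{(n)}\simeq R^{(n)}$. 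Hence $I\simeq R^{(n)}$ as $R$-modules. Tensoring with the quotient field $K$ and comparing dimensions, $1=\dim_K(K\otimes_R I)=n$, so $I\simeq R$ is principal. As $I$ was arbitrary, $R$ is a principal ideal domain.

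There is no serious obstacle here; essentially everything is already packaged in Theorem \ref{t3.13}. The one place demanding a little care is the identification $(R\pi\otimes_R I)/J(R\pi\otimes_R I)\simeq I$, i.e. that the module $R\pi\otimes_R I$ corresponds to the ideal $I$ under the bijection $P\mapsto P/JP$; once this is in hand, the rank count over $K$ finishes the argument.
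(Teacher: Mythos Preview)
Your argument is correct and is precisely the intended one: the paper presents Theorem \ref{t3.11} as a direct corollary of Theorem \ref{t3.13} without further proof, and you have supplied exactly the details implicit in that attribution, namely that the bijection $P\leftrightarrow P/JP$ of Lemma \ref{l2.2} identifies freeness over $R\pi$ with freeness over $R$, after which the structure theory of finitely generated projectives over a Dedekind domain finishes both directions. Your verification that $(R\pi\otimes_R I)/J(R\pi\otimes_R I)\simeq I$ and the rank comparison over $K$ are the right steps; there is nothing to add.
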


\medskip
The following lemma is a partial generalization of Theorem \ref{t3.2} from $p$-groups to finite groups $\pi$ with $p\mid |\pi|$.

\begin{lemma} \label{l3.12}
Let $R$ be a Dedekind domain with $\fn{char}R=p>0$ and with quotient field $K$. Let $\pi$ be a finite group such that $p\mid |\pi|$, and $\pi_p$ be a $p$-Sylow subgroup of $\pi$. Let $P$ be a finitely generated $R\pi$-projective module, $P_{\pi_p}$ be the restriction of $P$ to $R\pi_p$, and $(P_{\pi_p})^{\pi}:=R\pi \otimes_{R\pi_p}(P_{\pi_p})$ be the induced module of $P_{\pi_p}$. Then $K(P_{\pi_p})^{\pi}$ is $K\pi$-free and $(P_{\pi_p})^{\pi}$ is isomorphic to
 $F\oplus \c{A}$ where $F$ is a free $R\pi$-module and $\c{A}$ is a projective ideal of $R\pi$.
\end{lemma}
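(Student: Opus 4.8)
The plan is to exploit the fact that $P_{\pi_p}$, being the restriction of the $R\pi$-projective module $P$, is automatically an $R\pi_p$-projective module (since $R\pi$ is free as an $R\pi_p$-module, this is routine), and then feed it into Theorem \ref{t3.2}. First I would apply Theorem \ref{t3.2} to the $R\pi_p$-projective module $P_{\pi_p}$ over the $p$-group $\pi_p$: it tells us that $K\otimes_R P_{\pi_p}$ is a free $K\pi_p$-module, say of rank $s$. Then, because induction commutes with the flat base change $R \to K$, we get $K\,(P_{\pi_p})^{\pi} = K\otimes_R \bigl(R\pi \otimes_{R\pi_p} P_{\pi_p}\bigr) \simeq K\pi \otimes_{K\pi_p} (K\otimes_R P_{\pi_p}) \simeq K\pi \otimes_{K\pi_p} (K\pi_p)^{(s)} \simeq (K\pi)^{(s)}$, which is $K\pi$-free. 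This disposes of the first assertion.

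For the decomposition statement, I would first observe that $(P_{\pi_p})^{\pi} = R\pi \otimes_{R\pi_p} P_{\pi_p}$ is a finitely generated $R\pi$-projective module: $P_{\pi_p}$ is a direct summand of some $(R\pi_p)^{(n)}$, and applying the functor $R\pi \otimes_{R\pi_p} (-)$ shows $(P_{\pi_p})^{\pi}$ is a direct summand of $(R\pi)^{(n)}$. Now I would simply invoke Theorem \ref{t3.2} a second time, this time for the group $\pi$ (not $\pi_p$) and the $R\pi$-projective module $(P_{\pi_p})^{\pi}$: the hypothesis of that theorem is exactly that $\pi$ is a finite group with $p \mid |\pi|$ — wait, Theorem \ref{t3.2} as stated requires $\pi$ to be a $p$-group. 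So instead I would re-run the argument inside the proof of Theorem \ref{t3.2} directly: since $K\,(P_{\pi_p})^{\pi}$ is $K\pi$-free (just shown) and the Cartan map $c_{\c{M}}\colon K_0(R\pi/\c{M}R\pi) \to G_0(R\pi/\c{M}R\pi)$ is injective for every maximal ideal $\c{M}$ of $R$ by Theorem \ref{t1.5} (applied to the artinian ring $R/\c{M}$), and $\dim(m\text{-}\spec R) \le 1$ because $R$ is a Dedekind domain, Theorem \ref{t1.2} applies to $(P_{\pi_p})^{\pi}$. It yields $(P_{\pi_p})^{\pi} \simeq F \oplus \c{A}$ with $F$ free over $R\pi$ and $\c{A}$ a module with $\c{A}/\c{M}\c{A}$ free of rank $d' \le 1$ over $R\pi/\c{M}R\pi$ for all $\c{M}$; then the Krull--Schmidt--Azumaya argument (comparing $K$-ranks, exactly as in the last two paragraphs of the proof of Theorem \ref{t3.2}) forces $K\c{A} \simeq K\pi$, so $\c{A}$ is a projective ideal of $R\pi$.

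The main obstacle I anticipate is the bookkeeping around applying Theorem \ref{t1.2} (or equivalently the engine behind Theorem \ref{t3.2}) to a group that is not a $p$-group: one must verify that the three hypotheses of Theorem \ref{t1.2} genuinely hold in this wider setting — the $K\pi$-freeness of $K\,(P_{\pi_p})^{\pi}$, the injectivity of all the relevant Cartan maps (which is precisely where Theorem \ref{t1.5}, our generalization of Brauer--Nesbitt, is needed, rather than Theorem \ref{t1.3} which only covers fields but that suffices here since $R/\c{M}$ is a field anyway), and the dimension bound. The $K\pi$-freeness is the genuinely new input and is handled by the base-change-and-induction identity above; everything else is either standard or already recorded earlier in the paper. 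I would also double-check that $\c{A}$ really is realized as a left \emph{ideal} of $R\pi$ and not merely a rank-one projective: this follows because $\c{A}$ is a direct summand of $(P_{\pi_p})^{\pi}$, hence of a free $R\pi$-module, and a rank-one $R\pi$-projective direct summand of $(R\pi)^{(m)}$ with $K\c{A} \simeq K\pi$ embeds as a left ideal via projection to a suitable coordinate, exactly as in Theorem \ref{t1.1} and Theorem \ref{t3.2}.
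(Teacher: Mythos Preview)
Your proposal is correct and follows essentially the same route as the paper's proof: use Theorem \ref{t3.2} on the $p$-group $\pi_p$ to see that $KP_{\pi_p}$ is $K\pi_p$-free, deduce via the base-change/induction identity that $K(P_{\pi_p})^{\pi}$ is $K\pi$-free, and then invoke Theorem \ref{t1.2} (with Cartan-map injectivity supplied by Theorem \ref{t1.3}, since $R/\c{M}$ is a field) to obtain the decomposition $F\oplus\c{A}$. The paper's proof is terser and omits the bookkeeping you spell out, but the strategy is identical.
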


\begin{proof}
If $K(P_{\pi_p})^{\pi}$ is $K\pi$-free, then we may apply Theorem \ref{t1.2} to finish the proof. It remains to show that $K(P_{\pi_p})^{\pi}$ is $K\pi$-free.

By Theorem \ref{t3.2}, $KP_{\pi_p}$ is $K\pi_p$-free. It follows that $K(P_{\pi_p})^{\pi}$ is $K\pi$-free. Done.

Note that $P$ is a direct summand of $(P_{\pi_p})^{\pi}$ by \cite[pages 449-450]{CR}.
\end{proof}

\begin{example} \label{e3.3}
A different proof of Theorem \ref{t1.1} other than that in \cite{Sw1} is given in \cite[Lecture 4]{Gr}.
It is proved first that, if $R$ is a semilocal Dedekind domain with $\fn{char}R=0$ and no prime divisor of $|\pi|$ is a unit in $R$,
then every finitely generated $R\pi$-projective module is a free module \cite[page 21,Theorem 4.7]{Gr}.

We remark that we may derive the above result directly from Theorem \ref{t1.1}.
For, if all the maximal ideals of $R$ are $\c{M}_1,\c{M}_2,\ldots,\c{M}_t$,
define $I=\c{M}_1\cap \c{M}_2\cap \cdots \cap \c{M}_t$ and apply Theorem \ref{t1.1}.
Then every finitely generated $R\pi$-projective module $P$ is isomorphic to $F\oplus \c{A}$ where $F$ is free
and $\c{A}$ is a projective ideal of $R\pi$ with $I+(R\cap \c{A})=R$.
It follows that $R\cap\c{A}=R$, i.e.\ $1\in\c{A}\subset R\pi$.
Thus $\c{A}=R\pi$ is also a free module.

Note that, when $\pi=\{1\}$ is the trivial group, the similar statement as the above result (for semilocal rings) is not true in general. It is well-known that projective modules over a quasi-local ring are free modules
(Kaplansky's Theorem; see \cite[page 82, Corollary 2.14]{Sw2} for the case of finitely generated projective modules).

When $R$ is a commutative ring with only finitely many maximal ideals (e.g. a semilocal ring) having no non-trivial idempotent elements,
then every projective $R$-module (which may not be finitely generated) is a free module, an analogy of Kaplansky's Theorem proved by Hinohara \cite{Hi}; a similar result for finitely generated $R$-projective modules was proved independently by S. Endo.

Thus if $R$ is a commutative ring with only finitely many maximal ideals, say, $t$ is the number of distinct maximal ideals, we will show that there are at most $t$ primitive idempotents in $R$. Write $R/$rad$(R)=\prod_{1\le i \le t} K_i$ where each $K_i$ is a field (and is indecomposable). If $R=\prod_{1\le j\le s} R_j$, from rad$(R)=\prod_{1 \le j \le s}$ rad$(R_j)$, we find that $R/$rad$(R)$ has at least $s$ maximal ideals and therefore $s \le t$. Thus we may write $R=\prod_{1\le j\le s} R_j$ where each $R_j$ has no non-trivial idempotent elements; obviously $s \le t$. Although a projective $R$-module is not necessarily free, it is isomorphic to a direct sum of free modules over these $R_j$'s by applying Hinohara's Theorem.
\end{example}

\medskip
\begin{example} \label{e3.6}
We remind the reader that Theorem 3 in \cite[page 533]{Ba} is generalized as Theorem 8.2 in \cite[page 24]{Ba2} (see also \cite[page 171, Theorem 11.2]{Sw2}). We reproduce these two theorems as follows.

\begin{thm}[{\cite[Theorem 3]{Ba}}] \label{tA}
Let $R$ be a commutative noetherian ring, $A$ be an $R$-algebra which is a finitely generated $R$-module and $d=$ $\dim (m\text{-}\fn{spec}(R))$.
Let $P$ be a finitely generated $A$-projective module such that
there is an integer $r$ such that $P/\c{M}P\simeq (A/\c{M}A)^{(r)}$ for all maximal ideals $\c{M}$ in $R$,
then $P\simeq F\oplus Q$ where $F$ is a free module of rank $r'$, $Q/\c{M}Q\simeq (A/\c{M}A)^{(d')}$ for all maximal ideals $\c{M}$ in $R$ with $d'=$ min $\{d, r \}$ and $r'=r-d'$.
\end{thm}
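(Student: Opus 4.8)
The plan is to reduce Theorem \ref{tA} to Serre's splitting theorem in the form established by Bass for algebras module-finite over a commutative noetherian ring, whose engine is Bass's estimate $\fn{sr}(A)\le d+1$ for the stable rank of $A$. First I would reformulate the hypothesis as honest local freeness: for each $\c{M}\in m\text{-}\fn{spec}(R)$ the algebra $A_{\c{M}}$ is module-finite over the local ring $R_{\c{M}}$, so $\c{M}A_{\c{M}}\subseteq\fn{rad}(A_{\c{M}})$ (as used in the proof of Theorem \ref{t3.2}, cf.\ \cite[page 74, Corollary 5.9]{La2}); since $P_{\c{M}}/\c{M}P_{\c{M}}\simeq P/\c{M}P\simeq (A/\c{M}A)^{(r)}\simeq (A_{\c{M}}/\c{M}A_{\c{M}})^{(r)}$, Lemma \ref{l3.5} gives $P_{\c{M}}\simeq (A_{\c{M}})^{(r)}$ for every $\c{M}$, i.e.\ $P$ is locally free of rank $r$. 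The theorem then follows from the single claim: \emph{if $P$ is finitely generated $A$-projective with $P/\c{M}P\simeq (A/\c{M}A)^{(r)}$ for all $\c{M}$ and $r>d$, then $P\simeq A\oplus P_1$ for some finitely generated $A$-projective $P_1$ with $P_1/\c{M}P_1\simeq (A/\c{M}A)^{(r-1)}$ for all $\c{M}$.} Granting the claim, one iterates it while the fibrewise rank exceeds $d$ --- that is, exactly $r-d$ times --- obtaining $P\simeq A^{(r-d)}\oplus Q$ with $Q/\c{M}Q\simeq (A/\c{M}A)^{(d)}$ for all $\c{M}$; if instead $r\le d$ one takes $F=0$ and $Q=P$. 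In either case $d'=\min\{d,r\}$, $r'=r-d'$, and $Q/\c{M}Q\simeq (A/\c{M}A)^{(d')}$ for all $\c{M}$, as required.

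For the claim, the geometric heart is to exhibit a unimodular element $p\in P$ --- one generating a free rank-$1$ direct summand of $P$, equivalently with non-zero image in $P/\c{N}P$ for every maximal left ideal $\c{N}$ of $A$. Every such $\c{N}$ lies over a unique $\c{M}\in m\text{-}\fn{spec}(R)$, over which $A/\c{M}A$ is Artinian with only finitely many maximal left ideals and $P/\c{M}P$ is free of rank $r$ over $A/\c{M}A$. Serre's theorem (Bass's non-commutative version, which is the substance behind \cite[Theorem 3]{Ba} and \cite[page 171, Theorem 11.2]{Sw2}) then guarantees, when $r$ exceeds $d=\dim(m\text{-}\fn{spec}(R))$, that such a $p$ exists: one constructs it by a prime-avoidance / general-position argument on the $d$-dimensional noetherian space $m\text{-}\fn{spec}(R)$, adjusting $p$ component by component so as to dodge the finitely many bad loci on each of its finitely many irreducible components. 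Given $p$, the $A$-linear map $A\to P$, $a\mapsto ap$, is split injective, so $A\simeq Ap$ and $P\simeq Ap\oplus P_1$ with $P_1$ finitely generated $A$-projective; reducing modulo each $\c{M}$ gives $(A/\c{M}A)^{(r)}\simeq (A/\c{M}A)\oplus(P_1/\c{M}P_1)$, and since $A/\c{M}A$ is Artinian the Krull--Schmidt--Azumaya theorem \cite[page 128]{CR} cancels one copy of $A/\c{M}A$, yielding $P_1/\c{M}P_1\simeq (A/\c{M}A)^{(r-1)}$, which is the hypothesis of the claim with $r-1$ in place of $r$.

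The main obstacle is exactly the existence of the unimodular element, i.e.\ Serre's theorem adapted to the (possibly non-commutative) algebra $A$: the dimension hypothesis $r>d$ is what leaves enough room for the avoidance argument, while the passage from $R$-modules to $A$-modules is absorbed by working fibrewise modulo $\fn{rad}(A)$, where each $A/\c{M}A$ is Artinian with only finitely many simple modules. Everything else --- the reformulation via Lemma \ref{l3.5}, the fibrewise cancellation, and the bookkeeping converting one splitting into $r-d$ of them --- is routine. Since Theorem \ref{tA} is a classical theorem of Bass, recorded here only for comparison with Theorem \ref{t1.2}, reproducing the full argument is unnecessary, and a reference to \cite[Theorem 3]{Ba} (or to \cite[page 171, Theorem 11.2]{Sw2}) suffices.
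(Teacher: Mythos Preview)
Your proposal is correct and follows essentially the same route as the paper's argument in Example \ref{e3.6}: reformulate the hypothesis as local freeness via Lemma \ref{l3.5}, split off a free rank-one summand via Serre--Bass (which the paper packages as an invocation of the more general Theorem \ref{tB}), cancel to drop the fibrewise rank by one, and induct on $r$. The only cosmetic difference is that you do the cancellation over the Artinian quotient $A/\c{M}A$ using Krull--Schmidt--Azumaya, whereas the paper works over the semilocal localization $A_{\c{M}}$ and appeals to the cancellation law for projectives there \cite[page 176]{Sw2}.
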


\begin{thm}[{\cite[page 24, Theorem 8.2]{Ba2}}] \label{tB}
Let $R$, $A$, $d$ be the same as above.
Let $P$ be a finitely generated $A$-projective module such that $P_{\c{M}}$ contains a direct summand isomorphic
to $A_{\c{M}}^{(d+1)}$ for all maximal ideals $\c{M}$ in $R$.
Then $P\simeq A\oplus Q$ for some projective module $Q$.
\end{thm}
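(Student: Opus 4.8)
The plan is to reduce the statement to the existence of a \emph{unimodular element} of $P$: an element $p\in P$ for which the $A$-homomorphism $\lambda_p\colon A\to P$, $a\mapsto ap$, is a split monomorphism. Indeed, then $P\simeq Ap\oplus Q$ with $Ap\simeq A$ and $Q$ a direct summand of the projective module $P$, hence projective, which is exactly the conclusion. Moreover $\lambda_p$ is split injective precisely when the evaluation map $\fn{ev}_p\colon\fn{Hom}_A(P,A)\to A$, $\varphi\mapsto\varphi(p)$, is surjective: from $\varphi(p)=1$ we get $\varphi\circ\lambda_p=\fn{id}_A$ (and $\lambda_p$ is injective, since $ap=0$ forces $a=a\varphi(p)=\varphi(ap)=0$), while the converse is clear. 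So the task becomes: find $p$ with $\fn{ev}_p$ onto.

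First I would observe that this surjectivity is detected fibrewise. For a maximal ideal $\c{M}$ of $R$ put $\kappa=R/\c{M}$, $\bar A=A/\c{M}A$, $\bar P=P/\c{M}P$; here $\bar A$ is artinian, so $\bar A/\fn{rad}(\bar A)$ is semisimple by Artin--Wedderburn \cite[page 57]{La2}. Since $A_{\c{M}}$ is module-finite over the local ring $R_{\c{M}}$ we have $\c{M}A_{\c{M}}\subseteq\fn{rad}(A_{\c{M}})$ \cite[page 74, Corollary 5.9]{La2}; and as $P,A$ are finitely generated over the noetherian ring $R$, the cokernel $C$ of $\fn{ev}_p$ is a finitely generated $R$-module, with $C/\c{M}C$ the cokernel of $\fn{ev}_{\bar p}\colon\fn{Hom}_{\bar A}(\bar P,\bar A)\to\bar A$ for every $\c{M}$ (base change of $\fn{Hom}$ along the finitely generated projective $P$). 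Hence, by Nakayama's lemma, $\fn{ev}_p$ is onto provided $\fn{ev}_{\bar p}$ is onto for every maximal ideal $\c{M}$, i.e.\ provided the image $\bar p$ of $p$ generates a free $\bar A$-direct summand of rank $1$ of $\bar P$ for every $\c{M}$ --- in standard terminology, $p$ is \emph{basic at every point of $m\text{-}\fn{spec}(R)$}. Dually, tensoring a splitting $P_{\c{M}}\simeq A_{\c{M}}^{(d+1)}\oplus(\text{projective})$ with $\kappa$ over $R_{\c{M}}$ shows that the hypothesis of the theorem says exactly that $\bar P$ contains $\bar A^{(d+1)}$ as an $\bar A$-direct summand, i.e.\ that $P$ has $d+1$ independent basic elements at each maximal ideal.

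The crux is then to pass from ``$d+1$ independent basic elements at each maximal ideal'' to ``a single $p$ basic at all maximal ideals at once.'' This is Bass's basic-element theorem \cite[page 24, Theorem 8.2]{Ba2}, \cite[page 171, Theorem 11.2]{Sw2}: one argues by a prime-avoidance / generic-position induction over the strata of $m\text{-}\fn{spec}(R)$ of dimension $e=d,d-1,\dots,0$, at each stage perturbing the candidate $p$ by an element of $P$ so as to avoid the finitely many proper ``non-basic'' conditions arising on the $e$-dimensional maximal ideals while spending only one of the $d+1$ available units of rank, and then descending to lower $e$; the bound $d+1=\dim\bigl(m\text{-}\fn{spec}(R)\bigr)+1$ is precisely what makes the induction close. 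The only concession to the non-commutativity of $A$ is that ``basic at $\c{M}$'' must be read inside the semisimple quotient $\bar A/\fn{rad}(\bar A)$, where the relevant length and codimension counts behave as over a field. I expect this patching induction --- together with the care needed when the residue fields are finite --- to be the main obstacle; the rest of the argument is formal.

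Granting such a globally basic $p$, the second paragraph yields $\fn{ev}_p$ surjective, hence $p$ unimodular and $P\simeq A\oplus Q$ with $Q$ projective, as desired. I would close with a remark: when $P$ has \emph{constant} $A$-rank $r$, i.e.\ $P/\c{M}P\simeq(A/\c{M}A)^{(r)}$ for all $\c{M}$ with $r\ge d+1$, one can bypass the basic-element machinery and quote Theorem \ref{tA} directly, since then $d'=\min\{d,r\}=d$ and $r'=r-d\ge1$, so $P\simeq F\oplus Q'$ with $F$ free of positive rank and hence $P\simeq A\oplus(\text{rest})$. The content of Theorem \ref{tB} is that it allows the rank of $P$ to vary over $m\text{-}\fn{spec}(R)$, which is exactly why Theorem \ref{tA} alone does not suffice.
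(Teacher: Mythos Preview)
The paper does not actually prove Theorem~\ref{tB}. It is quoted verbatim from Bass \cite[page 24, Theorem 8.2]{Ba2} inside Example~\ref{e3.6} and then \emph{used} as a black box: the paper shows that Theorem~\ref{tB} implies Theorem~\ref{tA} (and later, in Example~\ref{e4.4}, that together with Theorem~\ref{t3.7} it implies Theorem~\ref{t1.2}). There is no argument in the paper to compare your proposal against.

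That said, your sketch is a faithful outline of Bass's own proof in \cite{Ba2} (see also \cite[page 171, Theorem 11.2]{Sw2}): the reduction to finding a unimodular element, the fibrewise detection of unimodularity via Nakayama, the reformulation of the hypothesis as ``$d+1$ basic elements at each closed point'', and the stratification/prime-avoidance induction over $m\text{-}\spec(R)$ are exactly the ingredients of the basic-element theorem. Your honest acknowledgement that the patching induction is the ``crux'' and that you are essentially citing \cite{Ba2,Sw2} for it is appropriate; a self-contained proof would require carrying out that induction in full, which is several pages. Your closing remark---that for $P$ of constant local rank $r\ge d+1$ one can bypass the machinery and quote Theorem~\ref{tA} directly---is correct and is precisely the relationship the paper exploits in the reverse direction (deriving Theorem~\ref{tA} from Theorem~\ref{tB}).
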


Let $P$ be a finitely generated $A$-projective module in Theorem \ref{tA}.
Note that the assumption for $P$ (in the above Theorem \ref{tA} and also in Theorem \ref{t1.2}) that $P/\c{M}P\simeq (A/\c{M}A)^{(r)}$ for all maximal ideals $\c{M}$ in $R$ is equivalent to the assumption that $P$ and $A^{(r)}$ are locally isomorphic, i.e. $P_{\c{M}} \simeq (A_{\c{M}})^{(r)}$ for any maximal ideal $\c{M}$ in $R$. The proof is the same as that in Theorem \ref{t3.2} for the projective ideal $\c{A}$. Thus $P$ satisfies the assumption of Theorem \ref{tB}.

When $r \ge d+1$, we find $P\simeq A\oplus Q$ by Theorem \ref{tB}. Since $A_{\c{M}}$ is a (non-commutative) semilocal ring, the cancelation law is valid for finitely generated projective $A_{\c{M}}$-modules \cite[page 176]{Sw2}. Thus $Q_{\c{M}} \simeq A_{\c{M}}^{(r-1)}$ for any maximal ideal $\c{M}$ in $R$. Proceed by induction on $r$ to obtain the conclusion of Theorem \ref{tA}.
\end{example}

\section{A local criterion}

Finally we will discuss the following question.
Let $R$ be a commutative noetherian ring with total quotient ring $K$,
$A$ be an $R$-algebra which is a finitely generated projective $R$-module.
Let $P$ and $Q$ be finitely generated $A$-projective modules.
If $KP\simeq KQ$, under what situation, can we conclude that $P\simeq Q$?

The prototype of this question is a theorem of Brauer and Nesbitt \cite[page 12, Theorem 2; CR, page 424, Corollary 17.10]{BN1}:
Let $(R,\c{M})$ be a discrete valuation ring with quotient field $K$ such that $\fn{char}(R/\c{M})=p>0$.
If $\pi$ is a finite group, $M$ and $N$ are $R\pi$-lattices with $KM\simeq KN$,
then $[M/\c{M}M]=[N/\c{M}N]$ in $G_0(R/\c{M}\pi)$. A generalization of this theorem by Swan is given in \cite[Corollary 6.5]{Sw1}; see \cite[page 436, Corollary 18.16]{CR} also.

\medskip
The above results of Brauer-Nesbitt and Swan are generalized furthermore by Bass as follows.

\begin{theorem}[{Bass \cite[Theorem 2; Sw3, page 12, Theorem 1.10; CR, page 671]{Ba}}] \label{t3.7}
Let $(R,\c{M})$ be a local ring with total quotient ring $K$,
$A$ be an $R$-algebra which is a finitely generated $R$-projective module.
Assume that the Cartan map $c:K_0(A/\c{M}A)\to G_0(A/\c{M}A)$ is injective.
If $P$ and $Q$ are finitely generated $A$-projective modules such that $KP\simeq KQ$, then $P\simeq Q$.
\end{theorem}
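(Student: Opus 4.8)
The plan is to push the problem down to the finite-dimensional $k$-algebra $\bar A:=A/\c MA$ (where $k:=R/\c M$), where the hypothesis on the Cartan map can be used, and to link it to the datum $KP\simeq KQ$ through the Brauer decomposition homomorphism.

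First, since $\c M=\fn{rad}(R)$ and $A$ is a finitely generated $R$-module we have $\c MA=\fn{rad}(R)\cdot A\subseteq\fn{rad}(A)$ (the inclusion already used for $R\pi$ in the proof of Theorem~\ref{t3.2}), so by Lemma~\ref{l3.5} it suffices to prove that $\bar P:=P/\c MP$ and $\bar Q:=Q/\c MQ$ are isomorphic as $\bar A$-modules. As $A$ is $R$-projective, $P$ and $Q$ are $R$-projective, hence $R$-free (since $R$ is local), in particular $R$-torsion-free. Now $\bar A$ is finite-dimensional over $k$, hence left artinian, so by Lemma~\ref{l2.5} and the Krull--Schmidt--Azumaya theorem $\bar P\simeq\bar Q$ is equivalent to $[\bar P]=[\bar Q]$ in $K_0(\bar A)$; and since by hypothesis $c:K_0(\bar A)\to G_0(\bar A)$ is injective, it is enough to show that $\bar P$ and $\bar Q$ have the same $\bar A$-composition factors with multiplicities, i.e.\ that $[\bar P]=[\bar Q]$ in $G_0(\bar A)$.

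To relate this to the hypothesis, clear a non-zerodivisor denominator from an isomorphism $KP\xrightarrow{\sim}KQ$ to obtain an $A$-homomorphism $f:P\to Q$ with $Kf$ an isomorphism (here $P\hookrightarrow KP$ because $P$ is $R$-torsion-free). Then $0\to P\xrightarrow{f}Q\to C\to 0$ is exact and is an $A$-projective resolution of length at most $1$, with $C$ finitely generated and $KC=0$, so $C$ is annihilated by a non-zerodivisor of $R$. Tensoring this resolution over $R$ with $k$ and using $\operatorname{Tor}^R_i(P,k)=\operatorname{Tor}^R_i(Q,k)=0$ for $i\ge1$ (since $P,Q$ are $R$-free), we obtain $\operatorname{Tor}^R_i(C,k)=0$ for $i\ge2$ together with an exact sequence of $\bar A$-modules
\[
0\longrightarrow\operatorname{Tor}^R_1(C,k)\longrightarrow\bar P\longrightarrow\bar Q\longrightarrow C/\c MC\longrightarrow 0 .
\]
Hence $[\bar Q]-[\bar P]=[C/\c MC]-[\operatorname{Tor}^R_1(C,k)]$ in $G_0(\bar A)$, and the theorem comes down to the single identity $[C/\c MC]=[\operatorname{Tor}^R_1(C,k)]$ in $G_0(\bar A)$ --- equivalently, to the assertion that the class $[L/\c ML]\in G_0(\bar A)$ of an $R$-free $A$-lattice $L$ depends only on $KL$.

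This last point is where I expect the real work to lie. When $(R,\c M)$ is a discrete valuation ring it is the classical Brauer--Nesbitt argument: $C$ then has finite length, the graded pieces $W$ of its $\c M$-adic filtration are $\bar A$-modules with $\operatorname{Tor}^R_1(W,k)\simeq W\otimes_Rk\simeq W$ as $\bar A$-modules, and --- every finite length module having finite projective dimension over a discrete valuation ring --- the Euler characteristic $W\mapsto[W\otimes_Rk]-[\operatorname{Tor}^R_1(W,k)]$ is additive on finite length modules and vanishes on each $W$, so $[C/\c MC]-[\operatorname{Tor}^R_1(C,k)]=0$. For a general local ring $R$ I would reduce to this case: replacing $R$ by its $\c M$-adic completion (faithfully flat, and changing neither $\bar A$ nor the isomorphism question for finitely generated $A$-modules), one may assume $R$ complete, and then a d\'evissage --- cutting $R$ down modulo a carefully chosen non-zerodivisor so as to lower $\dim R$ whenever the support of $C$ is positive-dimensional, while treating finite length $C$ as above --- reduces everything to the discrete valuation case; see \cite[page~12]{Sw3}. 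The delicate part is precisely this d\'evissage: arranging the choice of the non-zerodivisor and controlling how the support of $C$ meets the associated primes of $R$ when $R$ is not Cohen--Macaulay. Granting $[C/\c MC]=[\operatorname{Tor}^R_1(C,k)]$, the earlier reductions give $\bar P\simeq\bar Q$, hence $P\simeq Q$ by Lemma~\ref{l3.5}.
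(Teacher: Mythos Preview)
The paper does not give its own proof of Theorem~\ref{t3.7}; it is quoted as a theorem of Bass (with references to \cite{Ba}, \cite[page~12]{Sw3}, and \cite[page~671]{CR}) and is used as a black box in the proof of Theorem~\ref{t3.9} and in Example~\ref{e4.4}. So there is nothing in the paper to compare your attempt against.

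That said, your sketch follows the route of the cited references. The reduction via $\c{M}A\subset\fn{rad}(A)$ and Lemma~\ref{l3.5} to $\bar P\simeq\bar Q$, the passage through the Cartan map to the equality $[\bar P]=[\bar Q]$ in $G_0(\bar A)$, and the Tor computation giving $[\bar Q]-[\bar P]=[C/\c{M}C]-[\operatorname{Tor}^R_1(C,k)]$ are all correct; your discrete-valuation argument for $\chi(C)=0$ is the classical Brauer--Nesbitt one and is complete. For a general local ring you honestly defer the identity $\chi(C)=0$ to \cite{Sw3}, which is fair, but your proposed d\'evissage needs care: cutting $R$ down modulo a non-zerodivisor $t$ changes the total quotient ring and, more concretely, when $tC=0$ one has $\operatorname{Tor}^R_1(C,R/tR)\simeq C$, so the sequence $0\to P\to Q\to C\to 0$ does not stay short exact after reducing modulo~$t$. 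The argument in the references organises the induction differently; since the paper itself simply cites the result, you would be matching the paper by doing the same.
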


\begin{theorem} \label{t3.9}
Let $R$ be a commutative noetherian ring with total quotient ring $K$, $A$ be an $R$-algebra which is a finitely generated $R$-projective module. Suppose that $I$ is an ideal of $R$ such that $R/I$ is artinian.
Let $\{\c{M}_1,\ldots,\c{M}_n\}$ be the set of all maximal ideals of $R$ containing $I$.
Assume that the Cartan map $c_i: K_0(A/\c{M}_iA)\to G_0(A/\c{M}_iA)$ is injective for all $1\le i\le n$.
If $P$ and $Q$ are finitely generated $A$-projective modules with $KP\simeq KQ$, then $P/IP\simeq Q/IQ$.
Consequently, if $R$ is semilocal and $I\subset\fn{rad}(R)$, then $P\simeq Q$.
\end{theorem}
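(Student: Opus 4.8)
\medskip
\noindent\textit{Sketch of the intended proof.}
The strategy is to localise at each $\c{M}_i$ so as to reduce to the local situation of Theorem~\ref{t3.7}, and then to reassemble the resulting isomorphisms over the artinian ring $R/I$. Since $A$ is a finitely generated projective $R$-module and $P,Q$ are finitely generated $A$-projective, both $P$ and $Q$ are finitely generated projective $R$-modules; in particular, writing $S$ for the set of non-zero-divisors of $R$ (so that $K=S^{-1}R$), the natural maps $P\to KP$ and $Q\to KQ$ are injective. Fix a $KA$-isomorphism $\varphi\colon KP\to KQ$. Choosing $s,t\in S$ with $s\varphi(P)\subset Q$ and $t\varphi^{-1}(Q)\subset P$ (possible because $P$ and $Q$ are finitely generated over $A$) and setting $\alpha=s\varphi|_{P}\colon P\to Q$ and $\beta=t\varphi^{-1}|_{Q}\colon Q\to P$, one computes $S^{-1}\alpha=s\varphi$ and $S^{-1}\beta=t\varphi^{-1}$, so that $S^{-1}(\beta\alpha)=st\cdot\mathrm{id}_{KP}$; hence the $A$-homomorphism $\beta\alpha-st\cdot\mathrm{id}_{P}$ vanishes after applying $S^{-1}(-)$ and therefore vanishes, since $P$ embeds in $KP$. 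Likewise $\alpha\beta=st\cdot\mathrm{id}_{Q}$. Thus, putting $u:=st\in S$, we obtain $A$-homomorphisms $\alpha\colon P\to Q$ and $\beta\colon Q\to P$ with $\beta\alpha=u\cdot\mathrm{id}_{P}$ and $\alpha\beta=u\cdot\mathrm{id}_{Q}$.

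Now fix $i$ and apply $-\otimes_{R}R_{\c{M}_i}$. The key observation is that the image of $u$ in $R_{\c{M}_i}$ is again a non-zero-divisor: if $(u/1)(a/v)=0$ in $R_{\c{M}_i}$ with $a\in R$ and $v\notin\c{M}_i$, then $wua=0$ in $R$ for some $w\notin\c{M}_i$, and since $u$ is a non-zero-divisor of $R$ this forces $wa=0$, i.e.\ $a/v=0$. Hence $u$ becomes a unit in the total quotient ring $K_i$ of $R_{\c{M}_i}$, and tensoring the identities $\beta\alpha=u\cdot\mathrm{id}_{P}$ and $\alpha\beta=u\cdot\mathrm{id}_{Q}$ with $K_i$ over $R_{\c{M}_i}$ shows that $\alpha$ induces a $K_iA_{\c{M}_i}$-isomorphism $K_iP_{\c{M}_i}\simeq K_iQ_{\c{M}_i}$. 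Since $A_{\c{M}_i}$ is an $R_{\c{M}_i}$-algebra which is a finitely generated projective $R_{\c{M}_i}$-module, $(R_{\c{M}_i},\c{M}_iR_{\c{M}_i})$ is local, $P_{\c{M}_i}$ and $Q_{\c{M}_i}$ are finitely generated $A_{\c{M}_i}$-projective, and $A_{\c{M}_i}/\c{M}_iR_{\c{M}_i}A_{\c{M}_i}\simeq A/\c{M}_iA$ (so that the Cartan map involved is exactly $c_i$, injective by hypothesis), Theorem~\ref{t3.7} applies and yields $P_{\c{M}_i}\simeq Q_{\c{M}_i}$ as $A_{\c{M}_i}$-modules.

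Finally I would reassemble. As $R/I$ is artinian it is a finite product $R/I\simeq\prod_{i=1}^{n}R_i$ of local artinian rings, the factor $R_i$ being the localisation $(R/I)_{\c{M}_i/I}=R_{\c{M}_i}/IR_{\c{M}_i}$; correspondingly $A/IA\simeq\prod_i(A\otimes_{R}R_i)$, and every $A/IA$-module $M$ decomposes as $M\simeq\bigoplus_i(M\otimes_{R/I}R_i)$. Since $(P/IP)\otimes_{R/I}R_i=P\otimes_{R}R_i=P\otimes_{R}(R_{\c{M}_i}/IR_{\c{M}_i})=P_{\c{M}_i}/IP_{\c{M}_i}$, and similarly for $Q$, the isomorphisms $P_{\c{M}_i}\simeq Q_{\c{M}_i}$ $(1\le i\le n)$ give $(P/IP)\otimes_{R/I}R_i\simeq(Q/IQ)\otimes_{R/I}R_i$ for every $i$, whence $P/IP\simeq Q/IQ$. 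If in addition $R$ is semilocal and $I\subset\fn{rad}(R)$, then $IA\subset\fn{rad}(R)\cdot A\subset\fn{rad}(A)$ by \cite[page 74, Corollary 5.9]{La2}, so Lemma~\ref{l3.5} promotes $P/IP\simeq Q/IQ$ to $P\simeq Q$.

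The step I expect to require the most care is the comparison of total quotient rings: because a non-zero-divisor of $R_{\c{M}_i}$ need not be the image of any element of $S$, one cannot simply base-change the isomorphism $KP\simeq KQ$ along $R\to R_{\c{M}_i}$. The denominator-clearing device in the first paragraph is what circumvents this, since it produces a single element $u\in S$ with $\beta\alpha=\alpha\beta=u\cdot\mathrm{id}$, and a non-zero-divisor of $R$ automatically remains a non-zero-divisor in every localisation $R_{\c{M}_i}$, hence becomes a unit in each $K_i$ and delivers the local $K_i$-isomorphism needed to invoke Theorem~\ref{t3.7}. The remaining steps are routine bookkeeping with localisations, the structure theorem for commutative artinian rings, and the cited results.
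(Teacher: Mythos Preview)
Your argument is correct and follows the same plan as the paper: localise at each $\c{M}_i$, transport $KP\simeq KQ$ to the total quotient ring $K_i$ of $R_{\c{M}_i}$, apply Theorem~\ref{t3.7} to get $P_{\c{M}_i}\simeq Q_{\c{M}_i}$, reassemble over the artinian decomposition $R/I\simeq\prod_i R_{\c{M}_i}/IR_{\c{M}_i}$, and finish with Lemma~\ref{l3.5}. Two small differences are worth noting. The paper first passes through an intermediate semilocalisation at $S=R\setminus\bigcup_i\c{M}_i$ before localising at each prime; your direct passage to each $R_{\c{M}_i}$ is more economical. Conversely, your denominator-clearing construction of $\alpha,\beta$ with $\alpha\beta=\beta\alpha=u\cdot\mathrm{id}$ is more work than needed: the very fact you verify --- that a non-zero-divisor of $R$ remains one in every $R_{\c{M}_i}$ --- already shows that the canonical map $R\to R_{\c{M}_i}$ extends to a ring homomorphism $K\to K_i$, so one may simply apply $K_i\otimes_K(-)$ to the given isomorphism $KP\simeq KQ$ and identify $K_i\otimes_K KP\simeq K_i\otimes_R P\simeq K_i\otimes_{R_{\c{M}_i}}P_{\c{M}_i}$. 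This is exactly how the paper handles it, so the worry you flag in your last paragraph (that non-zero-divisors of $R_{\c{M}_i}$ need not come from $S$) is not an obstacle; only the forward direction matters for the base change.
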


\begin{proof}
Step 1.
Let $S=R\backslash \bigcup_{1\le i\le n} \c{M}_i$, $J=\bigcap_{1\le i\le n}\c{M}_i$.
Then $S^{-1}R$ is a semilocal ring with maximal ideals $S^{-1}\c{M}_1,S^{-1}\c{M}_2,\ldots,S^{-1}\c{M}_n$.
Consider the projective modules $S^{-1}P$ and $S^{-1}Q$ over the algebra $S^{-1}A$.
We will show that $S^{-1}P/S^{-1}JP\simeq S^{-1}Q/S^{-1}JQ$ in Step 2. Assume this result (which will be proved in Step 2). Then we apply Lemma \ref{l3.5} (note that $S^{-1}JA \subset$ rad $(S^{-1}A)$ by \cite[page 74, Corollary 5.9]{La2}).
We get $S^{-1}P\simeq S^{-1}Q$, and therefore $S^{-1}P/S^{-1}IP\simeq S^{-1}Q/S^{-1}IQ$.

Write the primary decomposition of $I$ as $I=\bigcap_{1\le i\le n}I_i$ where $I_i$ is an $\c{M}_i$-primary ideal.
Then $S^{-1}I=\bigcap_{1\le i\le n} S^{-1}I_i$.
For $1\le i\le n$, since $\langle S,I_i\rangle=R$,
it follows that $S^{-1}(R/I_i)\simeq R/I_i$.
Thus $S^{-1}(A/I_iA)\simeq A/I_iA$ and $S^{-1}(P/I_iP)\simeq P/I_iP$, $S^{-1}(Q/I_iQ)\simeq Q/I_iQ$.
Since $R/I\simeq \prod_{1\le i\le n} R/I_i$, we get $P/IP\simeq \bigoplus_{1\le i\le n} P/I_iP$,
$S^{-1}P/S^{-1}IP\simeq \bigoplus_{1\le i\le n} S^{-1}P/S^{-1}I_iP$ and similarly for $Q$ and $S^{-1}Q$.

Now we have $P/IP\simeq \bigoplus_{1\le i\le n} P/I_iP\simeq \bigoplus_{1\le i\le n} S^{-1}(P/I_iP)\simeq S^{-1}P/S^{-1}IP$
and $Q/IQ\simeq S^{-1}Q/S^{-1}IQ$.
Because we have shown that $S^{-1}P/S^{-1}IP\simeq S^{-1}Q/S^{-1}IQ$,
we find that $P/IP\simeq Q/IQ$. If $R$ is semilocal with $I \subset\fn{rad}(R)$, then $P \simeq Q$ by Lemma \ref{l3.5}.

In summary, define $S=R\backslash \bigcup_{1\le i\le n}\c{M}_i$, $J=\bigcap_{1\le i\le n} \c{M}_i$
and consider the $S^{-1}A$-projective modules $S^{-1}P$ and $S^{-1}Q$. In the next paragraph, we will show that the assumption $KP \simeq KQ$ carries over to the ring $S^{-1}A$.

Let $K_S$ be the total quotient ring of $S^{-1}R$ and let
$\phi: R \to S^{-1}R$ be the canonical ring homomorphism. For any element $a \in R$, if $a$ is not a zero-divisor, then $\phi(a)$ is not a zero-divisor in $S^{-1}R$. Thus the map $\phi$ may be extended to $K \to K_S$. It follows that $K_S \otimes_{S^{-1}R} S^{-1}P \simeq K_S \otimes_R P \simeq K_S \otimes_K KP$. Similarly, $K_S \otimes_{S^{-1}R} S^{-1}Q \simeq K_S \otimes_K KQ$. Since $KP \simeq KQ$ by assumption, it follows that $K_S \otimes_{S^{-1}R} S^{-1}P$ is also isomorphic to $K_S \otimes_{S^{-1}R} S^{-1}Q$.

It remains to prove that $S^{-1}P/S^{-1}JP\simeq S^{-1}Q/S^{-1}JQ$.

\medskip
Step 2.
To simplify the notation, we may assume,
without loss of generality, that $R$ is a semilocal ring with maximal ideals $\c{M}_1,\ldots,\c{M}_n$ and $J=\bigcap_{1\le i\le n}\c{M}_i$.
Let $K$ be the total quotient ring of $R$. If $KP \simeq KQ$, we will prove that $P/JP \simeq Q/JQ$.

Define $S_i=R\backslash \c{M}_i$ for $1\le i\le n$.
Let the total quotient ring of $S_i^{-1}R$ be $K_i$ and $\phi_i: R \to S_i^{-1}R$ be the canonical ring homomorphism. As in the last two paragraph of Step 1, the map $\phi_i$ may be extended to a map $K \to K_i$ and we obtain an isomorphism of $K_i \otimes_{S_i^{-1}R} S_i^{-1}P$ with $K_i \otimes_{S_i^{-1}R} S_i^{-1}Q$.

Now we may apply Theorem \ref{t3.7} to the projective modules $S_i^{-1}P$ and $S_i^{-1}Q$ over the algebra $S_i^{-1}A$ for $1\le i\le n$.

We find that $S_i^{-1}P\simeq S_i^{-1}Q$.
Thus $S_i^{-1}P/S_i^{-1}JP\simeq S_i^{-1}Q/S_i^{-1}JQ$ for $1\le i\le n$.

The remaining proof is analogous to that in Step 1.
Note that $R/\c{M}_i\simeq S_i^{-1}(R/\c{M}_i)$.
Thus $P/JP\simeq \bigoplus_{1\le i\le n} P/\c{M}_iP \simeq \bigoplus_{1\le i\le n} S_i^{-1}P/S_i^{-1}\c{M}_iP\simeq
\bigoplus_{1\le i\le n} S_i^{-1}P/S_i^{-1}JP$ $\simeq \bigoplus_{1\le i\le n} S_i^{-1}Q/S_i^{-1}JQ \simeq\cdots\simeq Q/JQ$.
\end{proof}

\begin{remark}
When $R$ is a semilocal ring and $A$ is a maximal $R$-order,
an analogous result of Theorem \ref{t3.7} can be found in \cite[page 102, Corollary]{Sw3}.
\end{remark}

The following theorem is communicated to us by S. Endo. It provides a generalization of Theorem \ref{t1.5} (with the aid of Theorem \ref{t1.3}).

\begin{theorem} \label{t4.9}
Let $(R,\c{M})$ be a commutative artinian local ring, $A$ be an $R$-algebra which is a finitely generated free $R$-module. Then the Cartan map $K_0(A)\to G_0(A)$ is injective if and only if so is the Cartan map $K_0(A/\c{M}A)\to G_0(A/\c{M}A)$.
\end{theorem}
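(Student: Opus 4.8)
The plan is to show that, under the canonical identifications induced by reduction modulo $\c{M}$, the Cartan map of $A$ agrees with that of $A/\c{M}A$ up to multiplication by a fixed positive integer; the desired equivalence is then immediate. First, the preliminaries. Since $R$ is artinian and $A$ is module-finite over $R$, the ring $A$ is left artinian, so the Cartan maps $c_A\colon K_0(A)\to G_0(A)$ and $c_{A/\c{M}A}\colon K_0(A/\c{M}A)\to G_0(A/\c{M}A)$ of Definition \ref{d2.4} are both defined. As $\c{M}=\fn{rad}(R)$ is nilpotent, say $\c{M}^s=0$, and $R$ lies in the centre of $A$, the ideal $\c{M}A$ is nilpotent (indeed $(\c{M}A)^s=\c{M}^sA=0$), so $A$ is $\c{M}A$-complete, and moreover $\c{M}A\subseteq\fn{rad}(A)$ (the reasoning of Step 2 of the proof of Theorem \ref{t1.5}). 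Hence Lemma \ref{l2.2} supplies an isomorphism $\iota\colon K_0(A)\to K_0(A/\c{M}A)$ with $\iota([P])=[P/\c{M}P]$. Also $\c{M}A$ annihilates every simple $A$-module, so the simple $A$-modules and the simple $A/\c{M}A$-modules coincide; since by Lemma \ref{l2.3} both $G_0(A)$ and $G_0(A/\c{M}A)$ are free abelian on the classes of their simple modules, the natural homomorphism $G_0(A/\c{M}A)\to G_0(A)$ (regarding an $A/\c{M}A$-module as an $A$-module) is an isomorphism; write $\gamma\colon G_0(A)\to G_0(A/\c{M}A)$ for its inverse.

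The crux is the identity
\[
\gamma\circ c_A=\ell\cdot\bigl(c_{A/\c{M}A}\circ\iota\bigr),\qquad
\ell:=\sum_{0\le j<s}\dim_{R/\c{M}}\bigl(\c{M}^j/\c{M}^{j+1}\bigr)=\fn{length}_R(R),
\]
$\ell$ being a positive integer (its $j=0$ summand alone is $1$). To obtain it I would fix a finitely generated $A$-projective module $P$ and use the filtration $P\supset\c{M}P\supset\c{M}^2P\supset\cdots\supset\c{M}^sP=0$. Each quotient $\c{M}^jP/\c{M}^{j+1}P$ is killed by $\c{M}A$, hence is an $A/\c{M}A$-module, and by definition of the Cartan map $c_A([P])=[P]=\sum_j[\c{M}^jP/\c{M}^{j+1}P]$ in $G_0(A)$. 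Since $A$ is $R$-free, the direct summand $P$ of a free $A$-module is $R$-projective, hence $R$-flat; tensoring $0\to\c{M}^{j+1}\to\c{M}^j\to\c{M}^j/\c{M}^{j+1}\to0$ over $R$ with $P$ then identifies $\c{M}^jP/\c{M}^{j+1}P$ with $P\otimes_R(\c{M}^j/\c{M}^{j+1})\cong(P/\c{M}P)^{(e_j)}$ as $A$-modules, where $e_j=\dim_{R/\c{M}}(\c{M}^j/\c{M}^{j+1})$. Summing over $j$ gives $c_A([P])=\ell\cdot[P/\c{M}P]$ in $G_0(A)$. Applying $\gamma$ and observing that $\gamma([P/\c{M}P])$ is the class of the $A/\c{M}A$-module $P/\c{M}P$ in $G_0(A/\c{M}A)$, that is, $c_{A/\c{M}A}([P/\c{M}P])=c_{A/\c{M}A}(\iota([P]))$, gives the identity on the generators $[P]$ of $K_0(A)$, hence on all of $K_0(A)$.

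Granting the identity, the conclusion is formal. By Lemma \ref{l2.3} the group $G_0(A/\c{M}A)$ is free abelian of finite rank, hence torsion-free, so multiplication by the positive integer $\ell$ is injective on it. Using that $\iota$ and $\gamma$ are isomorphisms we then get: $c_A$ is injective $\iff$ $\gamma\circ c_A$ is injective $\iff$ $\ell\cdot(c_{A/\c{M}A}\circ\iota)$ is injective $\iff$ $c_{A/\c{M}A}\circ\iota$ is injective $\iff$ $c_{A/\c{M}A}$ is injective, which is the stated equivalence.

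The step I expect to be the main obstacle is the isomorphism $\c{M}^jP/\c{M}^{j+1}P\cong(P/\c{M}P)^{(e_j)}$ of $A$-modules: this is precisely where the hypothesis that $A$, and therefore every finitely generated $A$-projective module, is $R$-free, hence $R$-flat, is genuinely used; without flatness these subquotients need not even be $A/\c{M}A$-projective, and the tidy relation between the two Cartan maps breaks down. Everything else is routine bookkeeping with the d\'evissage and idempotent-lifting facts collected in \S2.
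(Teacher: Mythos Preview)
Your proof is correct and follows essentially the same route as the paper's: both establish the identity $[P]=\ell\,[P/\c{M}P]$ in $G_0(A)$ with $\ell=\fn{length}_R(R)$ by filtering $P$ via an $R$-filtration and using that $P$ is $R$-flat, then conclude by comparing Cartan matrices (equivalently, your map identity $\gamma\circ c_A=\ell\cdot(c_{A/\c{M}A}\circ\iota)$). The only cosmetic difference is that the paper refines the $\c{M}$-adic filtration to a full composition series $R=J_0\supset J_1\supset\cdots\supset J_t=0$ with each $J_{i-1}/J_i\simeq R/\c{M}$, so every successive quotient of $P$ is a single copy of $P/\c{M}P$, whereas you use the $\c{M}$-adic filtration directly and pick up the multiplicities $e_j=\dim_{R/\c{M}}(\c{M}^j/\c{M}^{j+1})$; since $\sum_j e_j=t=\ell$, the two computations coincide.
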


\begin{proof}
Since $R$ satisfies the ACC condition and the DCC condition on ideals, we can find a filtration of ideals of $R$ as follows : $R=J_0 \supset J_1 \supset \ldots \supset J_t =0$ where $t$ is some positive integer and $J_{i-1}/J_i \simeq R/\c{M}$ for $1 \le i \le t$.

As $A$ is a free $R$-module, every finitely generated $A$-projective module $P$ is also $R$-free. Tensor the exact sequence $0 \to J_i \to J_{i-1} \to R/\c{M} \to 0$ with $P$ over $R$. Note that $J_i \otimes_R P \simeq J_i P$ as $A$-modules (because we may tensor the injection $0 \to J_i \to R$ with $P$). It follows that we obtain a filtration of $A$-modules $P=P_0 \supset P_1=J_1 P \supset \ldots \supset P_t=J_t P =0$ where $P_{i-1}/P_i \simeq P/\c{M}P$. We conclude that $[P]=t[P/\c{M}P]$ in $G_0(A)$.

By Lemma \ref{l2.3}, find projective $A$-modules $P_1,P_2,\ldots,P_n$ and
simple $A$-modules $M_1,\allowbreak M_2,\ldots,M_n$ such that $K_0(A)=\bigoplus_{1\le i\le n} \bm{Z}\cdot [P_i]$ and
$G_0(A)=\bigoplus_{1\le i\le n}\bm{Z}\cdot [M_i]$. The same simple $A$-modules $M_i$ satisfies that $G_0(A/\c{M}A)=\bigoplus_{1\le i\le n}\bm{Z}\cdot [M_i]$. Moreover, $K_0(A/\c{M}A)=\bigoplus_{1\le i\le n} \bm{Z}\cdot [P_i/\c{M}P]$ by Lemma \ref{l2.2}.

Now if $[P_i/\c{M}P_i]=\sum_{1 \le j \le n} a_{ij}[M_j]$ in $G_0(A/\c{M}A)$ where $a_{ij}$ are some integers, then $[P_i]=\sum_{1 \le j \le n} ta_{ij}[M_j]$ in $G_0(A)$ (note that $G_0(A/\c{M}A)$ is naturally isomorphic to $G_0(A)$ by \cite[page 94, Theorem 3.4]{Sw2}). Thus the determinant of the Cartan matrix $(a_{ij})_{1 \le i,j \le n}$ is non-zero if and only if so is that of $(ta_{ij})_{1 \le i,j \le n}$.
\end{proof}

The following theorem of Rim is a generalization of Theorem \ref{t3.7}. However, its proof was omitted in \cite{Ri2}. For the convenience of the readers, we supply a proof of it as an application of Theorem \ref{t3.9} and Theorem \ref{t4.9}.

\begin{theorem}[{Rim \cite[Theorem 7]{Ri2}}] \label{t3.8}
Let $R$ be a commutative noetherian ring with total quotient ring $K$,
$A$ be an $R$-algebra which is a finitely generated $R$-projective module.
Suppose that $I$ is an ideal of $R$ such that $R/I$ is artinian.
Assume that the Cartan map $c:K_0(A/IA)\to G_0(A/IA)$ is injective.
If $P$ and $Q$ are finitely generated $A$-projective modules with $KP\simeq KQ$, then $P/IP\simeq Q/IQ$.
\end{theorem}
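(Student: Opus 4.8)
The plan is to reduce the hypothesis of this theorem to that of Theorem \ref{t3.9} and then invoke Theorem \ref{t3.9} verbatim. Since $R/I$ is artinian, it decomposes as a finite product $R/I\simeq \prod_{1\le i\le n} R/I_i$, where $\c{M}_1,\ldots,\c{M}_n$ are the maximal ideals of $R$ containing $I$ and $I_i$ is the $\c{M}_i$-primary component in the primary decomposition $I=\bigcap_{1\le i\le n} I_i$. Consequently $A/IA\simeq \prod_{1\le i\le n} A/I_iA$, and since $K_0$ and $G_0$ carry a finite product of rings to the finite direct sum of the corresponding groups, with the Cartan map respecting this splitting, the map $c:K_0(A/IA)\to G_0(A/IA)$ is injective if and only if each $c:K_0(A/I_iA)\to G_0(A/I_iA)$ is injective.

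Next I would pass from each $\c{M}_i$-primary quotient to the residue field. Each $R/I_i$ is a commutative artinian local ring with maximal ideal $\c{M}_i/I_i$, and because $A$ is a finitely generated projective $R$-module, $A/I_iA$ is a finitely generated projective, hence free, module over the local ring $R/I_i$. Thus Theorem \ref{t4.9} applies to the $R/I_i$-algebra $A/I_iA$ and tells us that $c:K_0(A/I_iA)\to G_0(A/I_iA)$ is injective if and only if $c:K_0(A/\c{M}_iA)\to G_0(A/\c{M}_iA)$ is injective, using the identification $(A/I_iA)\big/(\c{M}_i/I_i)(A/I_iA)\simeq A/\c{M}_iA$. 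Combining the two steps, the assumption that $c:K_0(A/IA)\to G_0(A/IA)$ is injective is equivalent to the assumption in Theorem \ref{t3.9} that $c_i:K_0(A/\c{M}_iA)\to G_0(A/\c{M}_iA)$ is injective for all $1\le i\le n$.

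With this equivalence established, Theorem \ref{t3.9} applied to the $A$-projective modules $P$ and $Q$, which satisfy $KP\simeq KQ$, gives $P/IP\simeq Q/IQ$, which is the desired conclusion.

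The only step requiring any thought is the middle one: recognizing that Theorem \ref{t4.9} is precisely the device needed to trade the Cartan map over the (possibly non-reduced) artinian local ring $R/I_i$ for the Cartan map over its residue field, and verifying that the freeness hypothesis of Theorem \ref{t4.9} is automatically met because a finitely generated projective module over a commutative artinian local ring is free. Everything else is the Chinese-remainder-type bookkeeping already employed in the proof of Theorem \ref{t3.9}, together with the elementary fact that $K_0$ and $G_0$ are additive on finite products of rings.
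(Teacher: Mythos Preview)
Your proposal is correct and follows essentially the same approach as the paper's proof: decompose $A/IA$ via the primary decomposition of $I$, use additivity of $K_0$ and $G_0$ on finite products to reduce to each $A/I_iA$, invoke Theorem~\ref{t4.9} (noting that $A/I_iA$ is free over the artinian local ring $R/I_i$) to pass to the residue-field Cartan maps, and then apply Theorem~\ref{t3.9}. The paper's argument is identical in structure and in the key lemmas used.
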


 \begin{proof}
Write the primary decomposition of $I$ as $I=\bigcap_{1\le i\le n}I_i$ where $I_i$ is an $\c{M}_i$-primary ideal and each $\c{M}_i$ is a maximal ideal of $R$. Then $A/IA \simeq \prod_{1\le i\le n}A/I_iA$. It follows that this isomorphism induces isomorphisms $K_0(A/IA)\simeq \oplus_{1\le i\le n}K_0(A/I_iA)$ and $G_0(A/IA)\simeq \oplus_{1\le i\le n}G_0(A/I_iA)$. Note that, for $1 \le i \le n$, $A/I_iA$ is a $R/I_i$-free module and the Cartan map $K_0(A/I_iA)\to G_0(A/I_iA)$ is injective. Apply Theorem \ref{t4.9}. We find that the Cartan map $K_0(A/\c{M}_i A) \to G_0(A/\c{M}_i A)$ is injective. Now we may apply Theorem \ref{t3.9} to finish the proof.
\end{proof}

\begin{example} \label{e4.4}

With the aid of Theorem \ref{t3.7} we will show that Theorem \ref{tB} of Example \ref{e3.6} implies Theorem \ref{t1.2}. 
Let $A$, $R$ and $d$ be given as in Theorem \ref{t1.2} and $P$ be a finitely generated $A$-projective module. Suppose $KP$ is free of rank $r$.
For any maximal ideal $\c{M}$ in $R$, consider $P_{\c{M}}$. Now the (new!) base ring is the local ring $R_{\c{M}}$. We will compare $P_{\c{M}}$ with $P'=A_{\c{M}}^{(r)}$.

Let $\phi: R \to R_{\c{M}}$ be the canonical ring homomorphism, and let $K_{\c{M}}$ be the total quotient ring of $R_{\c{M}}$. For any element $a \in R$, if $a$ is not a zero-divisor, then $\phi(a)$ is not a zero-divisor in $R_{\c{M}}$. Thus the map $\phi$ may be extended to $K \to K_{\c{M}}$. It follows that $K_{\c{M}} \otimes_{R_{\c{M}}} P_{\c{M}} \simeq K_{\c{M}} \otimes_R P \simeq K_{\c{M}} \otimes_K KP$ is a free module and $K_{\c{M}} \otimes_{R_{\c{M}}} P_{\c{M}}$ is isomorphic to $K_{\c{M}} \otimes_{R_{\c{M}}} P'$.

 Apply Theorem \ref{t3.7}. We find that $P_{\c{M}}\simeq P'=A_{\c{M}}^{(r)}$. Since $P$ is locally free, we may apply Theorem \ref{tB} of Example \ref{e3.6} so that $P\simeq A\oplus Q$ where $Q$ is locally free of rank $r-1$ if $r \ge d+1$ as in Example \ref{e3.6}. The proof of Theorem \ref{t1.2} is finished by induction on $r$.
\end{example}

In general, a finitely generated $R\pi$-projective module may be written as a direct sum of indecomposable $R\pi$-projective modules. The following lemma tells what an indecomposable $R\pi$-projective module looks like in case $\mid \pi \mid$ is invertible in $R$.

\begin{lemma} \label{l4.5}
Let $R$ be a Dedekind domain with quotient field $K$, $\pi$ be a finite group such that $|\pi|$ is invertible in $R$.
If $P$ is a finitely generated indecomposable $R\pi$-projective module, then $P$ is isomorphic to a projective ideal of $R\pi$; moreover, there is some projective ideal $\c{A}$ generated by a primitive idempotent of $R\pi$ such that $P$ and $\c{A}$ belong to the same genus.
\end{lemma}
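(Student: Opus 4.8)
The plan is to reduce the statement to the structure theory of maximal orders over a Dedekind domain. Since $|\pi|$ is invertible in $R$, the element $\frac{1}{|\pi|}\sum_{g\in\pi}g\otimes g^{-1}$ is a separability idempotent, so $R\pi$ is a separable $R$-algebra; hence $R\pi$ is a maximal $R$-order in the semisimple algebra $K\pi$, and, the centre of a separable algebra being again separable (hence a finite product of Dedekind domains over $R$), the primitive central idempotents $e_1,\dots,e_s$ of $K\pi$ already lie in $R\pi$. Writing $K\pi=\prod_{i=1}^{s}B_i$ with $B_i=e_iK\pi=M_{n_i}(D_i)$ simple ($D_i$ a division algebra), we obtain a block decomposition $R\pi=\prod_{i=1}^{s}\Lambda_i$ with $\Lambda_i=e_iR\pi$ a maximal $R$-order in $B_i$, and, by the structure of maximal orders in a simple algebra, $\Lambda_i\cong M_{n_i}(\Gamma_i)$ for a maximal order $\Gamma_i$ in $D_i$ over the Dedekind domain $Z_i:=Z(\Lambda_i)$ (see \cite{CR}). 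In particular each $\Lambda_i$ is hereditary, so every finitely generated $R\pi$-projective module is automatically an $R\pi$-lattice.

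Now let $P$ be indecomposable projective. Since the $e_i$ lie in $R\pi$ and $P=\bigoplus_i e_iP$, indecomposability forces $P=e_iP$ for a single $i$, so $P$ is an indecomposable projective $\Lambda_i$-module. Under the Morita equivalence $\Lambda_i\sim\Gamma_i$, $P$ corresponds to an indecomposable projective $\Gamma_i$-lattice $\c{I}$; by the Steinitz-type theorem for projective lattices over the maximal order $\Gamma_i$, together with the fact that $\Gamma_i$, being an order in a division algebra, has no non-trivial idempotents and hence is indecomposable over itself, $\c{I}$ has $\Gamma_i$-rank $1$ and is isomorphic to a nonzero left ideal of $\Gamma_i$. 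Transporting this back through Morita, $P$ is isomorphic to the set of columns of $M_{n_i}(\Gamma_i)=\Lambda_i$ with entries in that ideal, a projective left ideal of $\Lambda_i$, and hence a projective left ideal of $R\pi$; this proves the first assertion. Note also that $K\c{I}\simeq D_i$, so $KP$ is the unique simple $B_i$-module.

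For the genus statement, choose a primitive idempotent $f_i$ of $\Lambda_i$ (a matrix unit of $M_{n_i}(\Gamma_i)$); it is also a primitive idempotent of $R\pi$, and $\c{A}:=R\pi f_i=\Lambda_i f_i$ is a projective ideal of $R\pi$ generated by a primitive idempotent, with $K\c{A}=B_if_i$ again the simple $B_i$-module, so $KP\simeq K\c{A}$. To see that $P$ and $\c{A}$ lie in the same genus, fix a maximal ideal $\c{M}$ of $R$. Then $(R\pi)_{\c{M}}=\prod_i(\Lambda_i)_{\c{M}}$ with $(\Lambda_i)_{\c{M}}=M_{n_i}\big((\Gamma_i)_{\c{M}}\big)$, and $(\Gamma_i)_{\c{M}}=\Gamma_i\otimes_R R_{\c{M}}$ is a maximal order in $D_i$ over $(Z_i)_{\c{M}}=Z_i\otimes_R R_{\c{M}}$, which is a semilocal Dedekind domain, hence a principal ideal domain. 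A maximal order over a semilocal Dedekind domain is a principal ideal ring, so every nonzero left ideal of $(\Gamma_i)_{\c{M}}$ --- equivalently, every rank-$1$ $(\Gamma_i)_{\c{M}}$-lattice --- is free; consequently a finitely generated projective $(\Lambda_i)_{\c{M}}$-module, and hence a finitely generated projective $(R\pi)_{\c{M}}$-module, is determined up to isomorphism by its $K$-span. Since $K\otimes_{R_{\c{M}}}P_{\c{M}}\simeq KP\simeq K\c{A}\simeq K\otimes_{R_{\c{M}}}\c{A}_{\c{M}}$, it follows that $P_{\c{M}}\simeq\c{A}_{\c{M}}$; as $\c{M}$ was arbitrary, $P$ and $\c{A}$ belong to the same genus.

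The main obstacle is assembling the maximal-order machinery correctly, and two points call for care. First, the block decomposition $R\pi=\prod\Lambda_i$ genuinely uses $|\pi|^{-1}\in R$, via separability of $R\pi$ and of its centre, which is what makes the central idempotents of $K\pi$ descend into $R\pi$. Second, in the local step one must notice that localizing $R\pi$ at $\c{M}$ does \emph{not} split $\Gamma_i$ into a product of local rings: its centre $(Z_i)_{\c{M}}$ merely becomes semilocal, staying a domain, so one cannot appeal to semiperfectness (which in fact can fail for a maximal order over a non-complete semilocal Dedekind domain) and must instead invoke the principal-ideal-ring property of maximal orders over a semilocal Dedekind domain.
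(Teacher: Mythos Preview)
Your strategy is different from the paper's and mostly sound, but there is a genuine gap: the claim that each block $\Lambda_i = e_i R\pi$ is isomorphic to a full matrix ring $M_{n_i}(\Gamma_i)$ is not true over a general Dedekind domain. The structure theory of maximal orders yields only $\Lambda_i \simeq \fn{End}_{\Gamma_i}(N)$ for some $\Gamma_i$-lattice $N$ with $KN \simeq D_i^{\,n_i}$; the lattice $N$ need not be free (already for $D_i=K$ and $n_i=2$ there are maximal orders in $M_2(K)$ not of the form $M_2(R)$). You rely on the matrix form twice --- to transport $\c{I}$ back to a ``column ideal'' of $\Lambda_i$, and to exhibit the primitive idempotent $f_i$ as a matrix unit --- and both uses collapse without it. The approach can be repaired (decompose $N$ via Steinitz as $I_1\oplus\cdots\oplus I_{n_i}$, take $f_i$ to be the projection onto one summand, and verify that $\Lambda_i f_i$ has the required properties), but that work is missing from your write-up, and the explicit ``columns with entries in $\c{I}$'' picture is no longer available.

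By contrast the paper avoids the block--Morita machinery altogether. Since $R\pi$ is a maximal order it is hereditary, and Kaplansky's theorem on projectives over hereditary rings writes $P$ as a direct sum of left ideals; indecomposability gives the first assertion immediately. For $\c{A}$, the paper takes the minimal left ideal $V\subset K\pi$ with $V\simeq KP$, sets $\c{A}:=R\pi\cap V$, and observes that $R\pi/\c{A}$ is $R$-torsion-free, hence an $R\pi$-lattice, hence projective; the sequence $0\to\c{A}\to R\pi\to R\pi/\c{A}\to 0$ then splits, so $\c{A}$ is generated by an idempotent, primitive because $K\c{A}=V$ is simple. The genus claim follows in one line from \cite[page 643, Proposition 31.2]{CR} (for maximal orders, $KM\simeq KN$ already forces $M$ and $N$ into the same genus), or equivalently from Theorem~\ref{t3.7}; no local analysis of $(\Gamma_i)_{\c{M}}$ is needed.
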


\begin{proof}
Note that $R\pi$ becomes a maximal $R$-order because $|\pi|$ is invertible in $R$ \cite[page 582]{CR}.
As such, it is known that (i) $R\pi$ is left hereditary; (ii) a finitely generated $R\pi$-module $P$ is $R\pi$-projective if and only if it is an $R\pi$-lattice; (iii) the module $P$ is an indecomposable $R\pi$-projective module if and only if $KP$ is a simple $K\pi$-module \cite[page 565]{CR}.

Now we come to the proof. By a theorem of Kaplansky every projective module over a left hereditary ring is a direct sum of projective ideals (see \cite[page 13, Theorem 5.3]{CE}). Since the projective module $P$ we consider is indecomposable, it is isomorphic to a projective ideal of $R\pi$. It remains to find some projective ideal $\c{A}$ such that $\c{A}$ is a direct summand of $R\pi$ satisfying that $P$ and $\c{A}$ belong to the same genus.

Since $KP$ is a simple $K\pi$-module, it is isomorphic to a minimal left ideal $V$ of $K\pi$ by the Artin-Wedderburn Theorem. Since $K\pi$ is semi-simple, write $K\pi=V \oplus V'$ where $V'$ is another left ideal of $K\pi$.

From the embedding $R\pi \to K\pi$, define $\c{A}= R\pi \cap V$ and define $\c{A}'$ by the exact sequence $0 \to \c{A} \to R\pi \to \c{A}' \to 0$. Hence $K\c{A} = V$ and $\c{A}'$ is $R$-torsion free. It follows that $\c{A}'$ is $R\pi$-projective and the exact sequence $0 \to \c{A} \to R\pi \to \c{A}' \to 0$ splits. Thus $\c{A}$ is generated by an idempotent element $u$ of $R\pi$. This idempotent element $u$ is primitive because $\c{A}$ is indecomposable (remember that $K\c{A} = V$ which is a simple $K\pi$-module).

Note that, if $i: P \to R\pi \cap V$ ($=\c{A}$) is the embedding of $P$ via Kaplansky's Theorem and $KP \simeq V$ (see the proof of \cite[page 13, Theorem 5.3]{CE}), it is not true in general that $i(P)$ should be equal to $\c{A}$.

Finally we will show that $P$ and $\c{A}$ belong to the same genus. Both $KP$ and $K\c{A}$ are isomorphic to $V$. Because $R\pi$ is a maximal order, we may apply \cite[page 643, Proposition 31.2]{CR} to finish the proof. Note that this result may be proved alternatively by applying Theorem \ref{t3.7}.
\end{proof}

\begin{remark}
In the above lemma, $K\c{A}$ is not free if $|\pi| > 1$. In case $KP$ is free, the following result is known: Let $R$ be a Dedekind domain with quotient field $K$ and $\pi$ be a finite group. If $\gcd\{|\pi|,\fn{char}R\}=1$ and $P$ is a finitely generated $R\pi$-projective module such that $KP$ is free, then $P\simeq F\oplus \c{A}$ where $F$ is a free module and $\c{A}$ is some projective ideal (see \cite[Theorem 7.2]{Sw1}).
\end{remark}

\begin{lemma} [Villamayor \cite{Vi}] \label{l4.7}
Let $\pi$ be a finite group and $R$ be a commutative ring such that rad$(R)=0$ and $|\pi|$ is a unit in $R$. Then
rad$(R\pi)=0$.
\end{lemma}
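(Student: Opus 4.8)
The plan is to reduce the statement to Maschke's theorem applied over each residue field of $R$, using that $\fn{rad}(R)=0$ says exactly that $\bigcap_{\c{M}}\c{M}=0$, the intersection taken over all maximal ideals $\c{M}$ of $R$. The only feature of $R\pi$ that enters the argument is that it is a free $R$-module with basis $\pi$.

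First I would fix a maximal ideal $\c{M}$ of $R$ and put $k=R/\c{M}$, a field. Tensoring $R\pi$ over $R$ with the projection $R\to k$ gives a surjective ring homomorphism $R\pi\to R\pi/\c{M}R\pi\simeq k\pi$; freeness of $R\pi$ over $R$ is what identifies its kernel as $\c{M}R\pi$. Since $|\pi|$ is a unit in $R$ it is a unit in $k$, hence $\fn{char}k\nmid|\pi|$, and Maschke's theorem makes $k\pi$ semisimple, so $\fn{rad}(k\pi)=0$. Now I invoke the elementary fact that a surjective ring homomorphism $f\colon A\to B$ satisfies $f(\fn{rad}(A))\subseteq\fn{rad}(B)$: if $u(1-ax)=1$ in $A$ then $f(u)\bigl(1-f(a)f(x)\bigr)=1$ in $B$, and $f(a)$ runs over all of $B$ by surjectivity. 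Applied to $R\pi\to k\pi$, this yields $\fn{rad}(R\pi)\subseteq\c{M}R\pi$ for every maximal ideal $\c{M}$ of $R$.

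Finally I would intersect over all $\c{M}$ and use freeness once more: an element $\sum_{g\in\pi}a_g g$ of $R\pi$ lies in $\c{M}R\pi$ if and only if each $a_g\in\c{M}$, whence $\bigcap_{\c{M}}\c{M}R\pi=\bigl(\bigcap_{\c{M}}\c{M}\bigr)R\pi=\fn{rad}(R)\cdot R\pi=0$, and therefore $\fn{rad}(R\pi)=0$. There is no real obstacle here; the argument is a short reduction plus Maschke. The two points deserving a moment of care are the behaviour of the Jacobson radical under a surjection — used here to bound $\fn{rad}(R\pi)$ from above, rather than the more familiar inclusion $\fn{rad}(R)\cdot R\pi\subseteq\fn{rad}(R\pi)$ — and the interchange of an arbitrary intersection of the ideals $\c{M}R\pi$ with the base change $(-)\otimes_R R\pi$; both are immediate from $R\pi$ being $R$-free on $\pi$.
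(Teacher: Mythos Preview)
Your argument is correct and self-contained. The paper, however, does not actually supply a proof of this lemma: it merely refers the reader to Villamayor's original article and to Passman's book, noting that the arguments there go through once $R$ is any commutative ring in which $|\pi|$ is a unit. Your route---reduce modulo each maximal ideal $\c{M}$ of $R$, invoke Maschke for $(R/\c{M})\pi$, use the behaviour of the Jacobson radical under a ring surjection to place $\fn{rad}(R\pi)$ inside every $\c{M}R\pi$, and then intersect using the freeness of $R\pi$ over $R$---is the standard clean argument and gives a complete proof where the paper offers only citations. The two careful points you flag (the radical under surjections, and the interchange of the intersection with base change along a free module) are exactly the places one should pause, and you have handled both correctly.
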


\begin{proof}
This theorem is proved essentially in \cite[page 626, Theorem 3]{Vi}; as noted in \cite[page 627, Remark 1]{Vi}, if $R$ is a commutative ring such that $|\pi|$ is a unit in $R$, the proof of Theorem 3 in \cite[page 626]{Vi} remains valid (as $\pi$ is a finite group). Be aware that, according to the convention of \cite[page 621]{Vi}, a ring $A$ is called semisimple if $\fn{rad}(A)=0$. Villamayor's Theorem can be found also in \cite[page 278]{Pa}; it is easy to check that the proof of this theorem in \cite[page 278]{Pa} works as well so long as $R$ is any commutative ring such that $|\pi|$ is a unit in $R$ (in other words, the assumption that $R$ is a field may be relaxed).
\end{proof}

\begin{example} \label{e4.8}
Let $\pi$ be a finite group. Choose a Dedekind domain $R$ such that $R$ is not semilocal and $|\pi|$ is a unit in $R$. Then rad$(R)=0$. Thus rad$(R\pi)=0$ by Villamayor's Theorem. It follows that $R\pi/$rad$(R\pi) \simeq R\pi$ is not left artinian. Hence $R\pi$ is not semiperfect \cite[page 346]{La2}. By Theorem (25.3) of \cite[page 371]{La2}, any finitely generated indecomposable projective module over a semiperfect ring is isomorphic to a projective ideal generated by a primitive idempotent (compare this result with Lemma \ref{l4.5}).
\end{example}

\begin{lemma} \label{l4.6}
Let $R$ be a commutative noetherian integral domain with $\fn{char}R=p>0$, $\pi$ be a finite group such that $p\mid |\pi|$. Assume that the $p$-Sylow subgroup $\pi_p$ is normal in $\pi$. Write $\pi'=\pi/\pi_p$.
\leftmargini=8mm
\begin{enumerate}
\item[{\rm (1)}]
Define a right ideal $I:=\sum_{\lambda \in \pi_p}(\lambda -1)\cdot R\pi$. Then $I$ is a nilpotent two-sided ideal of $R\pi$, $R\pi/I \simeq R\pi'$, and rad$(R\pi)=\langle I, \fn{rad}(R) \rangle$.
\item[{\rm (2)}]
There is a one-to-one correspondence between the isomorphism classes of finitely generated $R\pi$-projective modules
and the isomorphism classes of finitely generated $R\pi'$-projective modules given by $P\sto P/IP$ where $I$ is defined in {\rm (1)}. Note that $|\pi'|$ is a unit in $R$.
\end{enumerate}
\end{lemma}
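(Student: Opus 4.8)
The plan is to deduce everything from two facts about the ideal $I$: that it is the kernel of the natural surjection $\varepsilon\colon R\pi\to R\pi'$ induced by $\pi\to\pi'=\pi/\pi_p$, and that it is nilpotent.

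For part (1) I would first identify $I$. Since $\pi_p\triangleleft\pi$, the identity $g(\lambda-1)=(g\lambda g^{-1}-1)g$ (for $g\in\pi$, $\lambda\in\pi_p$) shows that each $g(\lambda-1)$ lies in $I$; grouping the support of an element of $\ker\varepsilon$ along the cosets of $\pi_p$ then yields $\ker\varepsilon\subseteq I$, while $I\subseteq\ker\varepsilon$ is immediate. Hence $I=\ker\varepsilon$ is a two-sided ideal and $R\pi/I\simeq R\pi'$. For nilpotency, note $I=J_p\cdot R\pi$ where $J_p$ is the augmentation ideal of $R\pi_p$, and the same identity gives $R\pi\cdot J_p\subseteq J_p\cdot R\pi$, so $I^{\,m}\subseteq J_p^{\,m}\cdot R\pi$ for every $m$; it therefore suffices that $J_p$ be nilpotent, which holds because $\fn{char}R=p$ and $\pi_p$ is a finite $p$-group — for instance, induct on $|\pi_p|$ using a central $\sigma\in\pi_p$ of order $p$, for which $(\sigma-1)^{p}=\sigma^{p}-1=0$ and $R\pi_p/(\sigma-1)R\pi_p\simeq R(\pi_p/\langle\sigma\rangle)$.

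Consequently $I\subseteq\fn{rad}(R\pi)$, and since $\fn{rad}(R)\cdot R\pi\subseteq\fn{rad}(R\pi)$ by \cite[page 74, Corollary 5.9]{La2}, we get $\langle I,\fn{rad}(R)\rangle=I+\fn{rad}(R)R\pi\subseteq\fn{rad}(R\pi)$. For the opposite inclusion I would observe that applying $\varepsilon$ gives $R\pi/\langle I,\fn{rad}(R)\rangle\simeq (R/\fn{rad}(R))\pi'$. As $\pi_p$ is a Sylow subgroup, $p\nmid[\pi:\pi_p]=|\pi'|$, so a B\'ezout relation $a|\pi'|+bp=1$ with $a,b\in\bm{Z}$ together with $p\cdot 1_R=0$ shows $|\pi'|$ is a unit in $R$, hence in $R/\fn{rad}(R)$ — this is the remark appended to part (2). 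Since $\fn{rad}(R/\fn{rad}(R))=0$, Villamayor's theorem (Lemma \ref{l4.7}) applied to $R/\fn{rad}(R)$ and $\pi'$ gives $\fn{rad}\big((R/\fn{rad}(R))\pi'\big)=0$, so $R\pi/\langle I,\fn{rad}(R)\rangle$ is semiprimitive and $\fn{rad}(R\pi)\subseteq\langle I,\fn{rad}(R)\rangle$. This proves (1).

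Part (2) is then formal: since $I$ is nilpotent, $R\pi$ is $I$-complete in the sense of Definition \ref{d2.1} (the tower $R\pi/I^{\,n}$ stabilizes at $R\pi$), so Lemma \ref{l2.2} furnishes the one-to-one correspondence between isomorphism classes of finitely generated $R\pi$-projective modules and those of finitely generated $(R\pi/I)$-projective modules via $P\sto P/IP$; composing with the isomorphism $R\pi/I\simeq R\pi'$ of part (1) finishes the proof. The one delicate point is the radical computation in part (1) — the nilpotency of the augmentation ideal of $R\pi_p$ (which is exactly where the hypotheses $\fn{char}R=p$ and $\pi_p$ a $p$-group are used) and the correct application of Villamayor's lemma over $R/\fn{rad}(R)$, which forces one to verify that $[\pi:\pi_p]$ is genuinely invertible, not merely nonzero, in $R$. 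The rest is bookkeeping.
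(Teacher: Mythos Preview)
Your proof is correct and follows essentially the same route as the paper: both identify $I$ as the two-sided augmentation kernel via normality of $\pi_p$, prove nilpotency of $I$ through that of the augmentation ideal $J_p$ of $R\pi_p$, compute $\fn{rad}(R\pi)$ by applying Villamayor's theorem (Lemma \ref{l4.7}) to $(R/\fn{rad}(R))\pi'$, and then invoke $I$-completeness and Lemma \ref{l2.2} for part (2). The only cosmetic difference is that the paper obtains the nilpotency of $J_p$ by passing to the quotient field $K$ and citing the known fact that $\fn{rad}(K\pi_p)$ is the (nilpotent) augmentation ideal, whereas you give a direct induction on $|\pi_p|$ using a central element of order $p$; both arguments are standard and yield the same conclusion.
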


\begin{proof}
Step 1. For any $\sigma \in \pi$ and any $\lambda \in \pi_p$, $\sigma (\lambda-1) \sigma^{-1} \in I$, because $\pi_p$ is a normal subgroup of $\pi$. Thus $I$ is a two-sided ideal of $R\pi$. Clearly $R\pi/I \simeq R\pi'$.

Let $K$ be the quotient field of $R$. As in the proof of Theorem \ref{t3.2}, we find that $\fn{rad}(K\pi_p)\allowbreak =\sum_{\lambda\in\pi_p} K\cdot (\lambda-1)$. Since $\fn{rad}(K\pi_p)$ is nilpotent, so is the ideal $I_0:=\sum_{\lambda\in\pi_p} R \cdot (\lambda-1)$ in $R\pi_p$. It follows that $I= I_0 \cdot R\pi$ and $I^n=I_0^n \cdot R\pi$. Thus $I$ is nilpotent and is contained in $\fn{rad}(R\pi)$. Note that $|\pi'|$ is a unit in $R$.

Using the fact that $|\pi'|$ is a unit in $R$, we will show that $\fn{rad}(R\pi')=\fn{rad}(R) \cdot R\pi'$.
Because $\fn{rad}(R) \cdot R\pi' \subset \fn{rad}(R\pi')$, the fact that $\fn{rad}(R\pi')=\fn{rad}(R) \cdot R\pi'$ is equivalent to $\fn{rad}(R'\pi')=0$ where $R'=R/\fn{rad}(R)$. The latter assertion is true by Lemma \ref{l4.7}. Hence
$\fn{rad}(R\pi')=\fn{rad}(R) \cdot R\pi'$.

From $\fn{rad}(R\pi/I) \simeq \fn{rad}(R\pi')$ and $\fn{rad}(R\pi/I)=\fn{rad}(R\pi)/I$ \cite[page 55]{La2}, we find that $\fn{rad}(R\pi)=\langle I, \fn{rad}(R) \rangle$.

\medskip
Step 2. Since $I$ is nilpotent, $R\pi$ is $I$-complete. Apply Lemma \ref{l2.2} to get the one-to-one correspondence of finitely generated projective modules over $R\pi$ and $R\pi'$.

\end{proof}

\begin{remark}
Let the notations be the same as the above lemma. Assume furthermore that the group extension $1 \to \pi_p \to \pi \to \pi' \to 1$ splits. Then the composite of the imbedding $R\pi' \to R\pi$ and the canonical projection $R\pi \to R\pi'$ is the identity map on $R\pi'$. By the same idea of Theorem \ref{t3.13}, it can be shown that every finitely generated $R\pi$-projective module is of the form $R\pi \otimes_{R\pi'} P_0$ for some $R\pi'$-projective module $P_0$.
\end{remark}

\newpage
\renewcommand{\refname}{\centering{References}}

\end{document}